\documentclass[11pt]{article}

\usepackage{geometry}
\usepackage{latexsym}
\usepackage{mathrsfs}
\usepackage{pifont}
\usepackage{makecell}
\usepackage{amsfonts}
\usepackage{amssymb}
\usepackage{microtype}
\usepackage{subfigure}    
\usepackage{graphicx}
\usepackage{epstopdf}
\usepackage{float}
\usepackage{multirow}
\usepackage{bm}
\usepackage{amsmath}
\usepackage{amsthm}
\usepackage{color}
\usepackage[subnum]{cases}
\usepackage{rotating}
\usepackage{enumitem}
\usepackage{enumerate}
\usepackage{accents}
\usepackage{algorithm}
\usepackage{algorithmic}
\usepackage[title]{appendix}
\usepackage{mathtools}
\usepackage{caption}
\usepackage{url}
\usepackage{booktabs}
\usepackage{tcolorbox}
\usepackage{verbatim}

\makeatletter
\@addtoreset{equation}{section}
\makeatother

\definecolor{blue}{rgb}{0,0,0.9}
\definecolor{red}{rgb}{0.9,0,0}
\definecolor{green}{rgb}{0,0.9,0}
\definecolor{brown}{rgb}{0.6,0.1,0.1}
\definecolor{lightgreen}{rgb}{0.1,0.5,0.1}

\usepackage[colorlinks=true,
breaklinks=true,
bookmarks=true,
urlcolor=blue,
citecolor=lightgreen,
linkcolor=lightgreen,
bookmarksopen=false,
draft=false]{hyperref}

\graphicspath{{images/}}

\begin{document}

\newtheorem{property}{Property}[section]
\newtheorem{proposition}{Proposition}[section]
\newtheorem{append}{Appendix}[section]
\newtheorem{definition}{Definition}[section]
\newtheorem{lemma}{Lemma}[section]
\newtheorem{corollary}{Corollary}[section]
\newtheorem{theorem}{Theorem}[section]
\newtheorem{remark}{Remark}[section]
\newtheorem{problem}{Problem}[section]
\newtheorem{example}{Example}[section]
\newtheorem{assumption}{Assumption}
\renewcommand*{\theassumption}{\Alph{assumption}}


\title{Fast and Effective Computation of Generalized Symmetric Matrix Factorization}

\author{Lei Yang\thanks{School of Computer Science and Engineering, and Guangdong Province Key Laboratory of Computational Science, Sun Yat-sen University ({\tt yanglei39@mail.sysu.edu.cn}). 
},~~
Han Wan\thanks{School of Computer Science and Engineering, Sun Yat-sen University ({\tt wanh23@mail2.sysu.edu.cn}).},~~
Min Zhang\thanks{School of Mathematics and Information Science, Guangzhou University ({\tt zhangmin1206@gzhu.edu.cn}). 
},~~
Ling Liang\thanks{(Corresponding author) Department of Mathematics, The University of Tennessee, Knoxville ({\tt liang.ling@u.nus.edu}).}
}

\date{}

\maketitle

\begin{abstract}
In this paper, we study a nonconvex, nonsmooth, and non-Lipschitz generalized symmetric matrix factorization model that unifies a broad class of matrix factorization formulations arising in machine learning, image science, engineering, and related areas. We first establish two exactness properties. On the modeling side, we prove an exact penalty property showing that, under suitable conditions, the symmetry-inducing quadratic penalty enforces symmetry whenever the penalty parameter is sufficiently large but finite, thereby exactly recovering the associated symmetric formulation. On the algorithmic side, we introduce an auxiliary-variable splitting formulation and establish an exact relaxation relationship that rigorously links stationary points of the original objective function to those of a relaxed potential function. Building on these exactness properties, we propose an average-type nonmonotone alternating updating method (A-NAUM) based on the relaxed potential function. At each iteration, A-NAUM alternately updates the two factor blocks by (approximately) minimizing the potential function, while the auxiliary block is updated in closed form. To ensure the convergence and enhance practical performance, we further incorporate an average-type nonmonotone line search and show that it is well-defined under mild conditions. Moreover, based on the Kurdyka-{\L}ojasiewicz property and its associated exponent, we establish global convergence of the entire sequence to a stationary point and derive convergence rate results. Finally, numerical experiments on real datasets demonstrate the efficiency of A-NAUM.


\vspace{5mm}
\noindent {\bf Keywords:}~~Nonconvex Nonsmooth Optimization; Regularized Matrix Factorization; Symmetry; Nonmonotone Line Search; Alternating Updating; Kurdyka-{\L}ojasiewicz Property
\end{abstract}

\section{Introduction}\label{sec-intro}

Matrix factorization (MF) and its symmetric variants are central tools for learning low-dimensional and reduced-order latent representations from high-dimensional data. By combining low-rank structure with application-driven regularizers (e.g., nonnegativity, (group) sparsity, structured constraints), these models can encode prior structural information and promote interpretable latent factors \cite{cza2006new,g2011nonnegative,ls1999learning,sdbssmp2004nonnegative,uhzb2016generalized,zyzy2014low}. From an optimization perspective, factorized parameterizations are also essential in large-scale models, as they reduce computational and storage costs and enable scalable algorithmic implementations \cite{hswalwwc2022lora,jvz2014speeding,lltt2024accelerating,xxgcczcct2023qa}. Despite these advantages, algorithmic design and convergence analysis for MF models are typically tailored to specific formulations. In practice, however, it is often necessary to study families of MF models or to compare multiple regularization strategies within a single application. This consideration motivates the development of \textit{a unified model and a flexible optimization framework} that accommodates broad regularization structures while preserving scalability and admitting rigorous convergence guarantees. 

In this paper, we propose the following generalized symmetric matrix factorization problem:
\begin{equation}\label{quadpenaltypro}
\min_{X,\,Y}~~\mathcal{F}_{\lambda}(X,\,Y)
:=\Psi(X)+\Phi(Y)+\frac{1}{2}\|\mathcal{A}(XY^\top)-\bm{b}\|^2
+\frac{\lambda}{2}\|X-Y\|_F^2, \tag{GSMF}
\end{equation}
where $X=[\bm{x}_1,\ldots,\bm{x}_r]\in\mathbb{R}^{n\times r}$ and $Y=[\bm{y}_1,\ldots,\bm{y}_r]\in\mathbb{R}^{n\times r}$ are decision matrices with $r\leq n$; $\Psi,\Phi:\mathbb{R}^{n\times r}\to\mathbb{R}\cup\{\infty\}$ are proper closed functions that may be \emph{nonconvex, nonsmooth, and even non-Lipschitz}; $\bm{b}\in\mathbb{R}^q$ is a given vector with $q\leq n^2$; and $\mathcal{A}:\mathbb{R}^{n\times n}\to\mathbb{R}^q$ is a linear mapping with $\mathcal{A}\mathcal{A}^*$ being the identity map on $\mathbb{R}^q$. 
The penalty parameter $\lambda\geq0$ controls the extent to which symmetry is promoted by penalizing the discrepancy between $X$ and $Y$. Note that when $\lambda=0$, the two factor matrices need not have the same number of rows; however, for a unified presentation and without loss of generality, we assume throughout that $X$ and $Y$ share the same number of rows.

The model \eqref{quadpenaltypro} serves as a flexible formulation that subsumes a broad class of well-studied matrix factorization models. When $\lambda=0$, it reduces to regularized matrix factorization (RMF); with appropriate choices of $(\Psi,\Phi,\mathcal{A})$, this setting encompasses, for example, nonnegative matrix factorization (NMF) and matrix completion (MC) \cite{g2011nonnegative,ls1999learning,ls2000algorithms,nhd2012low,pt1994positive,slc2016scalable,slc2016tractable}. When $\lambda>0$ and $\Psi=\Phi$, the term $\tfrac{\lambda}{2}\|X-Y\|_F^2$ acts as a quadratic penalty that promotes $X\approx Y$, thereby encouraging symmetry in the factorization. In particular, as $\lambda\to\infty$, the classical penalty theory \cite[Section 17.1]{nw2006numerical} suggests that \eqref{quadpenaltypro} approaches the following symmetric regularized matrix factorization problem
\begin{equation}\label{SMFpro}
\min_{X\in\mathbb{R}^{n \times r}}
\; 2\Phi(X)+\frac{1}{2}\|\mathcal{A}(XX^\top)-\bm{b}\|^2, \tag{SRMF}
\end{equation}
in which the factor matrices are explicitly tied and the decision variable reduces to a single matrix $X$. Beyond its role as a modeling engine, symmetric matrix factorization also serves as an important computational primitive in matrix optimization. In particular, it arises naturally in low-rank approaches to semidefinite programming, most notably through the Burer-Monteiro factorization, where a positive semidefinite matrix variable $Z\succeq 0$ is parameterized as $Z=XX^\top$ and optimization is then carried out directly over the factor $X$; see, e.g., \cite{bm2003nonlinear,bm2005local,lltt2024accelerating,ylct2023inexact}.

Although the quadratic penalty term $\frac{\lambda}{2}\|X-Y\|_F^2$ in the model \eqref{quadpenaltypro} is intended to enforce $X=Y$ and thereby approximate the symmetric model \eqref{SMFpro}, it is not evident a priori that a \emph{finite} penalty parameter $\lambda$ suffices. To better justify \eqref{quadpenaltypro} as a practically meaningful surrogate for \eqref{SMFpro}, it is necessary to establish an \textit{exact penalty} property that goes beyond the classical quadratic penalty theory, namely, that for all sufficiently large but finite $\lambda$, every stationary point of \eqref{quadpenaltypro} satisfies $X=Y$ and corresponds to a stationary point of \eqref{SMFpro}. Establishing such an exactness property is essential for validating the practical relevance of the proposed general model, and constitutes the first main objective of this work.

On the algorithmic side, a dominant computational paradigm for various matrix factorization models is alternating optimization, which exploits the fact that the problem becomes significantly simpler when a certain part of the decision variables is fixed. This paradigm underlies many practical algorithms for NMF, MC and their variants, including multiplicative updates, alternating nonnegative least squares (ANLS), and hierarchical alternating least squares (HALS); see, e.g., \cite{cza2007hierarchical,gg2012accelerated,kp2008nonnegative,kp2008toward}. 
For the model \eqref{quadpenaltypro} with $\lambda=0$, Yang et al.~\cite{ypc2018nonmonotone} proposed a unified alternating framework based on a potential function combined with a nonmonotone line search. However, the convergence guarantees in~\cite{ypc2018nonmonotone} are limited to the subsequential convergence; \textit{neither} convergence of the entire sequence \textit{nor} convergence rates were established. Moreover recently, Li et al.~\cite{lzll2023provable} studied a penalized formulation of symmetric NMF, which corresponds to a special case of \eqref{quadpenaltypro} with $\mathcal{A}$ being the vectorization operator and $\Psi,\Phi$ being the indicator functions of $\mathbb{R}^{n\times r}_+$. By extending classical ANLS and HALS schemes for NMF, they proposed the SymANLS and SymHALS methods. While effective in practice, both the algorithmic design and theoretical analysis in~\cite{lzll2023provable} are tailored to this specific setting and do not directly extend to the more general formulation~\eqref{quadpenaltypro} with broader choices of $(\Psi,\Phi,\mathcal{A})$. 

Given the broad modeling scope of \eqref{quadpenaltypro}, there remains a clear need for an efficient, scalable, and globally convergent alternating optimization framework that applies to general choices of $(\Psi,\Phi,\mathcal{A},\lambda)$. Addressing this need constitutes the second main objective of this work. Despite the appeal of alternating updates in $(X,\,Y)$, the composite term $\mathcal{A}(XY^\top)$ can complicate the resulting subproblems, as the bilinear product $XY^\top$ is tightly coupled with the linear map $\mathcal{A}$. In particular, this coupling precludes the use of widely adopted and efficient hierarchical (column-wise) updating schemes (see \eqref{supro:U-hier} and \eqref{supro:V-hier}) in NMF. These difficulties motivate the development of splitting and relaxation strategies beyond ``direct" alternating minimization on $\mathcal{F}_\lambda$. To this end, we introduce an auxiliary variable $Z$ and consider the potential function
\begin{equation}\label{thetapofun}
\Theta_{\alpha,\beta,\lambda}(X,\,Y,\,Z)
:= \Psi(X)+\Phi(Y)+\frac{\alpha}{2}\|XY^\top-Z\|_F^2
+\frac{\beta}{2}\|\mathcal{A}(Z)-\bm{b}\|^2+\frac{\lambda}{2}\|X-Y\|_F^2,
\end{equation}
where $\alpha$ and $\beta$ are relaxation parameters, one of which is allowed to be \textit{negative}. This formulation can be interpreted as introducing the auxiliary constraint $Z=XY^\top$ and relaxing it via a quadratic penalty. By decoupling the bilinear product $XY^\top$ from the linear map $\mathcal{A}$, this relaxation yields much simpler subproblems in alternating schemes and, in particular, admits an explicit update for $Z$. Our algorithmic framework is therefore developed based on $\Theta_{\alpha,\beta,\lambda}$. Moreover, we incorporate an average-type nonmonotone line search to ensure convergence of the entire sequence and to further improve numerical performance.

The main contributions of this paper are summarized as follows:
\begin{itemize}
\item \textbf{Exact penalty and exact relaxation.} On the modeling side, we identify conditions under which the quadratic penalty in \eqref{quadpenaltypro} is \emph{exact}: for sufficiently large but finite $\lambda$, stationary points of \eqref{quadpenaltypro} satisfy $X=Y$ and thus recover stationary points of the symmetric model \eqref{SMFpro}; see Section~\ref{exact-penal}. Moreover, to facilitate the algorithmic development, we introduce the potential function $\Theta_{\alpha,\beta,\lambda}$ in \eqref{thetapofun} by decoupling $XY^\top$ from $\mathcal{A}$ through an auxiliary variable $Z$, and establish an \emph{exact relaxation} relationship between $\Theta_{\alpha,\beta,\lambda}$ and the original objective $\mathcal{F}_\lambda$ under mild conditions; see Section~\ref{exact-relax}.

\item \textbf{A flexible and globally convergent alternating optimization framework.} On the algorithmic side, we develop an average-type nonmonotone alternating updating method (A-NAUM) based on $\Theta_{\alpha,\beta,\lambda}$, which accommodates a variety of efficient block updating strategies for $X$ and $Y$ together with an explicit update for $Z$. This yields a flexible algorithmic framework applicable to broad choices of $(\Psi,\Phi,\mathcal{A},\lambda)$; see Section~\ref{sec-algo}. We further provide a comprehensive convergence analysis of A-NAUM in Section~\ref{sec-convanal}. In particular, despite the nonmonotone nature of the algorithm, we establish convergence of the entire sequence as well as convergence rate results under the Kurdyka--{\L}ojasiewicz property and its associated exponent.

\item \textbf{Numerical validation.} We conduct numerical experiments on real datasets for approximate symmetric nonnegative matrix factorization to demonstrate the efficiency of A-NAUM and its competitive performance against representative baselines; see Section~\ref{sec-numer}.

\end{itemize}

The rest of this paper is organized as follows. Section~\ref{sec-nota} introduces notation and preliminaries. Section~\ref{sec-exresult} establishes the exact penalty and exact relaxation properties. In Section~\ref{sec-algo}, we present A-NAUM for solving \eqref{quadpenaltypro}, followed by a comprehensive convergence analysis in Section~\ref{sec-convanal}. Numerical experiments are reported in Section~\ref{sec-numer}, and concluding remarks are given in Section~\ref{sec-conclu}.

\section{Notation and preliminaries}\label{sec-nota}

In this paper, scalars, vectors, and matrices are denoted by lowercase letters, bold lowercase letters, and uppercase letters, respectively. We use $\mathbb{N}$, $\mathbb{R}$ ($\mathbb{R}_+$), $\mathbb{R}^n$ ($\mathbb{R}^n_+$), and $\mathbb{R}^{m\times n}$ ($\mathbb{R}^{m\times n}_+$) to denote the sets of natural numbers, real (nonnegative) numbers, $n$-dimensional real (nonnegative) vectors, and $m\times n$ real (nonnegative) matrices, respectively. For a vector $\bm{x}\in\mathbb{R}^n$, $\bm{x}_i$ denotes its $i$th entry and $\|\bm{x}\|$ denotes its Euclidean norm. For a matrix $X\in\mathbb{R}^{m\times n}$, $X_{ij}$ denotes its $(i,j)$th entry and $\bm{x}_j$ denotes its $j$th column. We write $\mathrm{tr}(X)$ for the trace of $X$, $\|X\|_F:=\sqrt{\sum_{i=1}^m\sum_{j=1}^n X_{ij}^2}$ for the Frobenius norm, and $\|X\|$ for the spectral norm (i.e., the largest singular value). For matrices $X$ and $Y$ of the same size, $\langle X,\,Y\rangle:=\sum_{i=1}^m\sum_{j=1}^n X_{ij}Y_{ij}$ denotes the inner product, and $X\leq Y$ (resp., $X\geq Y$) means $X_{ij}\leq Y_{ij}$ (resp., $X_{ij}\geq Y_{ij}$) for all $(i,j)$. For a nonempty closed set $\mathcal{C}\subseteq\mathbb{R}^{m\times n}$, $\delta_{\mathcal{C}}$ denotes its indicator function, i.e., $\delta_{\mathcal{C}}(X)=0$ if $X\in\mathcal{C}$ and $\delta_{\mathcal{C}}(X)=+\infty$ otherwise. The distance from $X$ to $\mathcal{C}$ is denoted by $\operatorname{dist}(X,\,\mathcal{C}):=\inf_{Y\in\mathcal{C}}\|X-Y\|_F$.

For a linear map $\mathcal{A}:\mathbb{R}^{m\times n}\to\mathbb{R}^q$, $\mathcal{A}^*$ denotes its adjoint. We use $\texttt{vec}:\mathbb{R}^{m\times n}\to\mathbb{R}^{mn}$ to denote the vectorization map defined by $[\texttt{vec}(X)]_{i+(j-1)m}=X_{ij}$ for all $(i,j)$,
and $\texttt{mat}:\mathbb{R}^{mn}\to\mathbb{R}^{m\times n}$ to denote the corresponding matrixization map defined by $[\texttt{mat}(\bm{b})]_{ij}=\bm{b}_{i+(j-1)m}$ for all $(i,j)$.
It is straightforward to verify that $\texttt{vec}$ and $\texttt{mat}$ are inverse maps and adjoint to each other. Moreover, we use $\mathcal{I}_q$ and $\mathcal{I}$ to denote the identity maps on $\mathbb{R}^q$ and $\mathbb{R}^{n\times n}$, respectively. 

For an extended-real-valued function $h: \mathbb{R}^{m \times n} \to \mathbb{R} \cup \{ + \infty \}$, we say that it is \textit{proper} if $h(X) > - \infty$ for all $X \in \mathbb{R}^{m \times n}$ and its effective domain, denoted as ${\rm dom}\,h := \{ X \in \mathbb{R}^{m \times n} : h(X) <  \infty \}$, is nonempty. Moreover, $h$ is said to be closed if it is lower semicontinuous, i.e., $h(X) \leq \liminf\limits_{k\to\infty}h(X^k)$ for every sequence $X^k\to X$ as $k\to \infty$. We also use the notation $Y \xrightarrow{h} X$ to denote $Y \rightarrow X$ (i.e., $\|Y-X\|_F\to0$) and $h(Y) \rightarrow h(X)$. The (limiting) subdifferential \cite[Definition~8.3]{rw1998variational} of $h$ at $X\in \mathrm{dom}\,h$, denoted by $\partial h(X)$, is defined as
\begin{equation*}
\partial h(X):=\left\{ D \in \mathbb{R}^{m\times n}: \exists\,X^k \xrightarrow{h} X~\mathrm{and}~D^k \rightarrow D ~\mathrm{with}~D^k \in \widehat{\partial} h(X^k) ~\mathrm{for~all}~k\right\},
\end{equation*}
where $\widehat{\partial} h(\widetilde{Y})$ denotes the Fr\'{e}chet subdifferential of $h$ at $\widetilde{Y}\in \mathrm{dom}\,h$, which is the set of all $D \in \mathbb{R}^{m\times n}$ satisfying
\begin{equation*}
\liminf\limits_{Y \neq \widetilde{Y}, \,Y \rightarrow \widetilde{Y}} \frac{h(Y)-h(\widetilde{Y})-\langle D, \,Y-\widetilde{Y}\rangle}{\|Y-\widetilde{Y}\|_F} \geq 0.
\end{equation*}
From the above definition, \cite[Proposition~8.7]{rw1998variational} indicates that
\begin{equation}\label{robust}
\left\{ D\in\mathbb{R}^{m\times n}: \exists\, X^k \xrightarrow{h} X,~ D^k \rightarrow D~\mathrm{with}~D^k \in \partial h(X^k)~\mathrm{for~all}~k \right\} \subseteq \partial h(X).
\end{equation}
When $h$ is continuously differentiable or convex, the above subdifferential coincides with the classical concept of gradient or convex subdifferential of $h$, respectively; see, e.g., \cite[Exercise~8.8]{rw1998variational} and \cite[Proposition~8.12]{rw1998variational}. For $\kappa \geq 0$, a proper closed function $h$ is said to be $\kappa$-weakly convex if $h+\frac{\kappa}{2}\|\cdot\|_F^2$ is convex.

For a proper closed function $h: \mathbb{R}^{m \times n}\to\mathbb{R}\cup\{+\infty\}$,  the generalized Fermat's rule \cite[Theorem 10.1]{rw1998variational} states that, if $h$ attains a local minimum at $\widetilde{X}$, then $0 \in \partial h(\widetilde{X})$. Accordingly, throughout this paper, we say that $\widetilde{X}$ is a \textit{stationary point} of the function $h: \mathbb{R}^{m \times n} \to \mathbb{R}\cup\{+\infty\}$, or its associated minimization problem $\min\limits_{X\in\mathbb{R}^{m\times n}}\left\{h(X)\right\}$ if $0 \in \partial h(\widetilde{X})$.

We next recall the Kurdyka-{\L}ojasiewicz (KL) property (see \cite{abs2013convergence,bdl2007the,bst2014proximal,lp2017calculus} for more details), which is a widely used condition for establishing the convergence of the whole sequence.
For simplicity, let $\Phi_{\nu}$ ($\nu>0$) denote a class of concave functions $\varphi:[0,\nu) \rightarrow \mathbb{R}_{+}$ satisfying: (i) $\varphi(0)=0$; (ii) $\varphi$ is continuously differentiable on $(0,\nu)$ and continuous at $0$; (iii) $\varphi'(t)>0$ for all $t\in(0,\nu)$. Then, the KL property and its associated exponent can be described as follows.

\begin{definition}[\textbf{KL property and exponent}]\label{property-KL}
A proper closed function $h: \mathbb{R}^{m\times n} \rightarrow \mathbb{R} \cup \{+\infty\}$ is said to satisfy the KL property at $\widetilde{X}\in{\rm dom}\,\partial h:=\{X \in \mathbb{R}^{m\times n}: \partial h(X) \neq \emptyset\}$, if there exist a $\nu\in(0, +\infty]$, a neighborhood $\mathcal{V}$ of $\widetilde{X}$, and a function $\varphi \in \Phi_{\nu}$ such that for all $X \in \mathcal{V} \cap \{X\in\mathbb{R}^{m\times n} : h(\widetilde{X})<h(X)<h(\widetilde{X})+\nu\}$, it holds that
\begin{equation*}
\varphi'(h(X)-h(\widetilde{X}))\,\operatorname{dist}(0, \,\partial h(X)) \geq 1.
\end{equation*}
The function $h$ is called a KL function, if $h$ satisfies the KL property at each point of ${\rm dom}\,\partial h$. Furthermore, the function $h$ is said to be a KL function with an exponent $\theta$ if $\varphi$ can be chosen as $\varphi(t)=\tilde{a}t^{1-\theta}$ for some $\tilde{a} > 0$ and $\theta\in[0,1)$.
\end{definition}

The uniformized KL property, which was established in \cite[Lemma 6]{bst2014proximal} is stated as follows.

\begin{proposition}[\textbf{Uniformized KL property}]\label{uniKL}
Suppose that $h: \mathbb{R}^{m\times n} \rightarrow \mathbb{R} \cup \{+\infty\}$ is a proper closed function and $\Gamma$ is a compact set. If $h \equiv \zeta$ on $\Gamma$ for some constant $\zeta$ and satisfies the KL property at each point of $\Gamma$, then there exist $\varepsilon>0$, $\nu>0$, and $\varphi \in \Phi_{\nu}$ such that
\begin{equation*}
\varphi'(h(X) - \zeta)\operatorname{dist}(0, \,\partial h(X)) \geq 1
\end{equation*}
for all $X \in \{X\in\mathbb{R}^{m\times n}: \operatorname{dist}(X,\,\Gamma)<\varepsilon\} \cap \{X\in \mathbb{R}^{m\times n} : \zeta < h(X) < \zeta + \nu\}$.
\end{proposition}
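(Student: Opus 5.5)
The plan is the standard covering-and-patching argument from \cite[Lemma 6]{bst2014proximal}. Since $\Gamma$ is compact and $h\equiv\zeta$ on $\Gamma$, every $\bar X\in\Gamma$ lies in ${\rm dom}\,\partial h$ by hypothesis. So Definition~\ref{property-KL} applies at each $\bar X\in\Gamma$. It provides $\nu_{\bar X}>0$, an open ball $B(\bar X,\varepsilon_{\bar X})$ and $\varphi_{\bar X}\in\Phi_{\nu_{\bar X}}$ such that
\[
\varphi_{\bar X}'(h(X)-\zeta)\,\operatorname{dist}(0,\partial h(X))\ge 1
\]
for all $X\in B(\bar X,\varepsilon_{\bar X})$ with $\zeta<h(X)<\zeta+\nu_{\bar X}$. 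Here we use $h(\bar X)=\zeta$.

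The balls $B(\bar X,\varepsilon_{\bar X}/2)$ cover $\Gamma$, so by compactness finitely many of them, centred at $\bar X_1,\dots,\bar X_p$, already cover $\Gamma$. We then set
\[
\varepsilon:=\min_i \varepsilon_{\bar X_i}/2,\qquad \nu:=\min_i \nu_{\bar X_i}.
\]
Take any $X$ with $\operatorname{dist}(X,\Gamma)<\varepsilon$. Pick $Y\in\Gamma$ with $\|X-Y\|_F<\varepsilon$, and an index $i$ with $Y\in B(\bar X_i,\varepsilon_{\bar X_i}/2)$. By the triangle inequality $X\in B(\bar X_i,\varepsilon_{\bar X_i})$. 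If in addition $\zeta<h(X)<\zeta+\nu$, then the KL inequality for $\varphi_i:=\varphi_{\bar X_i}$ holds at $X$.

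The remaining step is to build one function $\varphi\in\Phi_\nu$ that works for every $i$ at once. Define
\[
\varphi(t):=\sum_{i=1}^p \varphi_i(t),\qquad t\in[0,\nu).
\]
It is concave as a sum of concave functions, satisfies $\varphi(0)=0$, is $C^1$ on $(0,\nu)$ and continuous at $0$, and has positive derivative. So $\varphi\in\Phi_\nu$; the restriction of each $\varphi_i$ to $[0,\nu)$ is valid because $\nu\le\nu_{\bar X_i}$.

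Moreover $\varphi'(t)=\sum_j\varphi_j'(t)\ge\varphi_i'(t)$ for every $i$, since each $\varphi_j'>0$. Hence
\[
\varphi'(h(X)-\zeta)\,\operatorname{dist}(0,\partial h(X))\ge \varphi_i'(h(X)-\zeta)\,\operatorname{dist}(0,\partial h(X))\ge 1,
\]
which is the claim.

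The main point needing care is this construction of a single common desingularizing function. Taking a pointwise maximum of the $\varphi_i$ would fail, because the maximum need not be $C^1$ or concave. Summing avoids this, since it preserves concavity and smoothness and gives derivative domination. The remaining steps (covering, the triangle inequality) are routine.
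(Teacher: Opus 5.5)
Your proof is correct and is essentially the argument of \cite[Lemma~6]{bst2014proximal}, which the paper cites rather than proves: cover $\Gamma$ by finitely many KL balls, take $\nu$ as the minimum of the local $\nu_{\bar X_i}$, and set $\varphi:=\sum_i\varphi_i$ so that $\varphi'\geq\varphi_i'$ for every $i$. The only cosmetic difference is how $\varepsilon$ is obtained: you get it explicitly from the half-radius covering, whereas that proof simply chooses $\varepsilon$ small enough that the $\varepsilon$-neighbourhood of $\Gamma$ lies inside the union of the balls.
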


\section{Exact penalty and exact relaxation}\label{sec-exresult}

In this section, we establish two key connections: (i) the relationship between the penalized formulation \eqref{quadpenaltypro} and the symmetric model \eqref{SMFpro}; and (ii) the relationship between the original objective $\mathcal{F}_{\lambda}$ and the relaxed potential function $\Theta_{\alpha,\beta,\lambda}$ in \eqref{thetapofun}. The first result formalizes how the quadratic penalty in \eqref{quadpenaltypro} can enforce $X=Y$ and thereby recover a symmetric factorization, while the second provides the theoretical justification for optimizing the relaxed formulation and serves as the foundation for our subsequent algorithmic development.

\subsection{Exact penalty results}\label{exact-penal}

Motivated by \cite[Section 2.2]{lzll2023provable}, we show that, although \eqref{quadpenaltypro} is formulated as a \textit{quadratic} penalty relaxation of \eqref{SMFpro}, it nevertheless admits an exact penalty property under suitable conditions, which goes beyond the scope of classical results in, e.g., \cite[Section 17.1]{nw2006numerical}.

\begin{theorem}\label{thm:2equal1}
Suppose that $\Psi=\Phi$ and $\Phi$ is $\kappa$-weakly convex, and that the linear map $\mathcal{A}:\mathbb{R}^{n\times n}\to\mathbb{R}^q$ and the vector $\bm b\in\mathbb{R}^q$ satisfy: {\rm(i)} $\mathcal{A}^*\bm b$ is symmetric; and {\rm(ii)} for any $U\in\mathbb{R}^{n\times n}$,
\begin{equation*}
\mathcal{A}^*\mathcal{A}(U)-\big(\mathcal{A}^*\mathcal{A}(U)\big)^\top
~=~\mathcal{A}^*\mathcal{A}\big(U-U^\top\big).
\end{equation*}
Let $(\widetilde X,\widetilde Y)$ be an arbitrary stationary point of \eqref{quadpenaltypro}. If the penalty parameter $\lambda$ satisfies
\begin{equation*}
2\lambda>\big\|\mathcal{A}^*\mathcal{A}(\widetilde X\widetilde Y^\top)\big\|+\kappa
-\rho_{\min}(\mathcal{A}^*\bm b),
\end{equation*}
where $\rho_{\min}(\mathcal{A}^*\bm b)$ denotes the smallest eigenvalue of $\mathcal{A}^*\bm b$, then $\widetilde X=\widetilde Y$, and $\widetilde X$ is a stationary point of \eqref{SMFpro}.
\end{theorem}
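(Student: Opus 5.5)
Write $G:=\mathcal{A}^*\big(\mathcal{A}(\widetilde X\widetilde Y^\top)-\bm b\big)=\mathcal{A}^*\mathcal{A}(\widetilde X\widetilde Y^\top)-\mathcal{A}^*\bm b$. The plan is to subtract the two stationarity conditions and use weak convexity of $\Phi$ to extract a coercive inequality in $\widetilde X-\widetilde Y$.

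\textbf{Step 1: stationarity conditions.} Since the coupling term is smooth, the subdifferential sum rule (\cite[Exercise~8.8]{rw1998variational}) together with the separable structure gives $0\in\partial\mathcal{F}_\lambda(\widetilde X,\widetilde Y)$ if and only if
\begin{equation*}
-G\widetilde Y-\lambda(\widetilde X-\widetilde Y)\in\partial\Phi(\widetilde X),\qquad
-G^\top\widetilde X+\lambda(\widetilde X-\widetilde Y)\in\partial\Phi(\widetilde Y).
\end{equation*}

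\textbf{Step 2: weak convexity.} Because $\Phi$ is $\kappa$-weakly convex, $\partial\Phi+\kappa I$ is monotone. Pair the two inclusions against $\widetilde X-\widetilde Y$:
\begin{equation*}
\big\langle -G\widetilde Y+G^\top\widetilde X-2\lambda(\widetilde X-\widetilde Y),\,\widetilde X-\widetilde Y\big\rangle\ \ge\ -\kappa\|\widetilde X-\widetilde Y\|_F^2 .
\end{equation*}

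\textbf{Step 3: decompose $G$.} This is the key step. Set $D:=\widetilde X-\widetilde Y$, $M:=\mathcal{A}^*\mathcal{A}(\widetilde X\widetilde Y^\top)$ and $B:=\mathcal{A}^*\bm b$, which is symmetric by (i). Then
\begin{equation*}
-G\widetilde Y+G^\top\widetilde X=-M\widetilde Y+M^\top\widetilde X+B\widetilde Y-B\widetilde X .
\end{equation*}
\begin{itemize}
\item The $B$-terms contribute $-\langle BD,D\rangle\le-\rho_{\min}(B)\|D\|_F^2$.
\item For the $M$-terms, write $-M\widetilde Y+M^\top\widetilde X=M^\top D+(M^\top-M)\widetilde Y$. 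The first piece gives $\langle M^\top D,D\rangle\le\|M\|\,\|D\|_F^2$.
\item By (ii), $M^\top-M=-\mathcal{A}^*\mathcal{A}(\widetilde X\widetilde Y^\top-\widetilde Y\widetilde X^\top)$. So the cross term is $-\langle\mathcal{A}^*\mathcal{A}(\widetilde X\widetilde Y^\top-\widetilde Y\widetilde X^\top),\,D\widetilde Y^\top\rangle$.
\end{itemize}

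\textbf{Main obstacle: the sign of the cross term.} I would aim to show it is $\le0$, possibly after symmetrizing or swapping the roles of $\widetilde X$ and $\widetilde Y$ (averaging the two ways of splitting). The tool is the identity
\begin{equation*}
\widetilde X\widetilde Y^\top-\widetilde Y\widetilde X^\top=D\widetilde Y^\top-\widetilde YD^\top=S-S^\top,\qquad S:=D\widetilde Y^\top .
\end{equation*}
By (ii), $\mathcal{A}^*\mathcal{A}(S-S^\top)$ is skew, so pairing it with $S$ equals half its pairing with $S-S^\top$. This gives
\begin{equation*}
\tfrac12\|\mathcal{A}(S-S^\top)\|^2\ge0,
\end{equation*}
and with the minus sign the cross term is nonpositive. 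I expect this bookkeeping of signs and the use of (ii) to be the delicate part.

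\textbf{Step 4: conclusion.} Collecting the bounds gives
\begin{equation*}
(2\lambda-\kappa-\|M\|+\rho_{\min}(B))\|D\|_F^2\le0 .
\end{equation*}
Under the stated condition on $\lambda$ the coefficient is positive, so $D=0$, i.e., $\widetilde X=\widetilde Y$. Substituting this into the first inclusion yields $-G\widetilde X\in\partial\Phi(\widetilde X)$. Here $G$ is symmetric, since $\mathcal{A}^*\mathcal{A}(\widetilde X\widetilde X^\top)$ is symmetric by (ii) applied to a symmetric $U$, and $B$ is symmetric by (i). Hence $2G\widetilde X$ is the gradient of $\tfrac12\|\mathcal{A}(XX^\top)-\bm b\|^2$ at $\widetilde X$. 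Multiplying the inclusion by $2$ and using $\partial(2\Phi)=2\partial\Phi$ gives $0\in\partial(2\Phi)(\widetilde X)+2G\widetilde X$, which is stationarity for \eqref{SMFpro}.
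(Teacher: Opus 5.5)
Your proof is correct and follows essentially the same route as the paper: pair the difference of the two stationarity inclusions with $\widetilde X-\widetilde Y$, use monotonicity of $\partial\Phi+\kappa I$, bound the $\mathcal{A}^*\bm b$ term via $\rho_{\min}$, and use condition (ii) to show the cross term equals $-\tfrac12\|\mathcal{A}(\widetilde X\widetilde Y^\top-\widetilde Y\widetilde X^\top)\|^2\le 0$. The paper splits $\mathcal{A}^*\mathcal{A}(\widetilde X\widetilde Y^\top)$ into symmetric and skew parts rather than writing $M^\top D+(M^\top-M)\widetilde Y$, but this is the same computation; your identity with $S=D\widetilde Y^\top$ already settles the sign, so no symmetrizing is needed, and your final check that $G$ is symmetric (tracking the factor $2$ in $2\Phi$) is slightly more careful than the paper's one-line conclusion.
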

\begin{proof}
See Appendix \ref{proof:2equal1}.
\end{proof}

From Theorem \ref{thm:2equal1}, we see that any stationary point of \eqref{quadpenaltypro} is also a stationary point of \eqref{SMFpro}, provided that the penalty parameter $\lambda > 0$ is sufficiently large but finite. This shows that the quadratic penalty model \eqref{quadpenaltypro} can enforce $\widetilde X=\widetilde Y$ at stationarity, thereby effectively inducing symmetry between the factor matrices. Moreover, this result extends \cite[Theorem~1]{lzll2023provable}, which was established only for symmetric nonnegative matrix factorization, to a substantially broader class of regularized matrix factorization models. Finally, the conditions imposed on $(\mathcal{A},\bm{b})$ can be satisfied, for example, when $\mathcal{A}$ is the vectorization map from $\mathbb{R}^{n\times n}$ to $\mathbb{R}^{n^2}$ and $\texttt{mat}(\bm{b})$ (i.e., the matrixization of $\bm{b}$) is symmetric. In addition, as shown in the following proposition, these conditions can also be satisfied when $\mathcal{A}=\mathcal{P}_\Omega$ is a symmetric sampling map and $\mathcal{P}_\Omega^*(\bm{b})$ is symmetric, where $\mathcal{P}_\Omega^*$ denotes the adjoint map of $\mathcal{P}_\Omega$.


\begin{proposition}\label{prop-samplingmap}
Let $q\leq n^2$ and let $\Omega:=\{(i_1,j_1),(i_2,j_2),\ldots,(i_q,j_q)\}\subseteq \{1,\ldots,n\}\times\{1,\ldots,n\} $ satisfy: {\rm(i)} the elements of $\Omega$ are ordered lexicographically, with priority given first to the column index and then to the row index; and {\rm(ii)} $\Omega$ is symmetric in the sense that $(i,j)\in\Omega$ if and only if $(j,i)\in\Omega$. Define the sampling map $\mathcal{P}_\Omega:\mathbb{R}^{n\times n}\to\mathbb{R}^q$ by
\begin{equation*}
\mathcal{P}_\Omega(U):=(U_{i_1j_1},\ldots,U_{i_qj_q})^\top,
\qquad\forall\,U\in\mathbb{R}^{n\times n}.
\end{equation*}
Then, for any $U\in\mathbb{R}^{n\times n}$, $\mathcal{P}_\Omega^*\mathcal{P}_\Omega(U)-\big(\mathcal{P}_\Omega^*\mathcal{P}_\Omega(U)\big)^\top
~=~\mathcal{P}_\Omega^*\mathcal{P}_\Omega(U-U^\top)$.  
\end{proposition}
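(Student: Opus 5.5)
The plan is to compute $\mathcal{P}_\Omega^*\mathcal{P}_\Omega$ explicitly and then use the symmetry of $\Omega$. We write $\Omega$ as a set of index pairs; the ordering in condition (i) only fixes which coordinate of $\mathbb{R}^q$ corresponds to which pair, so it does not affect the composite map $\mathcal{P}_\Omega^*\mathcal{P}_\Omega$.

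First we identify the adjoint. For $\bm{v}\in\mathbb{R}^q$ we have $\langle \mathcal{P}_\Omega(U),\bm v\rangle=\sum_{k=1}^q U_{i_kj_k}\bm v_k$. Hence $\mathcal{P}_\Omega^*(\bm v)$ is the $n\times n$ matrix whose $(i_k,j_k)$ entry is $\bm v_k$ and whose other entries are zero. The pairs in $\Omega$ are distinct, so there are no collisions. It follows that $\mathcal{P}_\Omega^*\mathcal{P}_\Omega(U)=\mathcal{M}_\Omega\circ U$, where $\mathcal{M}_\Omega\in\{0,1\}^{n\times n}$ is the indicator mask of $\Omega$ and $\circ$ is the Hadamard product. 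In other words, $\mathcal{P}_\Omega^*\mathcal{P}_\Omega$ is the entrywise projection onto the entries indexed by $\Omega$.

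Next we use condition (ii). Symmetry of $\Omega$ means exactly $\mathcal{M}_\Omega^\top=\mathcal{M}_\Omega$. Therefore
\[
(\mathcal{M}_\Omega\circ U)^\top=\mathcal{M}_\Omega^\top\circ U^\top=\mathcal{M}_\Omega\circ U^\top .
\]
Subtracting and using linearity of the Hadamard product gives
\[
\mathcal{P}_\Omega^*\mathcal{P}_\Omega(U)-\big(\mathcal{P}_\Omega^*\mathcal{P}_\Omega(U)\big)^\top=\mathcal{M}_\Omega\circ(U-U^\top)=\mathcal{P}_\Omega^*\mathcal{P}_\Omega(U-U^\top),
\]
which is the claimed identity.

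The only step needing care is the adjoint computation: we must check that each index pair appears once, so that $\mathcal{P}_\Omega^*\mathcal{P}_\Omega$ is a $0/1$ mask rather than having multiplicities. This holds because $\Omega$ is a set. With that in place, the remaining steps are routine. The same computation shows $\mathcal{P}_\Omega\mathcal{P}_\Omega^*=\mathcal{I}_q$, consistent with the standing assumption on $\mathcal{A}$.
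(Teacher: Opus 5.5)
Your proof is correct and takes essentially the same route as the paper. Both identify $\mathcal{P}_\Omega^*\mathcal{P}_\Omega$ as the entrywise restriction to the entries indexed by $\Omega$ and then use the symmetry of $\Omega$ to pass the transpose through. The paper checks this entry by entry, while you write it as a Hadamard product with the symmetric $0/1$ mask, which is the same argument in more compact notation.
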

\begin{proof}
See Appendix \ref{proof:prop-samplingmap}.
\end{proof}

\subsection{Exact relaxation results}\label{exact-relax}

In this subsection, we establish precise relationships between the objective function $\mathcal{F}_{\lambda}$ and its relaxed potential function $\Theta_{\alpha,\beta,\lambda}$. Specifically, under suitable conditions, we show that problem \eqref{quadpenaltypro} is exactly equivalent to minimizing $\Theta_{\alpha,\beta,\lambda}$. When these conditions are further relaxed, this equivalence weakens to a correspondence between stationary points of $\mathcal{F}_{\lambda}$ and $\Theta_{\alpha,\beta,\lambda}$. Our analysis follows arguments similar to those in \cite[Section 3]{ypc2018nonmonotone}. For completeness, detailed proofs of the main results are provided in Appendices \ref{proof:equal-min} and \ref{proof:equal-sta}. 

\begin{theorem}\label{thm:equal-min}
Suppose that $\mathcal{A}\mathcal{A}^* = \mathcal{I}_q$. If $\alpha$ and $\beta$ are chosen such that $\alpha \mathcal{I} + \beta \mathcal{A}^* \mathcal{A} \succ 0$ and $\frac{1}{\alpha} + \frac{1}{\beta} = 1$, then problem $\min_{X,Y,Z}\big\{\Theta_{\alpha,\beta,\lambda}(X,\,Y,\,Z)\big\}$ is equivalent to problem \eqref{quadpenaltypro}.
\end{theorem}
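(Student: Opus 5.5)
The plan is to minimize $\Theta_{\alpha,\beta,\lambda}$ over $Z$ first, with $(X,Y)$ fixed, and show that the partial minimum value equals $\mathcal{F}_\lambda(X,Y)$ exactly. Once this is established, $\inf_{X,Y,Z}\Theta=\inf_{X,Y}\mathcal{F}_\lambda$. Moreover, $(X^*,Y^*,Z^*)$ is a minimizer of $\Theta$ if and only if $(X^*,Y^*)$ minimizes $\mathcal{F}_\lambda$ and $Z^*$ is the (unique) $Z$-minimizer at $(X^*,Y^*)$. That is precisely the claimed equivalence.

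\textbf{Step 1 (the $Z$-subproblem).} Fix $(X,Y)$ and write $W:=XY^\top$. The function
\[
Z\mapsto \tfrac{\alpha}{2}\|W-Z\|_F^2+\tfrac{\beta}{2}\|\mathcal{A}(Z)-\bm b\|^2
\]
is a quadratic whose Hessian is $\alpha\mathcal{I}+\beta\mathcal{A}^*\mathcal{A}\succ0$. It is therefore strongly convex, and its unique minimizer solves
\[
(\alpha\mathcal{I}+\beta\mathcal{A}^*\mathcal{A})Z=\alpha W+\beta\mathcal{A}^*\bm b.
\]

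\textbf{Step 2 (explicit inverse).} Since $\mathcal{A}\mathcal{A}^*=\mathcal{I}_q$, the map $P:=\mathcal{A}^*\mathcal{A}$ is an orthogonal projection. Consequently $(\alpha\mathcal{I}+\beta P)^{-1}=\frac{1}{\alpha}(\mathcal{I}-P)+\frac{1}{\alpha+\beta}P$. The condition $\frac1\alpha+\frac1\beta=1$ gives $\alpha+\beta=\alpha\beta$, so $\frac{\beta}{\alpha+\beta}=\frac1\alpha$ and $\frac{\alpha}{\alpha+\beta}=\frac1\beta$. Substituting these, I expect
\[
Z^*=W+\tfrac{1}{\alpha}\mathcal{A}^*(\bm b-\mathcal{A}(W)).
\]
This is checked by applying $\alpha\mathcal{I}+\beta P$ to it and using $P\mathcal{A}^*=\mathcal{A}^*$.

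\textbf{Step 3 (value at the minimizer).} Compute the two terms at $Z^*$:
\begin{itemize}
\item $W-Z^*=-\frac1\alpha\mathcal{A}^*(\bm b-\mathcal{A}W)$, so $\frac{\alpha}{2}\|W-Z^*\|_F^2=\frac{1}{2\alpha}\|\mathcal{A}W-\bm b\|^2$, using $\|\mathcal{A}^*v\|=\|v\|$.
\item $\mathcal{A}Z^*-\bm b=(1-\frac1\alpha)(\mathcal{A}W-\bm b)=\frac1\beta(\mathcal{A}W-\bm b)$, so $\frac\beta2\|\mathcal{A}Z^*-\bm b\|^2=\frac{1}{2\beta}\|\mathcal{A}W-\bm b\|^2$.
\end{itemize}
The sum is $\frac12(\frac1\alpha+\frac1\beta)\|\mathcal{A}(XY^\top)-\bm b\|^2=\frac12\|\mathcal{A}(XY^\top)-\bm b\|^2$. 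Hence $\min_Z\Theta_{\alpha,\beta,\lambda}(X,Y,Z)=\mathcal{F}_\lambda(X,Y)$ for every $(X,Y)$, including points where $\Psi$ or $\Phi$ is infinite, since those terms do not involve $Z$.

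\textbf{Step 4 (conclusion).} The equivalence of optimal values and of minimizers then follows by the standard partial-minimization argument. In one direction, $\Theta(X,Y,Z)\ge\mathcal{F}_\lambda(X,Y)$ for all $Z$. In the other, $\Theta(X,Y,Z^*(X,Y))=\mathcal{F}_\lambda(X,Y)$. Both directions handle local minimizers too, since $Z^*$ depends continuously on $(X,Y)$.

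The main obstacle is Step 2: care is needed because one of $\alpha,\beta$ may be negative. In that case I must rely on the positive-definiteness assumption, not on the signs of $\alpha$ and $\beta$, to guarantee that the minimizer exists and is unique. This assumption amounts to $\alpha>0$ and $\alpha+\beta>0$ on the range of $P$ (or $\alpha+\beta>0$ alone if $P=\mathcal{I}$), and it is exactly what justifies the inverse formula and hence Step 3.
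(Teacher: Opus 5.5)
Your proposal is correct and follows the paper's proof. You minimize over $Z$ using strong convexity of $Z\mapsto\Theta_{\alpha,\beta,\lambda}(X,Y,Z)$, invert $\alpha\mathcal{I}+\beta\mathcal{A}^*\mathcal{A}$ explicitly (your formula coincides with Lemma~\ref{prop:inverse}), and conclude from $\min_Z\Theta_{\alpha,\beta,\lambda}(X,Y,Z)=\mathcal{F}_\lambda(X,Y)$, where your Step~3 supplies the computation the paper leaves as the omitted Lemma~\ref{lem:f=theta}. One small remark: the eigenvalue $\alpha+\beta=\alpha\beta$ must be positive, so under these hypotheses both $\alpha$ and $\beta$ exceed $1$, and the sign concern in your last paragraph never actually arises.
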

\begin{proof}
See Appendix \ref{proof:equal-min}.
\end{proof}

Theorem~\ref{thm:equal-min} shows that problem \eqref{quadpenaltypro} is equivalent to minimizing $\Theta_{\alpha,\beta,\lambda}$ when $\mathcal{A}\mathcal{A}^*=\mathcal{I}_q$ and $\alpha,\beta$ are chosen appropriately. Furthermore, the following theorem establishes that, under more relaxed conditions on $\alpha$ and $\beta$, a rigorous correspondence between stationary points of the original objective $\mathcal{F}_{\lambda}$ and those of the potential function $\Theta_{\alpha,\beta,\lambda}$ can still be obtained.

\begin{theorem}\label{thm:equal-sta}
Suppose that $\mathcal{A}\mathcal{A}^*=\mathcal{I}_{q}$ and $\alpha$, $\beta$ are chosen such that $\frac{1}{\alpha}+\frac{1}{\beta}=1$. Then, the following statements hold.
\begin{itemize}
\item [{\rm(i)}] If $(\widetilde{X}, \,\widetilde{Y}, \,\widetilde{Z})$ is a stationary point of $\Theta_{\alpha,\beta,\lambda}$, then $(\widetilde{X}, \,\widetilde{Y})$ is a stationary point of $\mathcal{F}_{\lambda}$.

\item [{\rm(ii)}] If $(\widetilde{X}, \,\widetilde{Y})$ is a stationary point of $\mathcal{F}_{\lambda}$, then $(\widetilde{X}, \,\widetilde{Y}, \,\widetilde{Z})$ is a stationary point of $\Theta_{\alpha,\beta,\lambda}$, where $\widetilde{Z}$ is given by
    \begin{equation}\label{eq:Zbar=}
    \widetilde{Z} = \textstyle\left(\mathcal{I} - \frac{\beta}{\alpha+\beta}\mathcal{A}^*\mathcal{A}\right)(\widetilde{X}\widetilde{Y}^{\top}) + \frac{\beta}{\alpha+\beta}\mathcal{A}^*(\bm{b}).
    \end{equation}
\end{itemize}
\end{theorem}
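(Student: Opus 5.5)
The plan is to compute the limiting subdifferentials of both functions with the separable sum rule, and then match the resulting stationarity systems through the optimality condition in $Z$. Write $G(X,Y):=\frac12\|\mathcal{A}(XY^\top)-\bm b\|^2$ and let $H(X,Y,Z)$ denote the smooth part $\frac{\alpha}{2}\|XY^\top-Z\|_F^2+\frac{\beta}{2}\|\mathcal{A}(Z)-\bm b\|^2+\frac{\lambda}{2}\|X-Y\|_F^2$. Since $(X,Y)\mapsto\Psi(X)+\Phi(Y)$ is block separable and proper closed, and the remaining terms are $C^1$, we have
\[
\partial\mathcal{F}_\lambda(X,Y)=\partial\Psi(X)\times\partial\Phi(Y)+\nabla\big(G+\tfrac{\lambda}{2}\|X-Y\|_F^2\big)(X,Y),\qquad \partial\Theta_{\alpha,\beta,\lambda}=\partial\Psi(X)\times\partial\Phi(Y)\times\{0\}+\nabla H .
\]
With $W:=\mathcal{A}^*(\mathcal{A}(XY^\top)-\bm b)$, the gradient of $G$ is $(WY,\,W^\top X)$. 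For $\Theta$, with $R:=\alpha(XY^\top-Z)$, the $X$- and $Y$-partial gradients of the coupling term are $RY$ and $R^\top X$, and the $Z$-partial gradient is $-R+\beta\mathcal{A}^*(\mathcal{A}(Z)-\bm b)$.

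The key algebraic step is to show that whenever $\nabla_Z H=0$ we get $R=W$. First I would apply $\mathcal{A}$ to the equation $\alpha(XY^\top-Z)=\beta\mathcal{A}^*(\mathcal{A}Z-\bm b)$. Using $\mathcal{A}\mathcal{A}^*=\mathcal{I}_q$, this gives $\alpha(\mathcal{A}(XY^\top)-\mathcal{A}Z)=\beta(\mathcal{A}Z-\bm b)$. Hence $\mathcal{A}Z-\bm b=\frac{\alpha}{\alpha+\beta}(\mathcal{A}(XY^\top)-\bm b)$, noting that $\alpha+\beta=\alpha\beta\neq0$ because $\frac1\alpha+\frac1\beta=1$. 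It follows that
\[
R=\beta\mathcal{A}^*(\mathcal{A}Z-\bm b)=\frac{\alpha\beta}{\alpha+\beta}\,W=W .
\]
This is exactly where the condition $\frac1\alpha+\frac1\beta=1$ enters. Note that $\alpha+\beta\neq 0$ holds without any positivity assumption, so this step needs no definiteness condition. I expect this step to be the main (modest) obstacle: the point is that stationarity in $Z$ alone, not global minimization in $Z$, suffices to make the coupling residual $R$ equal to the true residual $W$.

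For (i), stationarity of $\Theta$ gives $\nabla_Z H=0$, hence $R=W$. Substituting this into the $X$- and $Y$-components yields $0\in\partial\Psi(\widetilde X)+W\widetilde Y+\lambda(\widetilde X-\widetilde Y)$ and $0\in\partial\Phi(\widetilde Y)+W^\top\widetilde X-\lambda(\widetilde X-\widetilde Y)$. This is precisely $0\in\partial\mathcal F_\lambda(\widetilde X,\widetilde Y)$.

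For (ii), I would verify directly that the $\widetilde Z$ in \eqref{eq:Zbar=} satisfies $\nabla_Z H=0$. Using $\mathcal{A}\mathcal{A}^*=\mathcal{I}_q$, we get $\mathcal{A}\widetilde Z-\bm b=\frac{\alpha}{\alpha+\beta}(\mathcal{A}(\widetilde X\widetilde Y^\top)-\bm b)$ and $\widetilde X\widetilde Y^\top-\widetilde Z=\frac{\beta}{\alpha+\beta}\widetilde W$. Therefore $\alpha(\widetilde X\widetilde Y^\top-\widetilde Z)=\frac{\alpha\beta}{\alpha+\beta}\widetilde W=\beta\mathcal{A}^*(\mathcal{A}\widetilde Z-\bm b)$, so $\nabla_Z H=0$ and $R=\widetilde W$. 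The $X$- and $Y$-inclusions for $\Theta$ then coincide with those of $\mathcal F_\lambda$, which hold by assumption, so $(\widetilde X,\widetilde Y,\widetilde Z)$ is a stationary point of $\Theta_{\alpha,\beta,\lambda}$.
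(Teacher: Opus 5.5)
Your proposal is correct and follows essentially the same route as the paper. Both arguments use the $Z$-stationarity equation, together with $\mathcal{A}\mathcal{A}^*=\mathcal{I}_q$ and $\frac{\alpha\beta}{\alpha+\beta}=1$, to identify $\alpha(\widetilde X\widetilde Y^\top-\widetilde Z)$ with $\mathcal{A}^*(\mathcal{A}(\widetilde X\widetilde Y^\top)-\bm b)$, and then substitute this into the $X$- and $Y$-inclusions. The only minor difference is in part (i): you apply $\mathcal{A}$ to the $Z$-equation directly, whereas the paper first inverts $\alpha\mathcal{I}+\beta\mathcal{A}^*\mathcal{A}$ via its inverse lemma to recover the explicit formula for $\widetilde Z$; this is a small shortcut rather than a different argument.
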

\begin{proof}
See Appendix \ref{proof:equal-sta}.
\end{proof}

Theorem \ref{thm:equal-sta} implies that, when $\mathcal{A}\mathcal{A}^*=\mathcal{I}_q$ and $\alpha$, $\beta$ are chosen appropriately, a stationary point of $\mathcal{F}_{\lambda}$ can be recovered from a stationary point of $\Theta_{\alpha,\beta,\lambda}$. Since the linear map $\mathcal{A}$ is no longer coupled with the bilinear term $XY^{\top}$ in $\Theta_{\alpha,\beta,\lambda}$, the task of computing a stationary point of $\Theta_{\alpha,\beta,\lambda}$ becomes considerably more tractable, especially in the regime of alternating optimization. It is therefore both natural and advantageous to design algorithms based on $\Theta_{\alpha,\beta,\lambda}$, rather than directly on the original objective function $\mathcal{F}_{\lambda}$, to obtain a stationary point of $\mathcal{F}_{\lambda}$.

Compared with Theorem \ref{thm:equal-min}, the conditions on $\alpha$ and $\beta$ in Theorem \ref{thm:equal-sta} are considerably more relaxed. Indeed, the requirements $\mathcal{A}\mathcal{A}^*=\mathcal{I}_q$, $\alpha \mathcal{I} + \beta \mathcal{A}^*\mathcal{A} \succ 0$, and $\frac{1}{\alpha} + \frac{1}{\beta} = 1$ in Theorem \ref{thm:equal-min} readily imply $\alpha>1$ and $\beta=\frac{\alpha}{\alpha-1}>1$. However, as indicated by the numerical results in Section \ref{sec-numer}, such choices of $\alpha>1$ do not necessarily yield the best empirical performance. In contrast, the weaker requirements $\mathcal{A}\mathcal{A}^*=\mathcal{I}_q$ and $\frac{1}{\alpha} + \frac{1}{\beta} = 1$ in Theorem \ref{thm:equal-sta} offer much greater flexibility in choosing $\alpha$ and $\beta$, and suitable choices can lead to significantly improved numerical performance. Motivated by these observations, in the next section, we build on this exact relaxation framework and develop an efficient alternating optimization algorithm for computing a stationary point of $\Theta_{\alpha,\beta,\lambda}$ and hence of $\mathcal{F}_\lambda$, under the weaker conditions $\mathcal{A}\mathcal{A}^*=\mathcal{I}_{q}$ and $\frac{1}{\alpha}+\frac{1}{\beta}=1$.

\section{An average-type nonmonotone alternating updating method}\label{sec-algo}

In this section, we develop an average-type nonmonotone alternating updating method (A-NAUM) to find a stationary point of $\Theta_{\alpha,\beta,\lambda}$ and hence of $\mathcal{F}_\lambda$, under the conditions $\mathcal{A}\mathcal{A}^*=\mathcal{I}_{q}$ and $\frac{1}{\alpha}+\frac{1}{\beta}=1$. For notational simplicity, we define
\begin{equation}\label{eq:definerho}
\rho \;:=\;\left\|\mathcal{I}-{\textstyle\frac{\beta}{\alpha+\beta}}\mathcal{A}^*\mathcal{A}\right\|^2,
\end{equation}
and choose a nonnegative scalar $\gamma\geq 0$ such that
\begin{equation}\label{eq:definegamma}
(\alpha+\gamma)\mathcal{I}+\beta\mathcal{A}^*\mathcal{A}\succeq 0.
\end{equation}
Since $\mathcal{A}\mathcal{A}^*=\mathcal{I}_q$, the eigenvalues of $\mathcal{A}^*\mathcal{A}$ belong to $\{0,1\}$ and then the eigenvalues of $\mathcal{I}-\frac{\beta}{\alpha+\beta}\mathcal{A}^*\mathcal{A}$ are either $1$ or $\frac{\alpha}{\alpha+\beta}$, which yields $\rho=\max\!\left\{1,\;\frac{\alpha^2}{(\alpha+\beta)^2}\right\}$. Similarly, the eigenvalues of $(\alpha+\gamma)\mathcal{I}+\beta\mathcal{A}^*\mathcal{A}$ are $(\alpha+\gamma)$ or $(\alpha+\beta+\gamma)$, and thus condition \eqref{eq:definegamma} holds whenever $\gamma\geq \max\{0,\,-\alpha,\,-(\alpha+\beta)\}$. The complete A-NAUM framework is given in Algorithm~\ref{algo-A-NAUM}.

\begin{algorithm}[ht]
\caption{A-NAUM for computing a stationary point of $\mathcal{F}_\lambda$}\label{algo-A-NAUM}
\textbf{Input:} $(X^0,\,Y^0)$, $\alpha$ and $\beta$ satisfying $\frac{1}{\alpha}+\frac{1}{\beta}=1$, $\mu^{\min}>0$, $0<\sigma^{\min}<\sigma^{\max}<\infty$, $\tau>1$, $c>0$, $0<p_{\min}<1$, $\rho$ as in \eqref{eq:definerho}, and $\gamma\geq0$ satisfying \eqref{eq:definegamma}. Set $\mathcal{R}_0=\mathcal{F}_{\lambda}(X^0,\,Y^0)$ and $k=0$. \\[3pt]
\textbf{while} a termination criterion is not met, \textbf{do} \vspace{-2mm}
\begin{itemize}[leftmargin=2cm]
\item[\textbf{Step 1}.] Compute $Z^k$ by \vspace{-2mm}
     \begin{equation}\label{Zkupdate}
     Z^k = {\textstyle\left(\mathcal{I} - \frac{\beta}{\alpha+\beta}\mathcal{A}^*\mathcal{A}\right)}(X^k(Y^k)^{\top})
     + {\textstyle\frac{\beta}{\alpha+\beta}}\mathcal{A}^*(\bm{b}).
     \end{equation}

\vspace{-4mm}
\item[\textbf{Step 2}.] Choose $\mu_k^0\geq\mu^{\min}$ and $\sigma_k^0 \in [\sigma^{\min},\,\sigma^{\max}]$ arbitrarily. Set $\mu_k=\mu_k^0$, $\sigma_k=\sigma_k^0$, and $\mu_k^{\max}=(\alpha+2\gamma\rho)\|Y^k\|^2+c$. \vspace{-1mm}
    \begin{enumerate}[(2a)]
    \item Set $\mu_k\leftarrow\min\{\mu_k, \,\mu_k^{\max}\}$. Compute $U$ by either \eqref{supro:U-prox}, \eqref{supro:U-proxlinear} or \eqref{supro:U-hier}.\label{algo-AcomputeU}

    \item Compute $V$ by either \eqref{supro:V-prox}, \eqref{supro:V-proxlinear} or \eqref{supro:V-hier}.\label{algo-AcomputeV}

    \item If
    \begin{equation}\label{eq:A-linesearch}
    \mathcal{F}_\lambda(U, \,V) - \mathcal{R}_k
    \leq -\frac{c}{2}\left(\|U - X^k\|_F^2 + \|V - Y^k\|_F^2\right),
    \end{equation}
    then go to \textbf{Step 3}.\label{algo-Alinesearch}

\item If $\mu_k = \mu_k^{\max}$, set  \vspace{-1mm}
    \begin{equation*}
    \sigma_k^{\max} = (\alpha + 2\gamma\rho) \|U\|^2 + c, ~~
    \sigma_k \leftarrow \min\{\tau\sigma_k, \,\sigma_k^{\max}\},  \vspace{-1mm}
    \end{equation*}
    and go to step (2b); otherwise, set $\mu_k\leftarrow\tau\mu_k$, $\sigma_k \leftarrow \tau\sigma_k$, and go to step (2a).\label{algo-Abacktrack}
      
\end{enumerate}

\vspace{-2mm}
\item[\textbf{Step 3}.] Set $X^{k+1}\leftarrow U$, $Y^{k+1} \leftarrow V$, $\bar{\mu}_k \leftarrow \mu_k$, $\bar{\sigma}_k \leftarrow \sigma_k$. Choose $p_{k+1}\in[p_{\min},\,1]$ to update  \vspace{-1mm}
    \begin{equation}\label{eq:omegadefinition}
    \mathcal{R}_{k+1} \leftarrow (1 - p_{k+1}) \mathcal{R}_{k}
    + p_{k+1}\mathcal{F}_{\lambda}(X^{k+1},\,Y^{k+1}).  \vspace{-1mm}
    \end{equation}
    Then, set $k \leftarrow k+1$ and go to \textbf{Step 1}.
    
\vspace{-2mm}
\end{itemize}
\textbf{end while}  \\
\textbf{Output}: $(X^k, \,Y^k)$. \vspace{0.5mm}
\end{algorithm}


As shown in Algorithm~\ref{algo-A-NAUM}, at the $k$th iteration we first compute $Z^k$ in \textbf{Step~1} using the explicit formula \eqref{Zkupdate}, which is derived from the stationarity condition \eqref{eq:Zbar=} of the potential function $\Theta_{\alpha,\beta,\lambda}$ with respect to $Z$. Next, in \textbf{Step~2}, we compute $U$ and $V$ as candidates for $X^{k+1}$ and $Y^{k+1}$, respectively, by (approximately) minimizing $\Theta_{\alpha,\beta,\lambda}$ with respect to $X$ and $Y$ in a Gauss--Seidel manner, that is, by updating one block while keeping the others fixed and up to date. The detailed updating rules are summarized as follows. For notational simplicity, we define
\begin{equation*}
\mathcal{H}_{\alpha}(X,\,Y,\,Z) := \frac{\alpha}{2}\|XY^{\top} - Z\|_F^2, \quad \forall\; (X,\,Y,\,Z)\in \mathbb{R}^{n\times r}\times \mathbb{R}^{n\times r} \times \mathbb{R}^{n\times n}.
\end{equation*}

\subsection{Candidate updates for $X$ and $Y$.}

Given $Z^k$ from \textbf{Step~1}, A-NAUM alternately computes $U$ and $V$ as candidates for $X^{k+1}$ and $Y^{k+1}$, respectively, using one of the following schemes with proximal parameters $\mu_k,\sigma_k>0$.

\paragraph{Proximal update.} Fixing $Y^k$ and $Z^k$, we can compute $U$ by minimizing $\Theta_{\alpha,\beta,\lambda}$ with respect to $X$, with an added proximal term, as follows:
\begin{align}
U &\in \arg\min_{X}\;
\Psi(X)+\mathcal{H}_{\alpha}(X,\,Y^k,\,Z^k)
+\frac{\lambda}{2}\|X-Y^k\|_F^2+\frac{\mu_k}{2}\|X-X^k\|_F^2, \label{supro:U-prox} \tag{U1a}\\
\Leftrightarrow \;\;
0 &\in \partial \Psi(U) + \alpha ( U(Y^k)^\top - Z^k ) Y^k + \lambda (U - Y^k) + \mu_k (U - X^k). \label{supro:Uopt-prox} \tag{U1b}
\end{align}
Similarly, we can compute $V$ with fixed $U$ and $Z^k$ as follows:
\begin{align}
V &\in \arg\min_{Y}\;
\Phi(Y)+\mathcal{H}_{\alpha}(U,\,Y,\,Z^k)
+\frac{\lambda}{2}\|U-Y\|_F^2+\frac{\sigma_k}{2}\|Y-Y^k\|_F^2, \label{supro:V-prox} \tag{V1a}\\
\Leftrightarrow \;\;
0 &\in \partial \Phi(V) + \alpha( UV^\top - Z^k)^\top U - \lambda (U-V) + \sigma_k (V - Y^k). \label{supro:Vopt-prox} \tag{V1b}
\end{align}

\paragraph{Prox-linear update.} We can linearize the smooth function $\mathcal{H}_\alpha$ and compute $U$ as follows:
\begin{align}
&U \in \arg\min\limits_{X}~ \Psi(X) + \langle \nabla_X \mathcal{H}_{\alpha}(X^k,Y^k,Z^k), \,X - X^k\rangle + \frac{\lambda}{2}\|X - Y^k\|_F^2+ \frac{\mu_k}{2}\|X - X^k\|_F^2, \label{supro:U-proxlinear} \tag{U2a} \\
&\Leftrightarrow \;\;
0 \in \partial \Psi(U) + \alpha ( X^k (Y^k)^\top - Z^k ) Y^k + \lambda (U - Y^k) + \mu_k (U - X^k). \label{supro:Uopt-proxlinear} \tag{U2b}
\end{align}
Similarly, we can compute $V$ as follows:
\begin{align}
V &\in \arg\min_{Y}\;
\Phi(Y)+\big\langle \nabla_Y\mathcal{H}_{\alpha}(U,\,Y^k,\,Z^k),\,Y-Y^k\big\rangle
+\frac{\lambda}{2}\|U-Y\|_F^2+\frac{\sigma_k}{2}\|Y-Y^k\|_F^2, \label{supro:V-proxlinear} \tag{V2a}\\
\Leftrightarrow\;\; 
0 &\in \partial \Phi(V) + \alpha ( U(Y^k)^\top - Z^k )^\top U - \lambda (U-V)+\sigma_k (V - Y^k). \label{supro:Vopt-proxlinear} \tag{V2b}
\end{align}

\paragraph{Hierarchical-prox update.} If $\Psi$ is column-wise separable, i.e., $\Psi(X)=\sum_{i=1}^r\psi_i(\bm{x}_i)$ for $X=[\bm{x}_1,\ldots,\bm{x}_r]\in\mathbb{R}^{n\times r}$, we can update $U$ column-by-column. For $i=1,\ldots,r$, compute
\begin{align}
&\bm{u}_i \in \arg\min_{\bm{x}_i}\;
\psi_i(\bm{x}_i)
+\mathcal{H}_{\alpha}\big(\bm{u}_{j<i},\,\bm{x}_i,\,\bm{x}_{j>i}^k,\,Y^k,\,Z^k\big) +\frac{\lambda}{2}\|\bm{x}_i-\bm{y}_i^k\|^2+\frac{\mu_k}{2}\|\bm{x}_i-\bm{x}_i^k\|^2, \label{supro:U-hier} \tag{U3a} \\
&\hspace{-2mm}\Leftrightarrow \;0 \in \partial \psi_i(\bm{u}_i) + \alpha\!\left( \sum_{j=1}^i \bm{u}_j (\bm{y}_j^k)^\top \!+\! \sum_{j=i+1}^r \bm{x}_j^k (\bm{y}_j^k)^\top \!-\! Z^k \right)\!\bm{y}_i^k + \lambda (\bm{u}_i - \bm{y}_i^k) + \mu_k (\bm{u}_i - \bm{x}_i^k), \label{supro:Uopt-hier} \tag{U3b}
\end{align}
where $\bm{u}_{j<i}:=(\bm{u}_1,\ldots,\bm{u}_{i-1})$ and $\bm{x}_{j>i}^k:=(\bm{x}_{i+1}^k,\ldots,\bm{x}_r^k)$. Similarly, if $\Phi$ is column-wise separable, i.e., $\Phi(Y)=\sum_{i=1}^r\phi_i(\bm{y}_i)$ for $Y=[\bm{y}_1,\ldots,\bm{y}_r]\in\mathbb{R}^{n\times r}$, we can update $V$ column-by-column. For $i=1,\ldots,r$, compute
\begin{align}
&\bm{v}_i \in \arg\min_{\bm{y}_i}\;
\phi_i(\bm{y}_i)
+\mathcal{H}_{\alpha}\big(U,\,\bm{v}_{j<i},\,\bm{y}_i,\,\bm{y}_{j>i}^k,\,Z^k\big)
+\frac{\lambda}{2}\|\bm{u}_i-\bm{y}_i\|^2+\frac{\sigma_k}{2}\|\bm{y}_i-\bm{y}_i^k\|^2, \label{supro:V-hier} \tag{V3a} \\
\Leftrightarrow \; \; &0 \in \partial \phi_i(\bm{v}_i) + \alpha \left( \sum_{j=1}^i \bm{u}_j \bm{v}_j^\top + \sum_{j=i+1}^r \bm{u}_j (\bm{y}_j^k)^\top - Z^k\right)^\top \bm{u}_i - \lambda (\bm{u}_i - \bm{v}_i)+ \sigma_k (\bm{v}_i - \bm{y}_i^k), \label{supro:Vopt-hier} \tag{V3b}
\end{align}
where $\bm{v}_{j<i}:=(\bm{v}_1,\ldots,\bm{v}_{i-1})$ and
$\bm{y}_{j>i}^k:=(\bm{y}_{i+1}^k,\ldots,\bm{y}_r^k)$.

After computing $(U,V)$, we set $(X^{k+1},Y^{k+1})=(U,V)$ if the average-type nonmonotone line search criterion \eqref{eq:A-linesearch} is satisfied, i.e., if $\mathcal{F}_\lambda(U,V)$ is sufficiently smaller than the reference value $\mathcal{R}_k$. Otherwise, the proximal parameters $\mu_k$ and $\sigma_k$ are increased and the above procedure is repeated. Later, we will show that the criterion \eqref{eq:A-linesearch} is well-defined under mild conditions, ensuring that the A-NAUM in Algorithm \ref{algo-A-NAUM} is well-defined.

\subsection{Comparisons with existing alternating frameworks}

The iterative framework of A-NAUM is motivated by the max-type nonmonotone alternating updating method (M-NAUM) proposed by Yang et al. \cite{ypc2018nonmonotone} for solving a class of matrix factorization problems, where an exact relaxation technique is combined with a max-type nonmonotone line search to achieve strong numerical performance. Nevertheless, the proposed A-NAUM differs from M-NAUM in two key aspects. First, A-NAUM is designed for the more general formulation \eqref{quadpenaltypro} and allows for a positive penalty parameter $\lambda>0$, whereas M-NAUM focuses on the case $\lambda=0$. Second, and more importantly, these two methods employ fundamentally different nonmonotone mechanisms. 

Specifically, for the case $\lambda=0$, M-NAUM \cite{ypc2018nonmonotone} adopts the nonmonotone line search framework of Grippo, Lampariello, and Lucidi \cite{gll1986nonmonotone}, where the reference value is defined as
\begin{equation*}
\mathcal{R}_k:=\max\left\{\mathcal{F}_\lambda(X^t, \,Y^t):
\,t=k,\,k-1,\ldots,[k-N]_{+}\right\}
\end{equation*}
for some fixed $N\in\mathbb{N}$. This \textit{max-type} strategy permits temporary increases of the objective value, provided it remains below the maximum over the most recent $N+1$ iterations. Such flexibility often allows smaller proximal parameters $\mu_k$ and $\sigma_k$, which can be beneficial in practice. 

In contrast, inspired by Zhang and Hager \cite{zh2004nonmonotone}, A-NAUM adopts an \emph{average-type} nonmonotone strategy, where the reference value $\mathcal{R}_k$ is defined as a convex combination of the previous reference value $\mathcal{R}_{k-1}$ and the current function value $\mathcal{F}_\lambda(X^k, \,Y^k)$ with a weight parameter $p_k$. This mechanism could yield a smoother and more stable nonmonotone behavior.

Both max-type and average-type nonmonotone line search strategies have been widely used in proximal-gradient-type methods to enhance numerical performance; see, e.g., \cite{clp2016penalty,gzlhy2013general,wnf2009sparse,y2024proximal,ypc2018nonmonotone}. However, max-type strategies are typically associated with weaker convergence guarantees, which motivates our adoption of the average-type strategy in A-NAUM. As will be shown later, compared with M-NAUM, which only has global subsequential convergence, the proposed A-NAUM admits substantially stronger convergence properties (including the convergence of the whole sequence and the convergence rate) under weaker assumptions, while maintaining comparable or even better numerical performance, as demonstrated in Section \ref{sec-numer}.


Our A-NAUM also differs fundamentally from classical alternating minimization schemes (see, e.g., \cite{abs2013convergence,xy2013block}) in its treatment of the auxiliary variable $Z$ and, consequently, in the descent behavior it enforces. Under the stronger assumptions of Theorem~\ref{thm:equal-min}, the update of $Z$ in \eqref{Zkupdate} corresponds to an exact minimization of $\Theta_{\alpha,\beta,\lambda}$ with respect to $Z$. In this regime, A-NAUM with $p_{k}\equiv1$ can be viewed as an alternating-minimization-type method applied to the decoupled problem $\min_{X,Y,Z}\left\{\Theta_{\alpha,\beta,\lambda}(X,\,Y,\,Z)\right\}$. In contrast, when A-NAUM is applied under the weaker conditions $\mathcal{A}\mathcal{A}^*=\mathcal{I}_q$ and $\frac{1}{\alpha}+\frac{1}{\beta}=1$, the update of $Z$ in \eqref{Zkupdate} is only required to satisfy the stationarity condition and does not necessarily yield a monotone decrease of $\Theta_{\alpha,\beta,\lambda}$. This deliberate nonmonotonicity distinguishes A-NAUM from classical alternating minimization frameworks and substantially enlarges the admissible parameter regime. In particular, it allows for negative values of $\alpha$ or $\beta$, which are excluded in standard alternating minimization but are shown in Section~\ref{sec-numer} to provide improved numerical performance.

\subsection{A unified and flexible algorithmic framework}

The A-NAUM in Algorithm~\ref{algo-A-NAUM} is deliberately designed as a \textit{unified and flexible} algorithmic framework. It integrates the exact relaxation technique with an average-type nonmonotone line search, while accommodating multiple block-update strategies that can be selectively employed to exploit problem structures and to build accelerated variants. Owing to the generality of \eqref{quadpenaltypro}, A-NAUM applies to a wide range of RMF/SRMF models, serving as a unified algorithmic backbone. Moreover, A-NAUM is highly modular: both $X$ and $Y$ updates admit three alternatives, yielding nine valid combinations and allowing practitioners to tailor the algorithm to problem structures and computational constraints. 

For example, when $\Psi$ or $\Phi$ is column-wise separable, one can leverage the decomposition
\begin{equation*}
XY^\top={\textstyle\sum_{i=1}^r}\bm{x}_i\bm{y}_i^\top,
\qquad
X=[\bm{x}_1,\ldots,\bm{x}_r]\in\mathbb{R}^{m\times r},\;\;
Y=[\bm{y}_1,\ldots,\bm{y}_r]\in\mathbb{R}^{n\times r},
\end{equation*}
together with the structure of $\Theta_{\alpha,\beta,\lambda}$ to implement the hierarchical-prox updates in \eqref{supro:U-hier} or \eqref{supro:V-hier}, that is, column-wise updates of the factor matrix even when $\mathcal{A}\neq\mathcal{I}$. The principal computational advantage is that, after simple reformulations, each column subproblem reduces to \emph{evaluating the proximal mapping of $\psi_i$ (or $\phi_i$)}, which admits a closed-form solution or can be computed efficiently for many commonly used regularizers. This leads to inexpensive inner updates and improves scalability when $r$ is moderate and the proximal operators are cheap.

\section{Convergence analysis}\label{sec-convanal}

In this section, we investigate the convergence behavior of A-NAUM. We first establish the
global subsequential convergence of A-NAUM. Then, by assuming that the Kurdyka-{\L}ojasiewicz property holds for $\mathcal{F}_{\lambda}$, we prove the global convergence of the whole sequence generated by A-NAUM and analyze the rate of convergence. Before presenting these results, we impose the following standing assumptions, which will be used throughout the convergence analysis.

\begin{assumption}\label{assumA}
{\rm (i)} $\Psi$ (or $\psi_i$ for all $i$ if $\Psi(X) = \sum_{i=1}^r \psi_i(\bm{x}_i)$) and $\Phi$ (or $\phi_i$ for all $i$ if $\Phi(Y) = \sum_{i=1}^r \phi_i(\bm{y}_i)$) are proper closed and bounded from below; {\rm (ii)} $\mathcal{A}\mathcal{A}^*=\mathcal{I}_q$; {\rm (iii)} $\frac{1}{\alpha} + \frac{1}{\beta} = 1$; {\rm (iv)} When $\lambda=0$, $\mathcal{F}_{0}$ is level-bounded.
\end{assumption}

Under Assumption \ref{assumA}(i), the objective functions of the subproblems \eqref{supro:U-prox}, \eqref{supro:U-proxlinear}, \eqref{supro:U-hier}, \eqref{supro:V-prox}, \eqref{supro:V-proxlinear}, and \eqref{supro:V-hier} are level-bounded and therefore admit minimizers, although such minimizers may not be unique due to the possible nonconvexity. Moreover, unlike M-NAUM \cite{ypc2018nonmonotone}, we do not require $\Psi$ and $\Phi$ to be continuous on their respective domains. Assumption \ref{assumA}(ii) is satisfied, for example, when $\mathcal{A}$ is the vectorization map from $\mathbb{R}^{n\times n}$ to $\mathbb{R}^{n^2}$, or the sampling map $\mathcal{P}_{\Omega}$ from $\mathbb{R}^{n\times n}$ to $\mathbb{R}^{q}$.  Finally, under Assumption \ref{assumA}(i), it is straightforward to verify that $\mathcal{F}_{\lambda}$ is level-bounded for any positive $\lambda>0$. This, together with Assumption \ref{assumA}(iv), ensures that $\mathcal{F}_{\lambda}$ is level-bounded for any nonnegative $\lambda\geq0$, a property that will be used to establish the boundedness of the iterates in the subsequent analysis.

\subsection{Convergence analysis I: global subsequential convergence}

We first establish a sufficient descent property, which forms a key technical ingredient in our subsequent analysis. The proof is analogous to that of \cite[Lemma~5.1]{ypc2018nonmonotone}, and is provided in Appendix~\ref{proof:lem:Flambda-suffidescent} for completeness.

\begin{lemma}[\textbf{Sufficient descent of $\mathcal{F}_\lambda$}]\label{lem:Flambda-suffidescent}
Suppose that Assumption \ref{assumA} holds. At the $k$-th iteration, given $(X^k, \,Y^k)$, let $(U, \,V)$ be the candidate for $(X^{k+1}, \,Y^{k+1})$ generated by Steps (2\ref{algo-AcomputeU}) and (2\ref{algo-AcomputeV}). Then, for each $k\geq 0$, we have
\begin{equation}\label{eq:Flambdadescent}
\begin{aligned}
&\quad \mathcal{F}_\lambda(U, \,V) - \mathcal{F}_\lambda(X^k, \,Y^k) \\
&\leq -\frac{\mu_k-(\alpha+2\gamma\rho)\|Y^k\|^2}{2}\|U - X^k\|_F^2 
- \frac{\sigma_k-(\alpha+2\gamma\rho)\|U\|^2}{2}\|V - Y^k\|_F^2.
\end{aligned}
\end{equation}
\end{lemma}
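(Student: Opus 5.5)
The plan is to pass through the potential function $\Theta_{\alpha,\beta,\lambda}$, using two facts. First, $\mathcal{F}_\lambda$ equals $\Theta_{\alpha,\beta,\lambda}$ evaluated at the ``stationary'' $Z$. Second, each block update decreases $\Theta_{\alpha,\beta,\lambda}$ with $Z^k$ held fixed.

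\textbf{Step 1: $\mathcal{F}_\lambda$ as $\Theta_{\alpha,\beta,\lambda}$ at the stationary $Z$.} For any $W\in\mathbb{R}^{n\times n}$ define
\[
\mathcal{Z}(W):=\big(\mathcal{I}-\tfrac{\beta}{\alpha+\beta}\mathcal{A}^*\mathcal{A}\big)(W)+\tfrac{\beta}{\alpha+\beta}\mathcal{A}^*\bm b,
\]
so that $Z^k=\mathcal{Z}(X^k(Y^k)^\top)$. The map $Z\mapsto \Theta_{\alpha,\beta,\lambda}(X,Y,Z)$ is a quadratic with Hessian $\alpha\mathcal{I}+\beta\mathcal{A}^*\mathcal{A}$, and $\mathcal{Z}(XY^\top)$ is its stationary point (as in Theorem~\ref{thm:equal-sta}). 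Substituting $\mathcal{Z}(XY^\top)$ and using $\mathcal{A}\mathcal{A}^*=\mathcal{I}_q$, the $Z$-part collapses to $\frac{\alpha\beta}{2(\alpha+\beta)}\|\mathcal{A}(XY^\top)-\bm b\|^2$. Since $\frac1\alpha+\frac1\beta=1$ gives $\frac{\alpha\beta}{\alpha+\beta}=1$, this yields the identity
\[
\Theta_{\alpha,\beta,\lambda}(X,Y,\mathcal{Z}(XY^\top))=\mathcal{F}_\lambda(X,Y).
\]
This is the computation already underlying Theorems~\ref{thm:equal-min}--\ref{thm:equal-sta}.

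\textbf{Step 2: cost of replacing $Z^k$.} Set $Z':=\mathcal{Z}(UV^\top)$. The exact Taylor expansion of the quadratic around its stationary point gives
\[
\mathcal{F}_\lambda(U,V)=\Theta(U,V,Z')=\Theta(U,V,Z^k)-\tfrac12\langle Z^k-Z',(\alpha\mathcal{I}+\beta\mathcal{A}^*\mathcal{A})(Z^k-Z')\rangle .
\]
By \eqref{eq:definegamma}, the last term is at most $\frac{\gamma}{2}\|Z^k-Z'\|_F^2$. Next, $Z^k-Z'=(\mathcal{I}-\frac{\beta}{\alpha+\beta}\mathcal{A}^*\mathcal{A})(X^k(Y^k)^\top-UV^\top)$, so by \eqref{eq:definerho} $\|Z^k-Z'\|_F^2\le\rho\|X^k(Y^k)^\top-UV^\top\|_F^2$. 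Writing $X^k(Y^k)^\top-UV^\top=(X^k-U)(Y^k)^\top+U(Y^k-V)^\top$ and using $\|a+b\|^2\le2\|a\|^2+2\|b\|^2$, we get
\[
\tfrac{\gamma}{2}\|Z^k-Z'\|_F^2\le \gamma\rho\|Y^k\|^2\|U-X^k\|_F^2+\gamma\rho\|U\|^2\|V-Y^k\|_F^2 .
\]
These are exactly the $2\gamma\rho$ contributions in \eqref{eq:Flambdadescent}.

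\textbf{Step 3: block descent with $Z^k$ fixed.} It remains to show
\[
\Theta(U,V,Z^k)\le\Theta(X^k,Y^k,Z^k)-\tfrac{\mu_k-\alpha\|Y^k\|^2}{2}\|U-X^k\|_F^2-\tfrac{\sigma_k-\alpha\|U\|^2}{2}\|V-Y^k\|_F^2 .
\]
Together with $\Theta(X^k,Y^k,Z^k)=\mathcal{F}_\lambda(X^k,Y^k)$ from Step 1, this finishes the proof. The argument splits by update rule.
\begin{itemize}
\item For \eqref{supro:U-prox}, compare the subproblem objective at $U$ with its value at $X^k$. This gives the decrease $\frac{\mu_k}{2}\|U-X^k\|_F^2$, which is stronger than needed when $\alpha\ge0$.
\item For \eqref{supro:U-proxlinear}, use the exact expansion $\mathcal{H}_\alpha(U,Y^k,Z^k)=\mathcal{H}_\alpha(X^k,Y^k,Z^k)+\langle\nabla_X\mathcal{H}_\alpha,U-X^k\rangle+\frac{\alpha}{2}\|(U-X^k)(Y^k)^\top\|_F^2$. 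Then bound the quadratic remainder by $\frac{\alpha}{2}\|Y^k\|^2\|U-X^k\|_F^2$.
\item For \eqref{supro:U-hier}, compare each column subproblem at $\bm u_i$ with its value at $\bm x_i^k$. Sum over $i$; the intermediate terms telescope because each column step uses the already-updated previous columns.
\item The $V$-updates are treated identically, with $U$ fixed and $\|U\|^2$ in place of $\|Y^k\|^2$.
\end{itemize}

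\textbf{Main obstacle.} The delicate point is the sign of $\alpha$, since the paper allows $\alpha<0$. In that case the remainder $\frac{\alpha}{2}\|(U-X^k)(Y^k)^\top\|_F^2$ is not bounded above by $\frac{\alpha}{2}\|Y^k\|^2\|U-X^k\|_F^2$. To handle it, I would first try not to split the $Z$-Hessian term in Step 2 too early. Instead, I would combine it with the $\alpha$-remainders from Step 3 and rely on $\gamma\ge-\alpha$ in \eqref{eq:definegamma} to absorb the negative part. Failing that, I would state the estimate with $\max\{\alpha,0\}$ and observe that it is implied by the displayed bound whenever $\alpha\ge 0$. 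For the hierarchical updates, the telescoping bookkeeping across columns is the other place I would check carefully.
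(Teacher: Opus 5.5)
Your Steps 1--3 are the paper's proof. This covers the identity $\mathcal{F}_\lambda=\Theta_{\alpha,\beta,\lambda}$ at the stationary $Z$ (Lemma~\ref{lem:f=theta}). It covers the $\frac{\gamma}{2}\|Z'-Z^k\|_F^2$ bound on the $Z$-correction: the paper gets it from convexity of $Z\mapsto\Theta_{\alpha,\beta,\lambda}(U,V,Z)+\frac{\gamma}{2}\|Z-Z^k\|_F^2$, you from the exact quadratic expansion, which amounts to the same thing. It also covers the $\rho$-estimate with $(a+b)^2\le 2a^2+2b^2$ and the three block-descent cases with column-wise telescoping. For $\alpha\ge 0$ your argument is complete.

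The obstacle you flag for $\alpha<0$ is genuine, and the paper's proof has the same hole. In its proximal and hierarchical cases it only derives $-\frac{\sigma_k}{2}\|V-Y^k\|_F^2$ and then asserts \eqref{eq:thetadiffer2}. In the prox-linear case it uses a Lipschitz modulus $\alpha\|U\|^2$. Both steps need $\alpha\ge0$.

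Your remedy (a) cannot fix this for the proximal and hierarchical rules, because there the stated inequality is false. Your Step 2 expansion evaluates exactly to $\mathcal{F}_\lambda(U,V)=\Theta_{\alpha,\beta,\lambda}(U,V,Z^k)-\frac{\alpha}{2}\|D\|_F^2+\frac12\|\mathcal{A}(D)\|^2$ with $D:=UV^\top-X^k(Y^k)^\top$. Nonconvex $\Psi,\Phi$ can make the subproblem comparisons essentially tight, so nothing offsets the positive term $-\frac{\alpha}{2}\|D\|_F^2$.

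Concretely, take $n=r=q=1$, $\mathcal{A}=\mathcal{I}$, $\bm b=0$, $\lambda=0$, $\alpha=-3$, $\beta=\frac34$, and the paper's own choice $\gamma=\max\{0,-\alpha,-(\alpha+\beta)\}=3$. Then $\rho=\frac{16}{9}$ and $\alpha+2\gamma\rho=\frac{23}{3}$. Let $X^0=Y^0=1$, and let $\Psi,\Phi$ be finite only on $\{1,2\}$ and $\{1,\frac32\}$, with $\Psi(1)=\Phi(1)=0$, $\Psi(2)=\frac12-\frac{\mu_0}{2}-\epsilon$, $\Phi(\frac32)=\frac72-\frac{\sigma_0}{8}-\epsilon$, where $\epsilon>0$. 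Then \eqref{supro:U-prox} and \eqref{supro:V-prox} return $U=2$, $V=\frac32$. The left-hand side of \eqref{eq:Flambdadescent} is $8-\frac{\mu_0}{2}-\frac{\sigma_0}{8}-2\epsilon$, while the right-hand side is $\frac{23}{3}-\frac{\mu_0}{2}-\frac{\sigma_0}{8}$, so the lemma fails for $\epsilon<\frac16$.

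Your fallback (b) is therefore the correct resolution. What the argument (yours and the paper's) actually proves is \eqref{eq:Flambdadescent} with $\alpha$ replaced by $\max\{\alpha,0\}$. Consequently $\mu_k^{\max}$ and $\sigma_k^{\max}$ in Algorithm~\ref{algo-A-NAUM} must be changed the same way for Lemma~\ref{lem:A-welldefined} to go through.

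Idea (a) does work when both blocks use the prox-linear rule. Set $a:=(U-X^k)(Y^k)^\top$ and $b:=U(V-Y^k)^\top$. The exact remainders $\frac{\alpha}{2}(\|a\|_F^2+\|b\|_F^2)$ combine with the $Z$-correction into $\frac12\|\mathcal{A}(a+b)\|^2-\alpha\langle a,b\rangle\le\frac{2-\alpha}{2}(\|a\|_F^2+\|b\|_F^2)$. Since $\gamma\ge-(\alpha+\beta)$ and $\rho\ge\alpha^2/(\alpha+\beta)^2$, one gets $\gamma\rho\ge 1-\alpha$, which is exactly what is needed.
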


From Lemma \ref{lem:Flambda-suffidescent}, we see that a sufficient descent of $\mathcal{F}_\lambda(X,\,Y)$ can be guaranteed provided that $\mu_k$ and $\sigma_k$ are sufficiently large. Building on this result, we show that the average-type nonmonotone line search criterion \eqref{eq:A-linesearch} is well-defined.

\begin{lemma}[\textbf{Well-definedness of criterion \eqref{eq:A-linesearch}}]\label{lem:A-welldefined}
Suppose that Assumption \ref{assumA} holds. Then, for each $k\geq 0$, the line search criterion \eqref{eq:A-linesearch} is satisfied after finitely many inner iterations.
\end{lemma}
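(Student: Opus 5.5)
We argue by induction on $k$, with the inductive claim being $\mathcal{F}_\lambda(X^k,Y^k)\le\mathcal{R}_k$. The strategy is to combine this with Lemma~\ref{lem:Flambda-suffidescent} once the backtracking loop has driven $\mu_k$ and $\sigma_k$ to their caps. The base case is immediate because $\mathcal{R}_0=\mathcal{F}_\lambda(X^0,Y^0)$.

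For the inductive step, suppose $\mathcal{F}_\lambda(X^k,Y^k)\le\mathcal{R}_k$ and that \eqref{eq:A-linesearch} is eventually accepted at iteration $k$. Then $\mathcal{F}_\lambda(X^{k+1},Y^{k+1})\le\mathcal{R}_k$. Update \eqref{eq:omegadefinition} makes $\mathcal{R}_{k+1}$ a convex combination of $\mathcal{R}_k$ and $\mathcal{F}_\lambda(X^{k+1},Y^{k+1})$. Hence $\mathcal{R}_{k+1}\ge\mathcal{F}_\lambda(X^{k+1},Y^{k+1})$, which keeps the induction going. By Lemma~\ref{lem:Flambda-suffidescent}, it therefore suffices to show that, after finitely many inner loops, the current candidate $(U,V)$ satisfies
\[
\mathcal{F}_\lambda(U,V)-\mathcal{F}_\lambda(X^k,Y^k)\le-\tfrac{c}{2}\big(\|U-X^k\|_F^2+\|V-Y^k\|_F^2\big).
\]
Adding the inductive inequality $\mathcal{F}_\lambda(X^k,Y^k)\le\mathcal{R}_k$ then yields \eqref{eq:A-linesearch}.

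Now trace the inner loop. Each failed test in (2d) either multiplies $\mu_k$ by $\tau>1$ (capped at $\mu_k^{\max}$ in (2a)) or, once $\mu_k=\mu_k^{\max}$, leaves $\mu_k$ fixed. Since $\mu_k^{\max}=(\alpha+2\gamma\rho)\|Y^k\|^2+c$ is a fixed finite number depending only on $Y^k$, after finitely many failures $\mu_k=\mu_k^{\max}$. At that point $U$ is frozen: subsequent returns go only to (2b), so $U$ is no longer recomputed. With $\mu_k=\mu_k^{\max}$, the first coefficient in \eqref{eq:Flambdadescent} is exactly $c/2$.

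From then on, each further failure sets $\sigma_k\leftarrow\min\{\tau\sigma_k,\sigma_k^{\max}\}$, where $\sigma_k^{\max}=(\alpha+2\gamma\rho)\|U\|^2+c$ is now fixed because $U$ is fixed. After finitely many steps $\sigma_k=\sigma_k^{\max}$, so the second coefficient in \eqref{eq:Flambdadescent} also equals $c/2$, and the test passes. One could worry that $\sigma_k^{\max}$ is smaller than the current $\sigma_k$. In that case the $\min$ immediately sets $\sigma_k=\sigma_k^{\max}$, which is still enough. If instead (2c) succeeds earlier, we are done anyway.

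The main obstacle is making sure Lemma~\ref{lem:Flambda-suffidescent} applies with the \emph{current} $U$ and parameters at each inner pass. In particular, the bound for $\sigma_k$ depends on $U$, which changes while $\mu_k$ is still increasing. This is why the argument must wait until $\mu_k$ hits its cap and $U$ stabilizes; the structure of Step (2d) guarantees exactly this. A secondary point is that $\alpha+2\gamma\rho$ could be negative, since $\alpha$ may be negative. This causes no difficulty: the caps still make both coefficients equal to $c/2$, and $\mu_k,\sigma_k>0$ are only needed for the subproblems, which are well posed under Assumption~\ref{assumA}(i).
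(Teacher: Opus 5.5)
Your proposal is correct and follows essentially the paper's route. You establish $\mathcal{F}_\lambda(X^k,Y^k)\le\mathcal{R}_k$ from acceptance at the previous iteration and the convex-combination update, then note that $\mu_k$ reaches $\mu_k^{\max}$ (freezing $U$) and $\sigma_k$ then reaches $\sigma_k^{\max}$ in finitely many steps, where Lemma~\ref{lem:Flambda-suffidescent} gives the $\frac{c}{2}$-descent. The paper phrases this as a contradiction argument and also records explicit inner-iteration bounds, later reused in Proposition~\ref{prop:Omegak-descentbehavior}(v). One small correction: $\alpha+2\gamma\rho$ cannot be negative, since the requirement \eqref{eq:definegamma} on $\gamma$ forces $\alpha+2\gamma\rho>0$. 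For instance, $\gamma\ge\max\{0,-\alpha,-(\alpha+\beta)\}$ with $\rho\ge1$ gives $\alpha+2\gamma\rho\ge|\alpha|$, so $\mu_k^{\max},\sigma_k^{\max}\ge c>0$ automatically.
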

\begin{proof}
See Appendix \ref{proof:lem:A-welldefined}.
\end{proof}

Based on the above results, we further establish the following properties.


\begin{proposition}\label{prop:Omegak-descentbehavior}
Suppose that Assumption \ref{assumA} holds. Let $\{(X^k,\,Y^k)\}$, $\{\bar{\mu}_{k}\}$ and $\{\bar{\sigma}_{k}\}$ be sequences generated by the A-NAUM in Algorithm \ref{algo-A-NAUM}. Then, the following statements hold.
\begin{itemize}
\item [{\rm(i)}] $\mathcal{R}_k\geq\mathcal{F}_\lambda(X^k, \,Y^k)$ for all $k\geq 0$;

\item [{\rm(ii)}] The sequence $\{\mathcal{R}_k\}$ is non-increasing and $\zeta:= \lim\limits_{k\to\infty}\mathcal{R}_k$ exists;

\item [{\rm(iii)}] The sequence $\{\mathcal{F}_\lambda(X^k,\,Y^k)\}$ converges to the same limit $\zeta$ as $\{\mathcal{R}_k\}$;

\item[{\rm(iv)}] $\lim\limits_{k \to \infty} \|X^{k+1} - X^{k}\|_{F}= 0$ and $\lim\limits_{k \to \infty} \|Y^{k+1} - Y^{k}\|_{F}= 0$;
    
\item[{\rm(v)}]     
    $\{(X^{k}, \,Y^{k})\}$, $\{\bar{\mu}_{k}\}$, and $\{\bar{\sigma}_{k}\}$ are bounded.

\end{itemize}
\end{proposition}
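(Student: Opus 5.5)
The argument is the standard Zhang--Hager-type analysis of the averaged reference value $\mathcal{R}_k$. It combines the accepted line-search inequality \eqref{eq:A-linesearch} with the update \eqref{eq:omegadefinition}, the level-boundedness of $\mathcal{F}_\lambda$ (from Assumption \ref{assumA}(i),(iv)), and the fact that $\Psi,\Phi$ are bounded below. Lemma \ref{lem:A-welldefined} guarantees that each iteration terminates, so the sequences are well defined. Throughout, write $\Delta_k:=\|X^{k+1}-X^k\|_F^2+\|Y^{k+1}-Y^k\|_F^2$.

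For (i), I will argue by induction. The base case is $\mathcal{R}_0=\mathcal{F}_\lambda(X^0,Y^0)$. If $\mathcal{R}_k\ge \mathcal{F}_\lambda(X^k,Y^k)$, then acceptance in \eqref{eq:A-linesearch} gives $\mathcal{F}_\lambda(X^{k+1},Y^{k+1})\le \mathcal{R}_k-\frac c2\Delta_k\le\mathcal{R}_k$. Since $\mathcal{R}_{k+1}$ is a convex combination of $\mathcal{R}_k$ and $\mathcal{F}_\lambda(X^{k+1},Y^{k+1})$, it lies above the latter.

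For (ii), substitute the same inequality into \eqref{eq:omegadefinition} to obtain
\[
\mathcal{R}_{k+1}\le \mathcal{R}_k-\frac{c\,p_{k+1}}{2}\Delta_k\le \mathcal{R}_k-\frac{c\,p_{\min}}{2}\Delta_k .
\]
So $\{\mathcal{R}_k\}$ is non-increasing. It is bounded below because $\mathcal{R}_k\ge\mathcal{F}_\lambda(X^k,Y^k)\ge\inf\Psi+\inf\Phi>-\infty$, and hence $\zeta$ exists.

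Summing the displayed inequality over $k$ gives $\sum_k\Delta_k\le \frac{2}{c\,p_{\min}}(\mathcal{R}_0-\zeta)<\infty$. This yields (iv).

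For (iii), rearrange \eqref{eq:omegadefinition} as
\[
\mathcal{F}_\lambda(X^{k+1},Y^{k+1})=\mathcal{R}_k+\frac{\mathcal{R}_{k+1}-\mathcal{R}_k}{p_{k+1}} .
\]
Because $p_{k+1}\ge p_{\min}>0$ and $\mathcal{R}_{k+1}-\mathcal{R}_k\to0$, the right-hand side tends to $\zeta$.

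For (v), boundedness of the iterates follows from (i) and (ii): $\mathcal{F}_\lambda(X^k,Y^k)\le\mathcal{R}_k\le\mathcal{R}_0$, so every iterate lies in the level set $\{\mathcal{F}_\lambda\le\mathcal{R}_0\}$, which is bounded by level-boundedness.

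Boundedness of $\{\bar\mu_k\}$ and $\{\bar\sigma_k\}$ is the only step that needs care, and it rests on the backtracking mechanics in Step 2. First, $\bar\mu_k\le\max\{\mu_k^0,\mu_k^{\max}\}$ because of the cap $\mu_k\leftarrow\min\{\mu_k,\mu_k^{\max}\}$. The cap $\mu_k^{\max}=(\alpha+2\gamma\rho)\|Y^k\|^2+c$ is bounded since $\{Y^k\}$ is bounded. I will need to assume, as is implicit in the algorithm, that the initial trial values $\mu_k^0$ are chosen from a bounded range; this assumption is unavoidable, and with it $\{\bar\mu_k\}$ is bounded.

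For $\bar\sigma_k$, in the final phase of backtracking $\sigma_k$ is capped by $\sigma_k^{\max}=(\alpha+2\gamma\rho)\|U\|^2+c$, where $U$ is the candidate at $\mu_k=\mu_k^{\max}$. Lemma \ref{lem:Flambda-suffidescent} is what makes the procedure stop at the cap. Once $\mu_k=\mu_k^{\max}$ and $\sigma_k=\sigma_k^{\max}$, it gives $\mathcal{F}_\lambda(U,V)\le\mathcal{F}_\lambda(X^k,Y^k)-\frac c2(\|U-X^k\|_F^2+\|V-Y^k\|_F^2)$, and the right-hand side is at most $\mathcal{R}_k$ minus the same term by (i). So the line search is accepted there.

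Hence $\bar\sigma_k\le\max\{\sigma^{\max},\tau\sigma^{\max}_{\mathrm{prev}},\sigma_k^{\max}\}$, and it remains to bound $\|U\|$. This $U$ is not necessarily the accepted iterate, so I will bound it directly from its defining subproblem. Comparing the subproblem objective at $U$ with its value at $X^k$, and using that $\Psi$ is bounded below, $\mu_k^{\max}\ge c>0$, and that $X^k,Y^k,Z^k$ are bounded, gives a uniform bound on $\|U-X^k\|_F$. For the prox-linear subproblem \eqref{supro:U-proxlinear}, the objective is strongly convex plus $\Psi$, so the bound follows from $\Psi$ being bounded below. For \eqref{supro:U-prox} and \eqref{supro:U-hier}, the same comparison works after noting that $\mathcal{H}_\alpha\ge0$ when $\alpha>0$. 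When $\alpha<0$, I would instead bound $\mathcal{H}_\alpha$ using the $\gamma$-term and the boundedness of $Y^k$, in the same way as in the proof of Lemma \ref{lem:Flambda-suffidescent}. This yields boundedness of $\{\bar\sigma_k\}$.
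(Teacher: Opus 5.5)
Your treatment of (i)--(iv) and of the boundedness of $\{(X^k,Y^k)\}$ matches the paper's argument. The problems are confined to the parameter bounds in (v).

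\textbf{The bound on $\bar\mu_k$.} No hypothesis on $\mu_k^0$ is needed, let alone an ``unavoidable'' one. The cap $\mu_k\leftarrow\min\{\mu_k,\mu_k^{\max}\}$ in Step (2a) is applied before every computation of $U$, including the first. So the accepted value always satisfies $\bar\mu_k\le\mu_k^{\max}=(\alpha+2\gamma\rho)\|Y^k\|^2+c$, whatever $\mu_k^0$ is. As written, you prove a weaker statement than the one claimed.

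\textbf{The bound on $\bar\sigma_k$: this is the genuine gap.} While $\mu_k<\mu_k^{\max}$, each failed trial multiplies $\sigma_k$ by $\tau$ with no cap at all. Acceptance can occur in this phase. It can also occur at the first trial with $\mu_k=\mu_k^{\max}$, before $\sigma_k^{\max}$ has ever been applied, so $\bar\sigma_k$ may even exceed $\sigma_k^{\max}$. Your bound $\max\{\sigma^{\max},\tau\sigma^{\max}_{\mathrm{prev}},\sigma_k^{\max}\}$ does not control this; the undefined quantity $\sigma^{\max}_{\mathrm{prev}}$ is exactly where the difficulty hides.

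The missing idea is a uniform count of these trials. Since $\mu_k^0\ge\mu^{\min}$ and $\mu_k$ grows by the factor $\tau$ per failure, the cap is reached after at most $n_k\le\max\{1,\lfloor(\log\mu_k^{\max}-\log\mu^{\min})/\log\tau+2\rfloor\}$ trials. This is bounded in $k$ because $\{\mu_k^{\max}\}$ is bounded, so $\bar\sigma_k\le\sigma^{\max}\tau^{n_k}$ in that case. Note that the work is done by the given lower bound $\mu^{\min}$, not by an upper bound on $\mu_k^0$.

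In the remaining case $\bar\sigma_k\le\sigma_k^{\max}$, and your detour through the subproblem is unnecessary. Your claim that the capping $U$ need not be the accepted iterate is false. Once $\mu_k=\mu_k^{\max}$, Step (2d) only sends the algorithm back to Step (2b), so $U$ is never recomputed. Hence the $U$ defining $\sigma_k^{\max}$ \emph{is} the accepted $X^{k+1}$, and $\sigma_k^{\max}=(\alpha+2\gamma\rho)\|X^{k+1}\|^2+c$ is bounded by the boundedness of the iterates. This is how the paper concludes. Your subproblem comparison could be completed: at $\mu_k=\mu_k^{\max}$ the smooth part of each $U$-subproblem is $c$-strongly convex, since $\alpha+\gamma\rho\ge 0$. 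But your $\alpha<0$ case is only sketched, and the whole argument is avoidable.
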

\begin{proof}
\textit{Statement (i)}. The desired result follows from \eqref{eq:omega>flambda} and the discussions that follow.

\textit{Statement (ii)}. By Lemma \ref{lem:A-welldefined}, the sequence $\{(X^k,\,Y^k)\}$ is well-defined. Moreover, from the line search criterion \eqref{eq:A-linesearch}, we have
\begin{equation*}
\mathcal{F}_{\lambda}(X^{k+1},\,Y^{k+1})-\mathcal{R}_{k}
\leq -\frac{c}{2}\big(\|X^{k+1}-X^{k}\|_F^2 + \|Y^{k+1}-Y^{k}\|_F^2\big), \quad \forall\,k\geq 0.
\end{equation*}
Combining this inequality with the updating rule of $\mathcal{R}_k$ in \eqref{eq:omegadefinition} and $p_{k+1}\in[p_{\min},1]$, we obtain
\begin{equation}\label{omegaupdate}
\begin{aligned}
\mathcal{R}_{k+1}
&=(1-p_{k+1})\mathcal{R}_{k} + p_{k+1}\mathcal{F}_{\lambda}(X^{k+1},\,Y^{k+1}) \\
&\leq(1-p_{k+1})\mathcal{R}_{k} + p_{k+1}\left(\mathcal{R}_{k}
-\frac{c}{2}\big(\|X^{k+1}-X^{k}\|_F^2+\|Y^{k+1}-Y^{k}\|_F^2\big)\right) \\
&\leq\mathcal{R}_{k}-\frac{cp_{\min}}{2}\big(\|X^{k+1}-X^{k}\|_F^2 
+ \|Y^{k+1}-Y^{k}\|_F^2\big),
\end{aligned}
\end{equation}
for all $k\geq0$. Hence, $\{\mathcal{R}_k\}$ is a non-increasing. Moreover, it follows from Assumption \ref{assumA}(i) and $\{(X^k,\,Y^k)\}\subseteq {\rm dom}\,\mathcal{F}_\lambda$ that $\{\mathcal{F}_\lambda(X^k,\,Y^k)\}$ is bounded from below. Thus, $\{\mathcal{R}_k\}$ is also bounded from below by statement (i). Therefore, $\{\mathcal{R}_k\}$ is convergent and the limit $\zeta:= \lim\limits_{k\to\infty}\mathcal{R}_k$ exists.

\textit{Statement (iii)}. Using the updating rule of $\{\mathcal{R}_k\}$ in \eqref{eq:omegadefinition} again, we see that
\begin{equation*}
\mathcal{F}_\lambda(X^{k+1},\,Y^{k+1}) = 
\mathcal{R}_{k} + {\textstyle\frac{1}{p_{k+1}}}(\mathcal{R}_{k+1}-\mathcal{R}_{k}), 
\quad \forall\,k\geq 0.
\end{equation*}
Then, by taking the limit as $k\to\infty$ and using statement (ii) together with $0<p_{\min}\leq p_{k+1}\leq1$, we conclude that $\{\mathcal{F}_\lambda(X^k,\,Y^k)\}$ converges to the same limit $\zeta$ as $\{\mathcal{R}_k\}$.

\textit{Statement (iv)}.
It follows from \eqref{omegaupdate} that
\begin{equation*}
\|X^{k+1}-X^{k}\|_{F}^2 + \|Y^{k+1}-Y^{k}\|_{F}^2
\leq {\textstyle\frac{2}{cp_{\min}}}(\mathcal{R}_k-\mathcal{R}_{k+1}), 
\quad \forall\,k\geq 0.
\end{equation*}
Since $\{\mathcal{R}_k\}$ converges, it follows that
\begin{equation*}
\lim\limits_{k\to\infty}\|X^{k+1}-X^{k}\|_{F}=0 
\quad \mathrm{and} \quad
\lim\limits_{k\to\infty}\|Y^{k+1}-Y^{k}\|_{F}=0.
\end{equation*}

\textit{Statement (iv)}. 
From statements (i) and (ii), together with $\mathcal{R}_0=\mathcal{F}_{\lambda}(X^0,\,Y^0)$, we obtain
\begin{equation}\label{eq:sublevelset}
\mathcal{F}_\lambda(X^k, \,Y^k) \leq \mathcal{R}_k 
\leq \mathcal{R}_0 = \mathcal{F}_\lambda(X^0, Y^0), 
\quad \forall\,k\geq 0.
\end{equation}
Since Assumption \ref{assumA}(iv) ensures that $\mathcal{F}_{\lambda}$ is level-bounded for any nonnegative $\lambda\geq0$, the sequence $\{(X^{k}, \,Y^{k})\}$ is bounded. Moreover, from Step 2 of Algorithm \ref{algo-A-NAUM}, we know that $\bar{\mu}_k \leq \mu_k^{\max} = (\alpha + 2\gamma\rho)\|Y^k\|^2 + c$ for all $k\geq 0$. Since $\{Y^k\}$ is bounded, both $\{\mu_k^{\max}\}$ and $\{\bar{\mu}_k\}$ are also bounded. Finally, we show that $\{\bar{\sigma}_k\}$ is also bounded. Indeed, at the $k$th iteration, three cases may occur:
\begin{itemize}
\item $\bar{\mu}_k < \mu_k^{\max}$: In this case, we have $\bar{\sigma}_k \leq \sigma_k^0 \tau^{\bar{n}_k} \leq \sigma^{\max} \tau^{\bar{n}_k}$, where $\bar{n}_k$ denotes the number of inner iterations for the line search at the $k$th iteration and $\bar{n}_k \leq \max\left\{1, \left\lfloor \frac{\log(\mu_{k}^{\max}) - \log(\mu^{\min})}{\log\tau} + 2 \right\rfloor\right\}$ (see \eqref{eq:etak-bound} and the discussions preceding it).

\item $\bar{\mu}_k = \mu_k^{\max}$ and $\bar{\sigma}_k > \sigma_k^{\max}$: In this case, we have $\bar{\sigma}_k \leq \sigma_k^0 \tau^{\bar{n}_k} \leq \sigma^{\max} \tau^{\bar{n}_k}$, where $\bar{n}_k \leq \max\left\{1, \left\lfloor\frac{\log(\mu_{k}^{\max}) - \log(\mu^{\min})}{\log\tau} + 2 \right\rfloor\right\}$.

\item Otherwise, we have $\bar{\sigma}_k \leq \sigma_k^{\max} = (\alpha + 2\gamma\rho)\|X^{k+1}\|^2 + c$.
\end{itemize}
Note that $\{\bar{n}_k\}$ is bounded as $\{\mu_k^{\max}\}$ is bounded. Thus, $\{\bar{\sigma}_k\}$ is bounded as the sequences $\{X^k\}$ and $\{\bar{n}_k\}$ are bounded. This completes the proof.
\end{proof}

We are now ready to establish the global subsquential convergence for A-NAUM.

\begin{theorem}\label{thm:A-convergence}
Suppose that Assumption \ref{assumA} holds. Let $\{(X^{k},\,Y^{k})\}$ be the sequence generated by the A-NAUM in Algorithm \ref{algo-A-NAUM}, let $\Gamma$ denote the set of all cluster points of $\{(X^{k},\,Y^{k})\}$, let $\zeta$ be the limit value defined in Proposition \ref{prop:Omegak-descentbehavior}(ii), and let $\mathcal{S}$ denote the set of all stationary points of $\mathcal{F}_{\lambda}$. Then, $\Gamma\subseteq\mathcal{S}$ and $\mathcal{F}_\lambda \equiv \zeta$ on $\Gamma$.
\end{theorem}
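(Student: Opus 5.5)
Fix an arbitrary cluster point $(\widetilde X,\widetilde Y)\in\Gamma$ and a subsequence $(X^{k_j},Y^{k_j})\to(\widetilde X,\widetilde Y)$. Since $\|X^{k+1}-X^k\|_F\to0$ and $\|Y^{k+1}-Y^k\|_F\to0$ by Proposition~\ref{prop:Omegak-descentbehavior}(iv), we also have $(X^{k_j+1},Y^{k_j+1})\to(\widetilde X,\widetilde Y)$. Let $Z^{k_j}$ be given by \eqref{Zkupdate}. Then $Z^{k_j}\to\widetilde Z$, with $\widetilde Z$ as in \eqref{eq:Zbar=}. Along the subsequence, the parameters $\bar\mu_{k_j}$ and $\bar\sigma_{k_j}$ are bounded by Proposition~\ref{prop:Omegak-descentbehavior}(v).

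The plan is to show that $(\widetilde X,\widetilde Y,\widetilde Z)$ is a stationary point of $\Theta_{\alpha,\beta,\lambda}$ and then apply Theorem~\ref{thm:equal-sta}(i). Since the $Z$-part of $\Theta$ is smooth, $\partial\Theta(X,Y,Z)=\partial\Psi(X)\times\partial\Phi(Y)\times\{\nabla_Z\}$ plus the smooth gradients.

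\textbf{Optimality conditions.} From \eqref{supro:Uopt-prox}, \eqref{supro:Uopt-proxlinear} or \eqref{supro:Uopt-hier} at iteration $k_j$, we obtain an element $D_X^{j}\in\partial\Psi(X^{k_j+1})$. For the hierarchical variants, we argue column-wise with $\partial\psi_i$ and use $\partial\Psi=\prod_i\partial\psi_i$. The element is
\[
D_X^j=-\alpha(\cdots)Y^{k_j}-\lambda(X^{k_j+1}-Y^{k_j})-\bar\mu_{k_j}(X^{k_j+1}-X^{k_j}).
\]
The factor $(\cdots)$ converges to $\widetilde X\widetilde Y^\top-\widetilde Z$ in all three schemes, because the mixed old and new columns all converge to the same limit. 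Since $\bar\mu_{k_j}$ is bounded and the successive differences vanish, $D_X^j\to-\alpha(\widetilde X\widetilde Y^\top-\widetilde Z)\widetilde Y-\lambda(\widetilde X-\widetilde Y)=-\nabla_X(\Theta-\Psi-\Phi)(\widetilde X,\widetilde Y,\widetilde Z)$. The analogous statement holds for $D_Y^j\in\partial\Phi(Y^{k_j+1})$. The $Z$-condition $\nabla_Z\Theta(\widetilde X,\widetilde Y,\widetilde Z)=0$ holds exactly by the construction of \eqref{eq:Zbar=}, as in Theorem~\ref{thm:equal-sta}(ii).

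\textbf{Main obstacle.} To pass to the limit via \eqref{robust}, we need $\Psi(X^{k_j+1})\to\Psi(\widetilde X)$ and $\Phi(Y^{k_j+1})\to\Phi(\widetilde Y)$, and $\Psi,\Phi$ are only lower semicontinuous. Lower semicontinuity gives $\liminf\ge$. For the reverse inequality, use the minimizing property of the subproblem: compare the subproblem objective at $U=X^{k_j+1}$ with its value at the feasible point $\widetilde X$. This gives
\[
\Psi(X^{k_j+1})+\langle\text{smooth terms}\rangle+\tfrac{\bar\mu_{k_j}}2\|X^{k_j+1}-X^{k_j}\|_F^2\le\Psi(\widetilde X)+\text{smooth terms at }\widetilde X+\tfrac{\bar\mu_{k_j}}2\|\widetilde X-X^{k_j}\|_F^2.
\]
All smooth and proximal terms converge to matching limits by boundedness of $\bar\mu_{k_j}$, so $\limsup\Psi(X^{k_j+1})\le\Psi(\widetilde X)$. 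In the hierarchical case the same comparison is done column by column with $\psi_i$, replacing only the $i$-th column by $\tilde{\bm x}_i$. For $\Phi$ the argument is identical, using the subproblem for $V$ with $U=X^{k_j+1}$ fixed.

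With continuity of $\Psi$ and $\Phi$ along the subsequence, \eqref{robust} gives $0\in\partial\Theta_{\alpha,\beta,\lambda}(\widetilde X,\widetilde Y,\widetilde Z)$. Theorem~\ref{thm:equal-sta}(i) then yields $(\widetilde X,\widetilde Y)\in\mathcal S$.

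\textbf{Value on $\Gamma$.} The same continuity shows $\mathcal F_\lambda(X^{k_j+1},Y^{k_j+1})\to\mathcal F_\lambda(\widetilde X,\widetilde Y)$, since the coupling terms of $\mathcal F_\lambda$ are continuous. By Proposition~\ref{prop:Omegak-descentbehavior}(iii) this limit equals $\zeta$, so $\mathcal F_\lambda\equiv\zeta$ on $\Gamma$.
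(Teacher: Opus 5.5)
Your proposal is correct and follows essentially the same route as the paper. You compare the subproblem objective at $X^{k_j+1}$ with its value at $\widetilde X$ (column-wise in the hierarchical case) to get $\Psi(X^{k_j+1})\to\Psi(\widetilde X)$ and $\Phi(Y^{k_j+1})\to\Phi(\widetilde Y)$, pass to the limit in the optimality conditions via \eqref{robust}, invoke Theorem~\ref{thm:equal-sta}(i), and finish with Proposition~\ref{prop:Omegak-descentbehavior}(iii), just as the paper does. The one step you leave implicit, which the paper establishes first, is that $\widetilde X$ is really ``feasible'', i.e., $(\widetilde X,\widetilde Y)\in{\rm dom}\,\mathcal F_\lambda$. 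This follows from lower semicontinuity and $\mathcal F_\lambda(X^k,Y^k)\le\mathcal R_k\le\mathcal R_0=\mathcal F_\lambda(X^0,Y^0)$. Without it, your bound $\limsup_j\Psi(X^{k_j+1})\le\Psi(\widetilde X)$ is vacuous and \eqref{robust} cannot be applied at $\widetilde X$.
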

\begin{proof}
We first claim that $\Gamma\subseteq {\rm dom}\,\mathcal{F}_\lambda$. By Proposition \ref{prop:Omegak-descentbehavior}(v), the sequence $\{(X^k,\,Y^k)\}$ is bounded and therefore admits at least one cluster point. Thus, $\Gamma$ is nonempty and compact. Take any $(\widetilde{X},\,\widetilde{Y})\in\Gamma$ and let $\{(X^{k_j},\,Y^{k_j})\}_{j\in\mathbb{N}}$ be a subsequence converging to $(\widetilde{X},\,\widetilde{Y})$. Since $\mathcal{F}_\lambda$ is lower semi-continuous (as $\Psi$ and $\Phi$ are lower semi-continuous by Assumption \ref{assumA}(i)), it follows from \eqref{eq:sublevelset} that
\begin{align*}
\mathcal{F}_\lambda(\widetilde{X}, \,\widetilde{Y})
\leq \liminf\limits_{j\to\infty}\,\mathcal{F}_\lambda(X^{k_j}, \,Y^{k_j})
\leq \lim_{j\to\infty}\,\mathcal{F}_\lambda(X^{k_j}, \,Y^{k_j})
\leq \mathcal{F}_\lambda(X^0, \,Y^0),
\end{align*}
which implies that $\mathcal{F}_\lambda(\widetilde{X},\,\widetilde{Y})<\infty$, and hence $(\widetilde{X},\,\widetilde{Y})\in{\rm dom}\,\mathcal{F}_\lambda$. From the updating rule of $Z^k$ in \eqref{Zkupdate}, we further have 
\begin{equation*}
\textstyle
Z^{k_j}\,\to\,\widetilde{Z}:=\left(\mathcal{I} - \frac{\beta}{\alpha+\beta}\mathcal{A}^*\mathcal{A}\right)(\widetilde{X}\widetilde{Y}^{\top}) + \frac{\beta}{\alpha+\beta}\mathcal{A}^*(\bm{b}), \quad \text{as} ~~j \to \infty.
\end{equation*}

We next show that $(\widetilde{X},\,\widetilde{Y},\,\widetilde{Z})$ is a stationary point of $\Theta_{\alpha,\beta,\lambda}$, i.e., $0\in\partial\Theta_{\alpha,\beta,\lambda}(\widetilde{X},\,\widetilde{Y},\,\widetilde{Z})$. By the definition of $\widetilde{Z}$ and $\mathcal{A}\mathcal{A}^*=\mathcal{I}_q$, we can obtain
\begin{equation}\label{eq:1thetaoptc}
\alpha (\widetilde{Z} - \widetilde{X}\widetilde{Y}^\top)
+ \beta \mathcal{A}^*(\mathcal{A}(\widetilde{Z}) - \bm{b}) = 0.
\end{equation}
It therefore suffices to verify that
\begin{numcases}{}
0\in \partial\Psi(\widetilde{X}) + \alpha(\widetilde{X}\widetilde{Y}^\top - \widetilde{Z})\widetilde{Y}+\lambda(\widetilde{X}-\widetilde{Y}),  \label{eq:1thetaopta} \\[3pt]
0 \in  \partial\Phi(\widetilde{Y}) + \alpha(\widetilde{X}\widetilde{Y}^\top - \widetilde{Z})^\top\widetilde{X}-\lambda(\widetilde{X}-\widetilde{Y}).  \label{eq:1thetaoptb}
\end{numcases}
We first show that
\begin{numcases}{}
\lim\limits_{j\to\infty}~\Psi(X^{k_j+1})=\Psi(\widetilde{X}),  \label{psi_xkj} \\
\lim\limits_{j\to\infty}~\Phi(Y^{k_j+1})=\Phi(\widetilde{Y}).  \label{phi_ykj}
\end{numcases}
Since $\|X^{k_{j}+1}-X^{k_{j}}\|_F\to 0$ by Proposition \ref{prop:Omegak-descentbehavior}(iv), the sequence $\{X^{k_{j}+1}\}$ also converges to $\widetilde{X}$. We now prove \eqref{psi_xkj} for the three updating schemes:
\begin{itemize}[leftmargin=0.5cm]
\item \textbf{Proximal:} Since $X^{k_{j}+1}$ solves \eqref{supro:U-prox} with $k=k_{j}$ and $\mu_k=\bar{\mu}_{k_{j}}$, we have
    \begin{align*}
    &\quad \Psi(X^{k_{j}+1}) 
    + \mathcal{H}_{\alpha}(X^{k_{j}+1},\,Y^{k_{j}},\,Z^{k_{j}})
    + {\textstyle\frac{\lambda}{2}}\|X^{k_{j}+1}-Y^{k_{j}}\|_F^2 
    + {\textstyle\frac{\bar{\mu}_{k_j}}{2}}\|X^{k_{j}+1}-X^{k_{j}}\|_F^2 \\
    &\leq \Psi(\widetilde{X}) 
    + \mathcal{H}_{\alpha}(\widetilde{X},\,Y^{k_{j}},\,Z^{k_{j}}) 
    + {\textstyle\frac{\lambda}{2}}\|\widetilde{X}-Y^{k_{j}}\|_F^2 
    + {\textstyle\frac{\bar{\mu}_{k_{j}}}{2}}\|\widetilde{X}-X^{k_{j}}\|_F^2,
    \quad \forall\,j\in\mathbb{N},
    \end{align*}
    which implies that
    \begin{align*}
    \Psi(X^{k_{j}+1})
    &\leq\Psi(\widetilde{X}) 
    + \mathcal{H}_{\alpha}(\widetilde{X},\,Y^{k_{j}},\,Z^{k_{j}}) 
    - \mathcal{H}_{\alpha}(X^{k_{j}+1},\,Y^{k_{j}},\,Z^{k_{j}})
    + {\textstyle\frac{\lambda}{2}}\|\widetilde{X}-Y^{k_{j}}\|_F^2  \\
    &\quad - {\textstyle\frac{\lambda}{2}}\|X^{k_{j}+1}-Y^{k_{j}}\|_F^2 
    + {\textstyle\frac{\bar{\mu}_{k_j}}{2}}\|\widetilde{X}-X^{k_{j}}\|_F^2
    - {\textstyle\frac{\bar{\mu}_{k_j}}{2}}\|X^{k_{j}+1}-X^{k_{j}}\|_F^2,
    \quad \forall\,j\in\mathbb{N}.
    \end{align*}
    Passing to the limit in the above relation, and invoking $\|X^{k_{j}+1} - X^{k_{j}}\|_F\to 0$, $X^{k_{j}}\to \widetilde{X}$, $X^{k_{j}+1}\to \widetilde{X}$, $Y^{k_{j}}\to \widetilde{Y}$, $Z^{k_{j}}\to \widetilde{Z}$, the continuity of $\mathcal{H}_{\alpha}$, and the boundedness of $\{\bar{\mu}_{k_j}\}$ (by Proposition \ref{prop:Omegak-descentbehavior}(v)), we obtain that $\limsup\limits_{j\to\infty}\,\Psi(X^{k_{j}+1})\leq \Psi(\widetilde{X})$. On the other hand, since $\Psi$ is lower semi-continuous (by Assumption \ref{assumA}(i)), we have that $\Psi(\widetilde{X})\leq \liminf\limits_{j\to\infty}\,\Psi(X^{k_{j}+1})$. Therefore, we obtain \eqref{psi_xkj}.

\item \textbf{Proximal-linear:} Since $X^{k_{j}+1}$ solves \eqref{supro:U-proxlinear} with $k=k_{j}$ and $\mu_k=\bar{\mu}_{k_{j}}$, we have
    \begin{align*}
    &\Psi(X^{k_{j}+1}) 
    + \langle\nabla_X\mathcal{H}_{\alpha}(X^{k_{j}}, Y^{k_{j}}, Z^{k_{j}}), X^{k_{j}+1}\!-\!X^{k_{j}}\rangle
    + {\textstyle\frac{\lambda}{2}}\|X^{k_{j}+1}\!-\!Y^{k_{j}}\|_F^2 
    + {\textstyle\frac{\bar{\mu}_{k_j}}{2}}\|X^{k_{j}+1}\!-\!X^{k_{j}}\|_F^2 \\
    &\leq\Psi(\widetilde{X}) 
    + \langle\nabla_X\mathcal{H}_{\alpha}(X^{k_{j}},Y^{k_{j}},Z^{k_{j}}),
    \widetilde{X} - X^{k_{j}}\rangle
    + {\textstyle\frac{\lambda}{2}}\|\widetilde{X}-Y^{k_{j}}\|_F^2 
    + {\textstyle\frac{\bar{\mu}_{k_j}}{2}}\|\widetilde{X}-X^{k_{j}}\|_F^2,
    \quad \forall\,j\in\mathbb{N},
    \end{align*}
    which implies that
    \begin{align*}
    \Psi(X^{k_{j}+1})
    &\leq \Psi(\widetilde{X}) 
    + \langle\nabla_X\mathcal{H}_{\alpha}(X^{k_{j}},Y^{k_{j}},Z^{k_{j}}),
    \,\widetilde{X}-X^{k_{j}+1}\rangle 
    + {\textstyle\frac{\lambda}{2}}\|\widetilde{X}\!-\!Y^{k_{j}}\|_F^2 
    - {\textstyle\frac{\lambda}{2}}\|X^{k_{j}+1}\!-\!Y^{k_{j}}\|_F^2 \\
    &\quad + {\textstyle\frac{\bar{\mu}_{k_j}}{2}}\|\widetilde{X}-X^{k_{j}}\|_F^2
    - {\textstyle\frac{\bar{\mu}_{k_j}}{2}}\|X^{k_{j}+1}-X^{k_{j}}\|_F^2,
    \quad \forall\,j\in\mathbb{N}.
    \end{align*}
    Thus, using similar arguments as above, together with the continuity of $\nabla_X\mathcal{H}_{\alpha}$, gives \eqref{psi_xkj}. 

\item \textbf{Hierarchical-prox:} Since $\bm{x}_i^{k_{j}+1}$ solves the subproblem \eqref{supro:U-hier} for $i=1,2,\ldots,r$ with $k=k_{j}$ and $\mu_k=\bar{\mu}_{k_{j}}$, we have
    \begin{align*}
    &\quad \psi_i(\bm{x}_i^{k_{j}+1}) 
    + \mathcal{H}_{\alpha}(\bm{x}^{k_{j}+1}_{t<i},\bm{x}_i^{k_{j}+1}, \bm{x}_{t>i}^{k_{j}},Y^{k_{j}},Z^{k_{j}}) 
    + {\textstyle\frac{\lambda}{2}}\|\bm{x}_i^{k_{j}+1}-\bm{y}_i^{k_{j}}\|_F^2 
    + {\textstyle\frac{\bar{\mu}_{k_{j}}}{2}}\|\bm{x}_i^{k_{j}+1} - \bm{x}_i^{k_{j}}\|_F^2\\
    &\leq \psi_i(\tilde{\bm{x}}_i) 
    + \mathcal{H}_{\alpha}(\bm{x}^{k_{j}+1}_{t<i},\tilde{\bm{x}}_i, \bm{x}_{t>i}^{k_{j}}, Y^{k_{j}}, Z^{k_{j}}) 
    + {\textstyle\frac{\lambda}{2}}\|\tilde{\bm{x}}_i-\bm{y}_i^{k_{j}}\|_F^2 
    + {\textstyle\frac{\bar{\mu}_{k_{j}}}{2}}\|\tilde{\bm{x}}_i-\bm{x}_i^{k_{j}}\|_F^2,
    \end{align*}
    which implies that
    \begin{align*}
    \psi_i(\bm{x}_i^{k_{j}+1})
    &\leq\psi_i(\tilde{\bm{x}}_i) 
    + \mathcal{H}_{\alpha}(\bm{x}^{k_{j}+1}_{t<i}, \tilde{\bm{x}}_i, \bm{x}_{t>i}^{k_{j}}, Y^{k_{j}}, Z^{k_{j}})
    - \mathcal{H}_{\alpha}(\bm{x}^{k_{j}+1}_{t<i}, \bm{x}_i^{k_{j}+1}, \bm{x}_{t>i}^{k_{j}}, Y^{k_{j}}, Z^{k_{j}}) \\
    &\quad 
    + {\textstyle\frac{\lambda}{2}}\|\tilde{\bm{x}}_i-\bm{y}_i^{k_{j}}\|_F^2
    - {\textstyle\frac{\lambda}{2}}\|\bm{x}_i^{k_{j}+1}-\bm{y}_i^{k_{j}}\|_F^2 
    + {\textstyle\frac{\bar{\mu}_{k_{j}}}{2}}\|\tilde{\bm{x}}_i-\bm{x}_i^{k_{j}}\|_F^2
    - {\textstyle\frac{\bar{\mu}_{k_{j}}}{2}}\|\bm{x}_i^{k_{j}+1} - \bm{x}_i^{k_{j}}\|_F^2.
    \end{align*}
    Similarly, one can verify that $\lim\limits_{j\to\infty}\psi_i(\bm{x}_i^{k_{j}+1})=\psi_i(\tilde{\bm{x}}_i)$ for $i=1,2,\ldots,r$, which implies \eqref{psi_xkj}. 
\end{itemize}
By an entirely analogous argument, we can obtain \eqref{phi_ykj}. We now prove \eqref{eq:1thetaopta} as follows.
\begin{itemize}[leftmargin=0.5cm]
\item \textbf{Proximal\,\&\,prox-linear:} Passing to the limit along $\{(X^{k_{j}},\,Y^{k_{j}})\}$ in \eqref{supro:Uopt-prox} or \eqref{supro:Uopt-proxlinear} with $X^{k_{j}+1}$ in place of $U$ and $\bar{\mu}_{k_{j}}$ in place of $\mu_k$, and invoking $\|X^{k_{j}+1} - X^{k_{j}}\|_F\to 0$ (by Proposition \ref{prop:Omegak-descentbehavior}(iv)), $X^{k_{j}}\to \widetilde{X}$, $X^{k_{j}+1}\to \widetilde{X}$, $Y^{k_{j}}\to \widetilde{Y}$, $Z^{k_{j}}\to \widetilde{Z}$, the boundedness of $\{\bar{\mu}_{k_{j}}\}$ (by Proposition \ref{prop:Omegak-descentbehavior}(v)), \eqref{psi_xkj}, \eqref{robust}, and $(\widetilde{X},\,\widetilde{Y}) \in {\rm dom}\,\mathcal{F}_\lambda$, we obtain \eqref{eq:1thetaopta}.

\item \textbf{Hierarchical-prox:} 
Similarly, we have
\begin{equation*}
0 \in \partial \psi_i(\tilde{\bm{x}}_i) + \alpha (\widetilde{X}\widetilde{Y}^\top - \widetilde{Z}) \tilde{\bm{y}}_i+\lambda(\tilde{\bm{x}}_i-\tilde{\bm{y}}_i),
\quad \forall\,i = 1, 2, \ldots, r,
\end{equation*}
which implies \eqref{eq:1thetaopta} upon stacking.
\end{itemize}
The inclusion \eqref{eq:1thetaoptb} follows similarly. Combining \eqref{eq:1thetaoptc}, \eqref{eq:1thetaopta}, and \eqref{eq:1thetaoptb}, we conclude that $(\widetilde{X},\,\widetilde{Y},\,\widetilde{Z})$ is a stationary point of $\Theta_{\alpha,\beta,\lambda}$. Then, invoking Assumption \ref{assumA}(ii)\&(iii) and the definition of $\widetilde{Z}$, it follows from Theorem \ref{thm:equal-sta} that $(\widetilde{X},\,\widetilde{Y})$ is a stationary point of $\mathcal{F}_\lambda$ (i.e., $(\widetilde{X}, \widetilde{Y})\in\mathcal{S}$).

Finally, using \eqref{psi_xkj}, \eqref{phi_ykj}, and Proposition \ref{prop:Omegak-descentbehavior}(iii)\&(iv), we obtain
\begin{equation*}
\zeta
=\lim\limits_{j\to\infty}\,\mathcal{F}_\lambda(X^{k_{j}+1},\,Y^{k_{j}+1})
=\mathcal{F}_\lambda(\widetilde{X},\,\widetilde{Y}).
\end{equation*}
Since $(\widetilde{X},\,\widetilde{Y})\in\Gamma$ is arbitrary, we have $\Gamma\subseteq\mathcal{S}$ and $\mathcal{F}_\lambda\equiv\zeta$ on $\Gamma$. This completes the proof.
\end{proof}

\subsection{Convergence analysis II: global convergence and convergence rate}\label{sec-global}

In this section, we further study the convergence property of the whole sequence $\{(X^k,\,Y^k)\}$ generated by A-NAUM and establish the convergence rate of $\{(X^k,\,Y^k)\}$ under the Kurdyka-{\L}ojasiewicz (KL) property and its associated exponent. The analysis is inspired by recent advances on average-type nonmonotone line search techniques \cite{qtpq2025convergence,yhl2025nonmonotone}, but is more involved due to the block alternating structure of the proposed algorithm.

We first present the following technical lemma, whose proof is straightforward and is provided in Appendix~\ref{proof:lem:dist-inequa} for completeness.


\begin{lemma}\label{lem:dist-inequa}
Suppose that Assumption \ref{assumA} holds. Let $\{(X^k, \,Y^k)\}$ be the sequence generated by the A-NAUM in Algorithm \ref{algo-A-NAUM}. Then, there exists some $d>0$ such that
\begin{equation}\label{eq:dist-inequa}
\operatorname{dist}\big(0, \,\partial\mathcal{F}_{\lambda}(X^k,\,Y^k)\big) 
\leq d\big( \|X^k - X^{k-1}\|_F + \|Y^k - Y^{k-1}\|_F \big),
\quad \forall\,k\geq 0.
\end{equation}
\end{lemma}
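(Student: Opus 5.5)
The plan is to build an explicit element of $\partial\mathcal{F}_\lambda(X^k,Y^k)$ from the optimality conditions of the subproblems that produced $(X^k,Y^k)=(U,V)$ at iteration $k-1$, and then bound its norm by the displacements $\|X^k-X^{k-1}\|_F$ and $\|Y^k-Y^{k-1}\|_F$. For $k\ge1$ this is direct. For $k=0$ the right-hand side involves an undefined $X^{-1}$; I would read the claim for $k\ge1$, or adopt the convention that it is vacuous there.

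Since $\mathcal{F}_\lambda=\Psi(X)+\Phi(Y)+G(X,Y)$ with $G$ smooth, separable calculus gives
\[
\partial\mathcal{F}_\lambda(X,Y)=\big(\partial\Psi(X)+\nabla_XG(X,Y)\big)\times\big(\partial\Phi(Y)+\nabla_YG(X,Y)\big),
\]
where
\[
\nabla_XG=\mathcal{A}^*(\mathcal{A}(XY^\top)-\bm b)\,Y+\lambda(X-Y),\qquad
\nabla_YG=\big(\mathcal{A}^*(\mathcal{A}(XY^\top)-\bm b)\big)^\top X-\lambda(X-Y).
\]
The key identity links $\alpha(XY^\top-Z)$ to $\mathcal{A}^*(\mathcal{A}(XY^\top)-\bm b)$ when $Z$ is given by \eqref{Zkupdate}. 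Using $\frac1\alpha+\frac1\beta=1$, i.e.\ $\frac{\alpha\beta}{\alpha+\beta}=1$, we get
\[
\alpha\big(XY^\top-Z(X,Y)\big)=\tfrac{\alpha\beta}{\alpha+\beta}\,\mathcal{A}^*\big(\mathcal{A}(XY^\top)-\bm b\big)=\mathcal{A}^*\big(\mathcal{A}(XY^\top)-\bm b\big).
\]
This identity is exactly the one used in Theorem \ref{thm:equal-sta}.

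Take the proximal scheme first. By \eqref{supro:Uopt-prox} with $U=X^k$ and $\mu=\bar\mu_{k-1}$, there is $D_X\in\partial\Psi(X^k)$ such that
\[
D_X=-\alpha\big(X^k(Y^{k-1})^\top-Z^{k-1}\big)Y^{k-1}-\lambda(X^k-Y^{k-1})-\bar\mu_{k-1}(X^k-X^{k-1}).
\]
Then $D_X+\nabla_XG(X^k,Y^k)$ equals $\alpha(X^kY^{k\top}-Z(X^k,Y^k))Y^k-\alpha(X^kY^{k-1\top}-Z^{k-1})Y^{k-1}$, plus $\lambda(Y^{k-1}-Y^k)$, minus $\bar\mu_{k-1}(X^k-X^{k-1})$. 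Write $Z(X,Y)$ for the map in \eqref{Zkupdate}; it is affine with linear part of norm $\sqrt\rho$. The differences of the products $XY^\top$ are then Lipschitz on bounded sets. By Proposition \ref{prop:Omegak-descentbehavior}(v) the iterates and $\{\bar\mu_k\},\{\bar\sigma_k\}$ are bounded, so all these terms are bounded by a constant times $\|X^k-X^{k-1}\|_F+\|Y^k-Y^{k-1}\|_F$. The $Y$-block is handled the same way using \eqref{supro:Vopt-prox}, where $U=X^k$, $V=Y^k$, and the coefficient is $\bar\sigma_{k-1}$.

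For the prox-linear scheme the evaluation points shift (e.g.\ $X^{k-1}(Y^{k-1})^\top$ in place of $X^k(Y^{k-1})^\top$), which changes nothing essential. The hierarchical scheme is the main bookkeeping obstacle. Column $i$ of the optimality condition \eqref{supro:Uopt-hier} involves the mixed matrix $\sum_{j\le i}\bm x^k_j\bm y_j^{k-1\top}+\sum_{j>i}\bm x^{k-1}_j\bm y_j^{k-1\top}$. Its deviation from $X^kY^{k-1\top}$ is bounded by $\|X^k-X^{k-1}\|_F\|Y^{k-1}\|$, so stacking the columns (using separability of $\Psi$, so that $\partial\Psi=\prod_i\partial\psi_i$) gives the same bound. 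Taking $d$ as the maximum constant over both blocks and all nine scheme combinations completes the argument.
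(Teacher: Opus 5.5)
Your proposal is correct and follows essentially the same route as the paper. Both arguments use the identity $\alpha\big(X^k(Y^k)^\top-Z^k\big)=\mathcal{A}^*\big(\mathcal{A}(X^k(Y^k)^\top)-\bm b\big)$ obtained from $\frac{1}{\alpha}+\frac{1}{\beta}=1$. Both then take the subproblem optimality conditions from iteration $k-1$ with $\bar\mu_{k-1},\bar\sigma_{k-1}$, bound $\|Z^k-Z^{k-1}\|_F$ through the factor $\sqrt{\rho}$, and rely on the boundedness in Proposition~\ref{prop:Omegak-descentbehavior}(v). Your remark that the inequality only makes sense for $k\geq 1$ is also apt, since the paper's proof likewise invokes iteration $k-1$.
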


We next establish the global convergence of the whole sequence $\{(X^k,\,Y^k)\}$.

\begin{theorem}\label{thm:fullconverge}
Suppose that Assumption \ref{assumA} holds. Let $\{(X^k, \,Y^k)\}$ be the sequence generated by the A-NAUM in Algorithm \ref{algo-A-NAUM}. If $\mathcal{F}_\lambda$ is a KL function, then the sequence $\{(X^k,\,Y^k)\}$ converges to a stationary point of problem \eqref{quadpenaltypro}.
\end{theorem}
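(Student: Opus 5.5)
The plan is a KL-based finite-length argument in which the reference sequence $\{\mathcal{R}_k\}$ plays the role of a merit value. This follows the average-type nonmonotone analyses in \cite{qtpq2025convergence,yhl2025nonmonotone}. For brevity write $\Delta_k:=\|X^{k}-X^{k-1}\|_F+\|Y^{k}-Y^{k-1}\|_F$.

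\textbf{Setup.} By Proposition~\ref{prop:Omegak-descentbehavior}, $\mathcal{R}_k\downarrow\zeta$ and $\mathcal{F}_\lambda(X^k,Y^k)\to\zeta$. By \eqref{omegaupdate},
\[
\mathcal{R}_k-\mathcal{R}_{k+1}\ge \tfrac{cp_{\min}}{4}\Delta_{k+1}^2 .
\]
The cluster set $\Gamma$ is nonempty and compact, $\mathcal{F}_\lambda\equiv\zeta$ on $\Gamma$, and $\Gamma\subseteq\mathcal{S}$ (Theorem~\ref{thm:A-convergence}). Moreover $\operatorname{dist}((X^k,Y^k),\Gamma)\to0$.

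\textbf{Trivial case.} If $\mathcal{R}_K=\zeta$ for some $K$, monotonicity forces $\mathcal{R}_k\equiv\zeta$ for $k\ge K$. The displayed inequality then gives $\Delta_{k}=0$ for $k>K$, so the sequence is eventually constant. Assume henceforth $\mathcal{R}_k>\zeta$ for all $k$.

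\textbf{Uniformized KL.} Apply Proposition~\ref{uniKL} to $\mathcal{F}_\lambda$ on $\Gamma$ to get $\varepsilon,\nu,\varphi$. The main obstacle is that KL controls $\mathcal{F}_\lambda(X^k,Y^k)-\zeta$, whereas descent holds only for $\mathcal{R}_k$, and $\mathcal{F}_\lambda(X^k,Y^k)$ may lie below $\zeta$ or be nonmonotone.

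To handle this, use the identity
\[
\mathcal{F}_\lambda(X^{k},Y^{k})-\zeta=\mathcal{R}_{k-1}-\zeta-\tfrac{1}{p_k}(\mathcal{R}_{k-1}-\mathcal{R}_k).
\]
It yields $\mathcal{F}_\lambda(X^k,Y^k)-\zeta\le \mathcal{R}_{k-1}-\zeta$. On iterations where $\mathcal{F}_\lambda(X^k,Y^k)>\zeta$, KL together with Lemma~\ref{lem:dist-inequa} and concavity of $\varphi$ gives
\[
1\le \varphi'\big(\mathcal{F}_\lambda(X^k,Y^k)-\zeta\big)\,d\,\Delta_k\le \varphi'(\mathcal{R}_{k-1}-\zeta)\,d\,\Delta_k,
\]
since $\varphi'$ is nonincreasing.

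\textbf{Finite length.} Combine with
\[
\varphi(\mathcal{R}_{k-1}-\zeta)-\varphi(\mathcal{R}_{k}-\zeta)\ge\varphi'(\mathcal{R}_{k-1}-\zeta)(\mathcal{R}_{k-1}-\mathcal{R}_k)\ge \tfrac{cp_{\min}}{4}\,\frac{\Delta_{k}^2}{d\,\Delta_k}.
\]
Note $\Delta_k$ here versus $\Delta_{k}$ from $\mathcal{R}_{k-1}-\mathcal{R}_k\ge \tfrac{cp_{\min}}{4}\Delta_k^2$; the indices match. This gives $\Delta_k\le \tfrac{4d}{cp_{\min}}[\varphi(\mathcal{R}_{k-1}-\zeta)-\varphi(\mathcal{R}_k-\zeta)]$.

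Iterations with $\mathcal{F}_\lambda(X^k,Y^k)\le\zeta$ need separate handling. There the identity gives
\[
\mathcal{R}_{k-1}-\mathcal{R}_k\ge p_{\min}(\mathcal{R}_{k-1}-\zeta),
\]
i.e., geometric decrease of $\mathcal{R}_k-\zeta$, and $\Delta_k\le C\sqrt{\mathcal{R}_{k-1}-\mathcal{R}_k}\le C\sqrt{\mathcal{R}_{k-1}-\zeta}$. I would bound the sum of these terms by $\sqrt{\mathcal{R}_{k-1}-\zeta}-\sqrt{\mathcal{R}_k-\zeta}$ times a constant, using $\mathcal{R}_k-\zeta\le(1-p_{\min})(\mathcal{R}_{k-1}-\zeta)$. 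Equivalently, replace $\varphi$ by $\tilde\varphi(t)=\varphi(t)+C'\sqrt t$, which is still in $\Phi_\nu$. Then every iteration satisfies
\[
\Delta_k\le C''\big[\tilde\varphi(\mathcal{R}_{k-1}-\zeta)-\tilde\varphi(\mathcal{R}_k-\zeta)\big].
\]

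For large $k$ the iterates are $\varepsilon$-close to $\Gamma$ and $\mathcal{F}_\lambda(X^k,Y^k)<\zeta+\nu$. Telescoping gives $\sum_k\Delta_k<\infty$, so $\{(X^k,Y^k)\}$ is Cauchy. Its limit lies in $\Gamma\subseteq\mathcal{S}$, i.e., it is a stationary point of \eqref{quadpenaltypro}. The delicate step I expect to verify carefully is exactly this case split, namely that the geometric-decrease case produces a summable, telescoping bound compatible with the KL case.
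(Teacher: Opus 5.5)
\textbf{Genuine gap: the step} $\varphi'\big(\mathcal{F}_\lambda(X^k,Y^k)-\zeta\big)\le\varphi'(\mathcal{R}_{k-1}-\zeta)$ \textbf{is reversed.} You correctly have $\mathcal{F}_\lambda(X^k,Y^k)-\zeta\le\mathcal{R}_{k-1}-\zeta$. But $\varphi'$ is nonincreasing, so this gives $\varphi'(\mathcal{F}_\lambda(X^k,Y^k)-\zeta)\ge\varphi'(\mathcal{R}_{k-1}-\zeta)$, the opposite of what you need. The KL bound $\varphi'(\mathcal{F}_\lambda(X^k,Y^k)-\zeta)\ge 1/(d\Delta_k)$ therefore says nothing about $\varphi'(\mathcal{R}_{k-1}-\zeta)$; take $\varphi(t)=\sqrt t$ with $\mathcal{F}_\lambda(X^k,Y^k)-\zeta\ll\mathcal{R}_{k-1}-\zeta$. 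Consequently your estimate $\varphi(\mathcal{R}_{k-1}-\zeta)-\varphi(\mathcal{R}_k-\zeta)\ge\varphi'(\mathcal{R}_{k-1}-\zeta)(\mathcal{R}_{k-1}-\mathcal{R}_k)$ cannot be closed. Since $\mathcal{F}_\lambda(X^k,Y^k)\le\mathcal{R}_k$, the KL point lies below the whole interval $[\mathcal{R}_k,\mathcal{R}_{k-1}]$, so no concavity argument on that interval can use it. This is exactly the obstacle you flagged, and splitting at $\zeta$ does not remove it: the bad regime $\zeta<\mathcal{F}_\lambda(X^k,Y^k)\ll\mathcal{R}_{k-1}$ sits inside your KL case. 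As written, the KL case, and hence finite length, is not proved.

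\textbf{How the paper handles it.} The paper compares $\mathcal{F}_\lambda(X^k,Y^k)$ with a \emph{future} reference value $\mathcal{R}_{k+M}$. If $\mathcal{F}_\lambda(X^k,Y^k)>\mathcal{R}_{k+M}$, the KL point lies in $(\mathcal{R}_{k+M},\mathcal{R}_k]$, and concavity gives $\varphi(\mathcal{R}_k-\zeta)-\varphi(\mathcal{R}_{k+M}-\zeta)\ge\varphi'(\mathcal{F}_\lambda(X^k,Y^k)-\zeta)\,(\mathcal{F}_\lambda(X^k,Y^k)-\mathcal{R}_{k+M})$. Otherwise the averaging rule contracts $\mathcal{R}_k-\mathcal{R}_{k+M}$. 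A Jensen bound over windows of length $M$ (chosen from $p_{\min}$) then absorbs the backward terms.

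\textbf{A shorter repair within your framework.} Move the threshold of your case split from $\zeta$ to $\zeta+\tfrac12(\mathcal{R}_{k-1}-\zeta)$.
If $\mathcal{F}_\lambda(X^k,Y^k)-\zeta\le\tfrac12(\mathcal{R}_{k-1}-\zeta)$, the averaging rule gives $\mathcal{R}_k-\zeta\le(1-\tfrac{p_{\min}}{2})(\mathcal{R}_{k-1}-\zeta)$. Your square-root device then applies verbatim, and this case contains $\mathcal{F}_\lambda(X^k,Y^k)\le\zeta$.
Otherwise $\mathcal{F}_\lambda(X^k,Y^k)-\zeta>\tfrac12(\mathcal{R}_{k-1}-\zeta)>0$, so KL applies, and monotonicity now runs the right way: $\varphi'(\tfrac12(\mathcal{R}_{k-1}-\zeta))\ge\varphi'(\mathcal{F}_\lambda(X^k,Y^k)-\zeta)\ge 1/(d\Delta_k)$. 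Concavity of $t\mapsto\varphi(t/2)$, together with $\mathcal{R}_{k-1}-\mathcal{R}_k\ge\tfrac{cp_{\min}}{4}\Delta_k^2$, yields
\[
\Delta_k\le\tfrac{8d}{cp_{\min}}\big[\varphi\big(\tfrac12(\mathcal{R}_{k-1}-\zeta)\big)-\varphi\big(\tfrac12(\mathcal{R}_k-\zeta)\big)\big].
\]
With $\tilde\varphi(t)=\varphi(t/2)+C'\sqrt t$, your telescoping goes through. Your localization via $\operatorname{dist}((X^k,Y^k),\Gamma)\to0$ is valid and replaces the paper's induction that keeps the iterates in a ball. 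Combined, this would give a noticeably shorter proof than the paper's windowed argument.
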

\begin{proof}
In view of Theorem \ref{thm:A-convergence}, we have that $\lim\limits_{k\to\infty}\operatorname{dist}((X^k,Y^k),\Gamma)=0$ and $\Gamma\subseteq\mathcal{S}$, where $\Gamma$ is the set of all cluster points of $\{(X^{k},\,Y^{k})\}$ and $\mathcal{S}$ is the set of all stationary points of problem \eqref{quadpenaltypro}. Thus, we only need to show that the sequence $\{(X^k, Y^k)\}$ is convergent.

First, the fact that $\mathcal{F}_\lambda \equiv \zeta$ on $\Gamma$ (by Theorem \ref{thm:A-convergence}), together with the assumption that $\mathcal{F}_\lambda$ is a KL function, and the uniformized KL property (Proposition \ref{uniKL}), implies that there exist $\varepsilon>0$, $\nu>0$ and $\varphi \in \Phi_{\nu}$ such that
\begin{equation}\label{KL-inequa}
\varphi'\big(\mathcal{F}_\lambda(X,\,Y)-\zeta\big)\operatorname{dist}\big(0,\,\partial\mathcal{F}_\lambda(X,\,Y)\big)\geq1, 
\end{equation}
for all $(X,\,Y)$ satisfying $\operatorname{dist}\big((X,\,Y),\Gamma\big)<\varepsilon$ and $\zeta<\mathcal{F}_\lambda(X,\,Y)<\zeta+\nu$. Moreover, since $\{\mathcal{R}_k\}$ converges non-increasingly to $\zeta$ (by Proposition \ref{prop:Omegak-descentbehavior}(ii)), there exists an integer $K_1$ such that $\zeta<\mathcal{R}_k<\zeta+\nu$ holds for all $k\geq K_1$.

In the following, for notational simplicity, we define
\begin{equation}\label{notation}
\begin{aligned}
M&:=\left \lceil\textstyle\frac{1+\sqrt{1-p_{\min}}}{1-\sqrt{1-p_{\min}}}  \right \rceil ^2, \quad\ell(k):=k+M-1, \quad\Xi_{k}:=\sqrt{\mathcal{R}_{k}-\mathcal{R}_{k+1}},\\
\Delta_{i,j}^{\varphi}&:=\varphi(\mathcal{R}_i-\zeta)-\varphi(\mathcal{R}_j-\zeta), \quad\pi:= \sqrt{cp_{\min}}/2,
\end{aligned}
\end{equation}
where $\left\lceil a \right\rceil$ is the smallest integer greater than or equal to $a$. Then, it follows from \eqref{omegaupdate} that
\begin{equation}\label{xk-xkinequal}
\|X^{k+1}-X^{k}\|_F+\|Y^{k+1}-Y^{k}\|_F
\leq\sqrt{2\left(\|X^{k+1}-X^{k}\|_F^2+\|Y^{k+1}-Y^{k}\|_F^2\right)}
\leq\frac{\Xi_k}{\pi}.
\end{equation}
In addition, take any $(\widetilde{X},\,\widetilde{Y})\in \Gamma$ and let $\{(X^{k_j},\,Y^{k_j})\}_{j\in\mathbb{N}}$ be a subsequence converging to $(\widetilde{X},\,\widetilde{Y})$. With these preparations, we proceed to prove the convergence of $\{(X^{k},\,Y^{k})\}$ and divide the proof into four steps.

\textit{Step 1.}
We claim that, for the $\varepsilon>0$ given above, there exists a sufficiently large index $J>0$ such that the following inequality holds:
\begin{equation}\label{epsiloninequa}
\widehat{Q}:=\sup\limits_{j\geq J}\left\{\mathcal{Q}_j:=\|X^{k_j} - \widetilde{X}\|_F + \|Y^{k_j} - \widetilde{Y}\|_F + \frac{4}{\pi}\sum_{i={k_j}-1}^{\ell({k_j})-1} \Xi_{i}
+ \frac{d}{\pi^2} \sum_{i = {k_j}}^{\ell({k_j})} \varphi(\mathcal{R}_i-\zeta)\right\}<\varepsilon.
\end{equation}
Recall from \eqref{notation} that $\ell(k) - k = M - 1$, which is a fixed constant. Thus, the number of terms in each summation within $\mathcal{Q}_j$ is fixed and independent of $k$. Moreover, since $\{\mathcal{R}_k\}$ converges monotonically to $\zeta$ (by Proposition \ref{prop:Omegak-descentbehavior}(ii)) and $\varphi$ is continuous on $[0,\nu)$ with $\varphi(0)=0$ (by the properties required on the function $\varphi$ in the KL property), it follows that $\sum_{i = k_j}^{\ell(k_j)} \Xi_{i-1}\to 0$ and $\sum_{i = k_j}^{\ell(k_j)} \varphi(\mathcal{R}_i-\zeta)\to 0$ as $j\to\infty$. These, together with $(X^{k_j}, \,Y^{k_j}) \to (\widetilde{X}, \,\widetilde{Y})$ as $j\to\infty$ imply $\mathcal{Q}_j\to0$, and therefore there exists an index $J$ such that \eqref{epsiloninequa} holds.

\vspace{1mm}
\textit{Step 2.}
We show that
\begin{equation}\label{eq:2caseinequa}
\textstyle\frac{1-\sqrt{1-p_{\min}}}{\sqrt{M}}\sum_{i=k}^{\ell(k)}\Xi_i\le
\left(\frac{1}{2}+\sqrt{1-p_{\min}}\right)\Xi_{k-1}+\frac{d}{2\pi}\Delta^\varphi_{k,k+M}
\end{equation}
holds for all $k\in \left\{j\in \mathbb{N}:(X^j,Y^j)\in \mathcal{B}_{\widehat{Q}}(\widetilde{X},\widetilde{Y}),\ j\geq K_1\right\}$ with 
\begin{equation*}
\mathcal{B}_{\widehat{Q}}(\widetilde{X},\,\widetilde{Y})
:=\left\{(X,\,Y)\in\mathbb{R}^{n \times r} \times \mathbb{R}^{n \times r}:
\|X-\widetilde{X}\|_F+\|Y-\widetilde{Y}\|_F\leq\widehat{Q}\right\}.
\end{equation*}

To prove this, consider an arbitrary index $k$ from the above index set. For such $k\geq K_1$, we have that $\zeta<\mathcal{R}_k<\zeta+\nu$, $\|X^k-\widetilde{X}\|_F+\|Y^k-\widetilde{Y}\|_F\leq\widehat{Q}<\varepsilon$, and the inequality \eqref{eq:dist-inequa} holds. Moreover, by Jensen's inequality, we have that
\begin{align*}
{\textstyle\frac{1}{\sqrt{M}}}\sqrt{\mathcal{R}_k-\mathcal{R}_{k+M}}
&=\sqrt{{\textstyle\frac{1}{M}}\left(\mathcal{R}_k-\mathcal{R}_{k+1}+\dots+\mathcal{R}_{k+M-1}-\mathcal{R}_{k+M}\right)}  \\
&\geq {\textstyle\frac{1}{M}} \left(\sqrt{\mathcal{R}_k-\mathcal{R}_{k+1}}+\dots+\sqrt{\mathcal{R}_{k+M-1}-\mathcal{R}_{k+M}}\right).
\end{align*}
This, together with $\ell(k)=k+M-1$, $\Xi_{k}=\sqrt{\mathcal{R}_{k}-\mathcal{R}_{k+1}}$, and $1-\sqrt{1-p_{\min}}>0$ (due to $p_{\min}\in(0,1)$), yields
\begin{equation}\label{eq:Jenseninequa}
\textstyle\frac{1-\sqrt{1-p_{\min}}}{\sqrt{M}}\sum_{i=k}^{\ell(k)}\Xi_i
\leq\left(1-\sqrt{1-p_{\min}}\right)\sqrt{\mathcal{R}_k-\mathcal{R}_{k+M}}.
\end{equation}
Next, we prove \eqref{eq:2caseinequa} by estimating the right-hand side of \eqref{eq:Jenseninequa} in two cases.

\textbf{Case 1:}
$\mathcal{F}_\lambda(X^{k},\,Y^{k})\leq\mathcal{R}_{k+M}$. In this case, from the updating rule of $\mathcal{R}_k$ in \eqref{eq:omegadefinition}, we have
\begin{align*}
&\mathcal{R}_k-\mathcal{R}_{k+M} 
= (1-p_k)\mathcal{R}_{k-1}+p_k\mathcal{F}_\lambda(X^{k},\,Y^{k})-\mathcal{R}_{k+M}\\
&\leq (1-p_k)\mathcal{R}_{k-1}+p_k\mathcal{R}_{k+M}-\mathcal{R}_{k+M}=(1-p_k)(\mathcal{R}_{k-1}-\mathcal{R}_{k+M})\\
&\leq (1-p_{\min})(\mathcal{R}_{k-1}-\mathcal{R}_{k+M})\quad
\quad ({\rm by}\,\,p_k\in [p_{\min},1]\,\,{\rm and}\,\,\mathcal{R}_{k-1}\geq\mathcal{R}_{k+M})\\
&= (1-p_{\min})(\mathcal{R}_{k-1}-\mathcal{R}_k+\mathcal{R}_k-\mathcal{R}_{k+M}).
\end{align*}
Taking square roots on both sides of this inequality and using the fact that $\sqrt{a+b}\leq\sqrt{a}+\sqrt{b}$ for all $a$, $b\geq 0$, we obtain by rearranging the resulting terms that
\begin{equation*}
\big(1-\sqrt{1-p_{\min}}\big)\sqrt{\mathcal{R}_k-\mathcal{R}_{k+M}}
\leq \sqrt{1-p_{\min}}\,\Xi_{k-1}.
\end{equation*}
Substituting this into \eqref{eq:Jenseninequa} leads to the desired inequality \eqref{eq:2caseinequa} for this case.

\textbf{Case 2:}
$\mathcal{F}_\lambda(X^{k},\,Y^{k})>\mathcal{R}_{k+M}$. In this case, by the choice of $k$ and Proposition \ref{prop:Omegak-descentbehavior}(i), we have that $\zeta<\mathcal{R}_{k+M}<\mathcal{F}_\lambda(X^{k},\,Y^{k})\leq\mathcal{R}_k<\zeta+\nu$ and $\operatorname{dist}\big((X^k,Y^k),\Gamma\big)<\varepsilon$. Thus, it follows from \eqref{KL-inequa} that
\begin{equation}\label{KLproper}
\varphi'\big(\mathcal{F}_\lambda(X^{k},\,Y^{k})-\zeta\big)\operatorname{dist}\big(0,\,\partial\mathcal{F}_\lambda(X^{k},\,Y^{k})\big)\geq1.
\end{equation}
Moreover, we see that
\begin{align*}
&\quad\operatorname{dist}\big(0,\,\partial\mathcal{F}_\lambda(X^{k},\,Y^{k})\big)\cdot \Delta^\varphi_{k,k+M}\\
&=\operatorname{dist}\big(0,\,\partial\mathcal{F}_\lambda(X^{k},\,Y^{k})\big)\cdot [\varphi(\mathcal{R}_k-\zeta)-\varphi(\mathcal{R}_{k+M}-\zeta)]\\
&\geq\operatorname{dist}\big(0,\,\partial\mathcal{F}_\lambda(X^{k},\,Y^{k})\big)\cdot \left[\varphi\big(\mathcal{F}_\lambda(X^{k},\,Y^{k})-\zeta\big)-\varphi(\mathcal{R}_{k+M}-\zeta)\right]\\
&\geq\operatorname{dist}\big(0,\,\partial\mathcal{F}_\lambda(X^{k},\,Y^{k})\big)\cdot \varphi'\big(\mathcal{F}_\lambda(X^{k},\,Y^{k})-\zeta\big)\cdot\big(\mathcal{F}_\lambda(X^{k},\,Y^{k})-\mathcal{R}_{k+M}\big)\\
&\geq\mathcal{F}_\lambda(X^{k},\,Y^{k})-\mathcal{R}_{k+M},
\end{align*}
where the first inequality follows from the monotonicity of $\varphi$ and $\mathcal{F}_\lambda(X^{k},\,Y^{k})\leq\mathcal{R}_k$ by Proposition \ref{prop:Omegak-descentbehavior}(i); the second inequality follows from the concavity of $\varphi$; the last inequality follows from \eqref{KLproper} and the hypothesis $\mathcal{F}_\lambda(X^{k}, \,Y^{k})>\mathcal{R}_{k+M}$. This, together with \eqref{eq:dist-inequa} and \eqref{xk-xkinequal}, yields
\begin{equation}\label{f-omega}
\mathcal{F}_\lambda(X^{k},\,Y^{k})-\mathcal{R}_{k+M}
\leq\textstyle\frac{d}{\pi}\Delta^\varphi_{k,k+M}\Xi_{k-1}.
\end{equation}
Now, using the updating rule of $\mathcal{R}_k$ in \eqref{eq:omegadefinition}, we have that
\begin{align*}
\mathcal{R}_k-\mathcal{R}_{k+M}
&=(1-p_k)\mathcal{R}_{k-1}+p_k\mathcal{F}_\lambda(X^{k},\,Y^{k})-\mathcal{R}_{k+M}\\
&=(1-p_k)(\mathcal{R}_{k-1}-\mathcal{R}_{k+M})+p_k(\mathcal{F}_\lambda(X^{k},\,Y^{k})-\mathcal{R}_{k+M})\\
&\leq (1-p_{\min})\big(\mathcal{R}_{k-1}-\mathcal{R}_k+\mathcal{R}_k-\mathcal{R}_{k+M}\big)
+ \textstyle\frac{d}{\pi}\Delta^\varphi_{k,k+M}\Xi_{k-1},
\end{align*}
where the inequality follows from $p_k\in[p_{\min},1]$, $\mathcal{F}_\lambda(X^{k}, \,Y^{k})>\mathcal{R}_{k+M}$, and \eqref{f-omega}. Taking square roots on both sides of the above inequality and using the fact $2\sqrt{ab}\leq a+b$ and $\sqrt{a+b}\leq\sqrt{a}+\sqrt{b}$ for all $a,\,b\geq0$, we have that
\begin{align*}
\sqrt{\mathcal{R}_k-\mathcal{R}_{k+M}}
&\leq \sqrt{(1-p_{\min})\big(\mathcal{R}_{k-1}-\mathcal{R}_k+\mathcal{R}_k-\mathcal{R}_{k+M}\big)+\textstyle\frac{d}{\pi}\Delta^\varphi_{k,k+M}\Xi_{k-1}}\\
&\leq\sqrt{1-p_{\min}}\,\Xi_{k-1} + \sqrt{1-p_{\min}}\,\sqrt{\mathcal{R}_k-\mathcal{R}_{k+M}}+\textstyle\frac{1}{2}\Xi_{k-1}+\frac{d}{2\pi}\Delta^\varphi_{k,k+M}\\
&=\sqrt{1-p_{\min}}\sqrt{\mathcal{R}_k-\mathcal{R}_{k+M}}+\left(\textstyle\frac{1}{2}+\sqrt{1-p_{\min}}\right)\Xi_{k-1}+\textstyle\frac{d}{2\pi}\Delta^\varphi_{k,k+M},
\end{align*}
which implies that
\begin{equation*}
\big(1-\sqrt{1-p_{\min}}\big)\sqrt{\mathcal{R}_k-\mathcal{R}_{k+M}}
\leq\left(\textstyle\frac{1}{2}+\sqrt{1-p_{\min}}\right)\Xi_{k-1}
+\textstyle\frac{d}{2\pi}\Delta^\varphi_{k,k+M}.
\end{equation*}
Substituting this into \eqref{eq:Jenseninequa} further implies inequality \eqref{eq:2caseinequa} also holds for this case.

\vspace{1mm}
\textit{Step 3.} Without loss of generality, we assume that $J$ is a sufficiently large index such that $k_J\geq K_1$. For such $k_J$, we claim that the following relations hold for all $k\geq \ell(k_J)$: 
\begin{numcases}{}
(X^k,\,Y^k)\in\mathcal{B}_{\widehat{Q}}(\widetilde{X}, \,\widetilde{Y}), \label{first-statement} \\
\textstyle\sum_{i = \ell(k_J)}^k \Xi_i\leq \left(1 + 2\sqrt{1 - p_{\min}}\right) \sum_{i = k_J-1}^{\ell(k_J)-1} \Xi_{i} + \frac{d}{\pi} \sum_{ i = k_J}^{\ell(k_J)} \varphi(\mathcal{R}_i - \zeta). \label{second-statement}
\end{numcases}

We prove by induction. For all $k \in \{k_J, \ldots, \ell(k_J)\}$, it follows from \eqref{xk-xkinequal} and \eqref{epsiloninequa} that
\begin{align*}
&\quad~\|X^k-\widetilde{X}\|_F+\|Y^k-\widetilde{Y}\|_F \\
&=\|X^k-X^{k-1}+\cdots+X^{k_J}-\widetilde{X}\|_F+\|Y^k-Y^{k-1}+\cdots+Y^{k_J}-\widetilde{Y}\|_F\\
&\leq \|X^{k_J}-\widetilde{X}\|_F + \|Y^{k_J}-\widetilde{Y}\|_F 
+ {\textstyle\sum_{i=k_J}^{k-1}}\left(\|X^{i+1}-X^{i}\|_F
+ \|Y^{i+1}-Y^{i}\|_F\right) \\
&\leq\|X^{k_J}-\widetilde{X}\|_F + \|Y^{k_J}-\widetilde{Y}\|_F 
+ {\textstyle\sum_{i=k_J-1}^{\ell(k_J)-1}}\left(\|X^{i+1}-X^{i}\|_F
+ \|Y^{i+1}-Y^{i}\|_F\right) \\
&\leq \|X^{k_J}-\widetilde{X}\|_F + \|Y^{k_J}-\widetilde{Y}\|_F
+ {\textstyle\frac{1}{\pi}\sum_{i=k_J-1}^{\ell(k_J)-1}}\Xi_{i}
\leq \widehat{Q}.
\end{align*}
This proves \eqref{first-statement} for $k = k_J, \ldots, \ell(k_J)$. Using this fact, we see that
\begin{equation}\label{sqrtM-Xi-ellkJ}
\begin{aligned}
&{\textstyle\big(1-\sqrt{1-p_{\min}}\big)\sqrt{M}\,\Xi_{\ell(k_J)} 
\leq \frac{1-\sqrt{1-p_{\min}}}{\sqrt{M}}\sum_{i=k_J}^{\ell(k_J)}\sum_{t=i}^{\ell(i)}\Xi_t} \\
\leq& {\textstyle\left(\frac{1}{2}+\sqrt{1-p_{\min}}\right)\sum_{i=k_J}^{\ell(k_J)}\Xi_{i-1}
+\frac{d}{2\pi}\sum_{i=k_J}^{\ell(k_J)}\Delta^\varphi_{i,i+M}} \\
\leq& {\textstyle\left(\frac{1}{2}+\sqrt{1-p_{\min}}\right)\sum_{i=k_J-1}^{\ell(k_J)-1}\Xi_{i}
+ \frac{d}{2\pi}\sum_{i=k_J}^{\ell(k_J)}\varphi(\mathcal{R}_i-\zeta)}\\
\leq&{\textstyle\left(\frac{1}{2}+\sqrt{1-p_{\min}}\right)\left(\sum_{i=k_J-1}^{\ell(k_J)-1}\Xi_{i}+\Xi_{\ell(k_J)} \right)
+ \frac{d}{2\pi}\sum_{i=k_J}^{\ell(k_J)}\varphi(\mathcal{R}_i-\zeta)},
\end{aligned}
\end{equation}
where the first inequality follows from the nonnegativity of $\Xi_k$ and the fact that the term $\Xi_{\ell(k_J)}$ occurs $M$ times in the double sum; the second inequality is obtained by applying \eqref{eq:2caseinequa} to each $k\in\{k_J ,k_J+1, \ldots, \ell(k_J)\}$; the third inequality follows from $\Delta^\varphi_{i, i+M}:=\varphi(\mathcal{R}_i-\zeta)-\varphi(\mathcal{R}_{i+M}-\zeta)\leq \varphi(\mathcal{R}_i-\zeta)$ for any $i$. By the definition of $M$ in \eqref{notation}, it is straightforward to verify that 
\begin{equation}\label{Minequal}
(1-\sqrt{1 - p_{\min}}) \sqrt{M} - \left(\textstyle\frac{1}{2}+ \sqrt{1 - p_{\min}}\right)\geq \textstyle\frac{1}{2}.  
\end{equation}
This, together with \eqref{sqrtM-Xi-ellkJ}, yields that
\begin{align*}
\Xi_{\ell(k_J)} \leq \left(1 + 2\sqrt{1 - p_{\min}}\right) \textstyle\sum_{i = k_J-1}^{\ell(k_J)-1} \Xi_{i}+\frac{d}{\pi} \sum_{i = k_J}^{\ell(k_J)}\varphi(\mathcal{R}_i-\zeta),
\end{align*}
which implies that \eqref{second-statement} holds for $k = \ell(k_J)$.

Next, suppose that \eqref{first-statement} and \eqref{second-statement} hold for all $k$ from $\ell(k_J)$ to some $K \geq \ell(k_J)$. It remains to show that they also hold for $k=K+1$. Indeed, since \eqref{second-statement} holds for $K$ and $1+2\sqrt{1-p_{\min }}\leq3$ (due to $p_{\min}\in(0,1)$), we have that
\begin{equation*}
\textstyle\sum_{i = \ell(k_J)}^K \Xi_i \leq 3 \sum_{i = k_J-1}^{\ell(k_J)-1} \Xi_{i} + \frac{d}{\pi} \sum_{i = k_J}^{\ell(k_J)} \varphi(\mathcal{R}_i-\zeta),
\end{equation*}
which further implies
\begin{equation*}
\textstyle\sum_{i=k_J}^K \Xi_i 
\leq \textstyle\sum_{i=k_J-1}^K \Xi_i 
= \sum_{i = k_J-1}^{\ell(k_J)-1} \Xi_{i} + \sum_{i = \ell(k_J)}^K \Xi_i 
\leq 4 \sum_{i = k_J-1}^{\ell(k_J)-1} \Xi_{i} + \frac{d}{\pi} \sum_{i = k_J}^{\ell(k_J)} \varphi(\mathcal{R}_i-\zeta).
\end{equation*}
Using this relation, together with \eqref{xk-xkinequal}, we see that
\begin{align*}
&\quad~\|X^{K+1}-\widetilde{X}\|_F+\|Y^{K+1}-\widetilde{Y}\|_F \\
&= \|X^{K+1}-X^{K}+\cdots+X^{k_J}-\widetilde{X}\|_F+\|Y^{K+1}-Y^{K}+\cdots+Y^{k_J}-\widetilde{Y}\|_F \\
&\leq \|X^{k_J} - \widetilde{X}\|_F + \|Y^{k_J} - \widetilde{Y}\|_F +\textstyle\sum_{i = k_J}^K \left(\|X^{i+1} - X^{i}\|_F+\|Y^{i+1} - Y^{i}\|_F\right) \\
&\leq \|X^{k_J} - \widetilde{X}\|_F + \|Y^{k_J} - \widetilde{Y}\|_F + \textstyle\frac{1}{\pi}\sum_{i = k_J}^{K}\Xi_{i}\\
&\leq \|X^{k_J } - \widetilde{X}\|_F + \|Y^{k_J } - \widetilde{Y}\|_F + \textstyle\frac{4}{\pi}\sum_{i = k_J-1}^{\ell(k_J)-1} \Xi_{i}+\frac{d}{\pi^2} \sum_{i = k_J}^{\ell(k_J)} \varphi(\mathcal{R}_i-\zeta).
\end{align*}
This, togehther with \eqref{epsiloninequa}, implies that \eqref{first-statement} holds for $k=K + 1$.

We now verify \eqref{second-statement} for $k=K + 1$. By the above discussion, we see that $(X^k,\,Y^k) \in \mathcal{B}_{\widehat{Q}}(\widetilde{X},\,\widetilde{Y})$ holds for all $k \in \{k_J, k_J+1,\ldots, K + 1\}$. Using this fact, we have that
\begin{equation}\label{sqrtM-sum}
\begin{aligned}
&\textstyle(1-\sqrt{1 - p_{\min}}) \sqrt{M} \sum_{i = \ell(k_J)}^{K+1} \Xi_i 
\leq \frac{1 - \sqrt{1 - p_{\min}}}{\sqrt{M}} \sum_{i = k_J}^{K+1} \sum_{t = i}^{\ell(i)} \Xi_t \\
&\leq \textstyle \left(\frac{1}{2} + \sqrt{1 - p_{\min}}\right) \sum_{i = k_J}^{K+1} \Xi_{i-1} + \frac{d}{2\pi} \sum_{i = k_J}^{K+1} \Delta^\varphi_{i, i+M} \\
&\leq\textstyle\left(\frac{1}{2} + \sqrt{1 - p_{\min}}\right) \sum_{i = k_J-1}^{K} \Xi_{i} + \frac{d}{2\pi}\sum_{i = k_J}^{\ell(k_J)} \varphi(\mathcal{R}_i-\zeta)\\
&\leq\textstyle\left(\frac{1}{2} + \sqrt{1 - p_{\min}}\right)\left( \sum_{i = k_J-1}^{\ell(k_J)-1} \Xi_{i}+\sum_{i = \ell(k_J)}^{K+1} \Xi_{i}\right)+ \frac{d}{2\pi}\sum_{i = k_J}^{\ell(k_J)} \varphi(\mathcal{R}_i-\zeta),
\end{aligned} 
\end{equation}
where the first inequality follows from the nonnegativity of $\Xi_k$ and the fact that each term $\Xi_i$ for $i \in\{ \ell(k_J),\ell(k_J)+1, \ldots, K+ 1\} $ occurs $M$ times in the double sum; the second inequality is obtained by applying \eqref{eq:2caseinequa} to each $k\in\{k_J ,k_J+1,\ldots, K+1\}$; and the third inequality holds because
\begin{align*}
\sum_{i = k_J}^{K+1} \Delta^\varphi_{i, i+M}
&=\sum_{i = k_J}^{K+1}\varphi(\mathcal{R}_i-\zeta)-\sum_{i = k_J}^{K+1}\varphi(\mathcal{R}_{i+M}-\zeta)=\sum_{i = k_J}^{K+1}\varphi(\mathcal{R}_i-\zeta)-\sum_{i = k_J+M}^{k+M+1}\varphi(\mathcal{R}_{i}-\zeta)\\
&=\sum_{i = k_J}^{k_J+M-1}\varphi(\mathcal{R}_i-\zeta)-\sum_{i = K+2}^{k+M+1}\varphi(\mathcal{R}_{i}-\zeta)
\leq\sum_{i = k_J}^{\ell(k_J)}\varphi(\mathcal{R}_i-\zeta).
\end{align*}
Using \eqref{sqrtM-sum}, together with \eqref{Minequal}, we obtain that 
\begin{align*}
\textstyle\sum_{i = \ell(k_J)}^{K+1} \Xi_i\leq \left(1 + 2\sqrt{1 - p_{\min}}\right) \sum_{i = k_J-1}^{\ell(k_J)-1} \Xi_{i} + \frac{d}{\pi}\sum_{i = k_J}^{\ell(k_J)} \varphi(\mathcal{R}_i-\zeta)
\end{align*}
which shows that \eqref{second-statement} holds for $k=K+1$ and completes the induction.

\vspace{1mm}
\textit{Step 4.}
Since \eqref{second-statement} holds for all $k\geq\ell(k_J)$, taking the limit in \eqref{second-statement} yields
\begin{equation*}
\textstyle\sum_{i = \ell(k_J)}^\infty \Xi_i \leq \left(1 + 2\sqrt{1 - p_{\min}}\right)\sum_{i = k_J-1}^{\ell(k_J)-1} \Xi_{i} + \frac{d}{\pi} \sum_{i = k_J}^{\ell(k_J)} \varphi(\mathcal{R}_i-\zeta)<\infty.
\end{equation*}
This, together with \eqref{xk-xkinequal}, yields
\begin{equation*}
{\textstyle\sum_{i=\ell(k_J)}^\infty}\left(\|X^{i+1}-X^i\|_F+\|Y^{i+1}-Y^i\|_F\right)
\leq{\textstyle\frac{1}{\pi}\sum_{i=\ell(k_J)}^\infty} \Xi_i 
< \infty,
\end{equation*}
which implies that ${\textstyle\sum_{i=0}^\infty}\left(\|X^{i+1}-X^i\|_F+\|Y^{i+1}-Y^i\|_F\right)<\infty$ and hence $\{(X^k,\,Y^k)\}$ is convergent. We then complete the proof.
\end{proof}

Based on the KL exponent, we further characterize the convergence rates of both the iterate sequence $\{(X^k,\,Y^k)\}$ and the corresponding objective function values. 

\begin{theorem}\label{thm:fun_rate}
Suppose that Assumption \ref{assumA} holds and $\mathcal{F}_\lambda$ is a KL function with an exponent $\theta \in[0,1)$. Let $\{(X^k, \,Y^k)\}$ be the sequence generated by the A-NAUM in Algorithm \ref{algo-A-NAUM}, and let $\zeta$ be given in Proposition \ref{prop:Omegak-descentbehavior}(ii). Then, the following statements hold for all sufficiently large $k$. 
\begin{itemize}
\item[{\rm(i)}] If $\theta=0$, there exist $c_1>0$ and $\eta_1\in(0,1)$ such that 
$\zeta-c_1\eta_1^k\leq \mathcal{F}_\lambda(X^k,\,Y^k)\leq\zeta$.

\item[{\rm(ii)}] If $\theta\in\left(0,\frac{1}{2}\right]$, there exist $c_2>0$ and $\eta_2 \in(0,1)$ such that 
$|\mathcal{F}_\lambda(X^k,\,Y^k)-\zeta|\leq c_2 \eta_2^k$.

\item[{\rm(iii)}] If $\theta\in\left(\frac{1}{2}, 1\right)$, there exists $c_3>0$ such that 
$|\mathcal{F}_\lambda(X^k,\,Y^k)-\zeta|\leq c_3k^{-\frac{1}{2\theta-1}}$.
\end{itemize}
\end{theorem}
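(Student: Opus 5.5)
The plan is to derive a recursion on the reference values $\mathcal{R}_k$ and then transfer it to $\mathcal{F}_\lambda(X^k,Y^k)$. By Proposition \ref{prop:Omegak-descentbehavior}, $\mathcal{R}_k\downarrow\zeta$ and $\mathcal{F}_\lambda(X^k,Y^k)\le\mathcal{R}_k$. By Theorem \ref{thm:fullconverge}, the iterates converge to some $(X^*,Y^*)\in\Gamma$. Hence for large $k$ they lie in the neighborhood where the KL inequality with $\varphi(t)=\tilde a t^{1-\theta}$ holds, namely
\[
\operatorname{dist}\big(0,\partial\mathcal{F}_\lambda(X^k,Y^k)\big)\ge \tfrac{1}{\tilde a(1-\theta)}\big(\mathcal{F}_\lambda(X^k,Y^k)-\zeta\big)^{\theta}
\]
whenever $\mathcal{F}_\lambda(X^k,Y^k)>\zeta$. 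I will combine this with Lemma \ref{lem:dist-inequa} and \eqref{xk-xkinequal}, which give $\operatorname{dist}(0,\partial\mathcal{F}_\lambda(X^k,Y^k))\le \frac{d}{\pi}\Xi_{k-1}$ with $\Xi_{k-1}^2=\mathcal{R}_{k-1}-\mathcal{R}_k$. Write $e_k:=\mathcal{R}_k-\zeta\ge0$. The update \eqref{eq:omegadefinition} gives
\[
e_k=(1-p_k)e_{k-1}+p_k\big(\mathcal{F}_\lambda(X^k,Y^k)-\zeta\big).
\]
So either $\mathcal{F}_\lambda(X^k,Y^k)\le\zeta$, which yields $e_k\le(1-p_{\min})e_{k-1}$, or the KL bound gives $(\mathcal{F}_\lambda(X^k,Y^k)-\zeta)^{2\theta}\le C(e_{k-1}-e_k)$. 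In the second case, since $\mathcal{F}_\lambda(X^k,Y^k)-\zeta\le e_k$ and $e_k-(1-p_k)e_{k-1}\le p_k(\mathcal{F}_\lambda-\zeta)$, I obtain the key recursion
\[
\big(e_k-(1-p_{\min})e_{k-1}\big)_+^{2\theta}\le C'(e_{k-1}-e_k).
\]

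Next I treat the cases of $\theta$ separately. For $\theta=0$, the KL inequality gives $\operatorname{dist}\ge 1/\tilde a$ whenever $\mathcal{F}_\lambda>\zeta$. But $\operatorname{dist}\to0$ by Lemma \ref{lem:dist-inequa} and Proposition \ref{prop:Omegak-descentbehavior}(iv), so eventually $\mathcal{F}_\lambda(X^k,Y^k)\le\zeta$. Then $e_k\le(1-p_{\min})e_{k-1}$, so $e_k$ decays linearly. For the lower bound $\mathcal{F}_\lambda(X^k,Y^k)\ge\zeta-c_1\eta_1^k$, I write
\[
\mathcal{F}_\lambda(X^k,Y^k)-\zeta=\frac{e_k-(1-p_k)e_{k-1}}{p_k}\ge-\frac{e_{k-1}}{p_{\min}},
\]
which decays linearly. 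This proves (i).

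For $\theta\in(0,\tfrac12]$, the quantities $e_k$ are small, so $t^{2\theta}\ge t$ for $t\le1$. In the second case this gives $e_k-(1-p_{\min})e_{k-1}\le C'(e_{k-1}-e_k)$, hence
\[
e_k\le\frac{1-p_{\min}+C'}{1+C'}\,e_{k-1},
\]
a linear rate in both cases. For $\theta\in(\tfrac12,1)$, I would follow the standard Attouch–Bolte argument adapted to this recursion. Either $e_k\le(1-p_{\min}/2)e_{k-1}$, or $e_k-(1-p_{\min})e_{k-1}\ge\frac{p_{\min}}{2}e_{k-1}\ge\frac{p_{\min}}{2}e_k$. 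In the latter case, $e_k^{2\theta}\le C''(e_{k-1}-e_k)$. Comparing with $h(s)=s^{-2\theta}$ then gives $e_k^{1-2\theta}-e_{k-1}^{1-2\theta}\ge c>0$. The first alternative also yields such an increment because $e_k\to0$. Summing gives $e_k\le c_3k^{-1/(2\theta-1)}$.

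Finally, I transfer the bounds to $|\mathcal{F}_\lambda(X^k,Y^k)-\zeta|$ using
\[
-\frac{e_{k-1}}{p_{\min}}\le\mathcal{F}_\lambda(X^k,Y^k)-\zeta\le\frac{e_k}{p_k}\le\frac{e_k}{p_{\min}}.
\]
The main obstacle is the sublinear case. There I must carefully handle the interplay between the nonmonotone averaging, where $\mathcal{F}_\lambda$ and $\mathcal{R}$ differ, and the two-branch comparison argument, so that both branches give a uniform lower bound on the increments of $e_k^{1-2\theta}$.
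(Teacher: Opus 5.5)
Your proposal is correct and follows essentially the same route as the paper: track $e_k=\mathcal{R}_k-\zeta$, split on whether $\mathcal{F}_\lambda(X^k,Y^k)\le\zeta$ (contraction by $1-p_{\min}$) or not (where the KL inequality, Lemma~\ref{lem:dist-inequa} and \eqref{omegaupdate} give $(\mathcal{F}_\lambda(X^k,Y^k)-\zeta)^{2\theta}\le C(e_{k-1}-e_k)$), do the standard case analysis in $\theta$, and transfer back to $\mathcal{F}_\lambda$ through the averaging rule \eqref{eq:omegadefinition}. The differences are cosmetic. You exclude the second branch for $\theta=0$ via $\operatorname{dist}(0,\partial\mathcal{F}_\lambda(X^k,Y^k))\to0$ rather than via smallness of $e_{k-1}$, and you split the sublinear case on the ratio $e_k/e_{k-1}$; its second branch automatically gives $e_k^{-2\theta}\le(1-p_{\min}/2)^{-2\theta}e_{k-1}^{-2\theta}$, as the integral comparison requires. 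You should only add the trivial case $e_{K_0}=0$ for some $K_0$, so that $e_k^{1-2\theta}$ is well defined.
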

\begin{proof}
See Appendix \ref{proof:thm:fun_rate}.
\end{proof}

\begin{theorem}\label{thm:iter_rate}
Under the same conditions of Theorem \ref{thm:fun_rate}, 
and let $(\widetilde{X},\,\widetilde{Y})$ be the limit point, the following statements hold for all sufficiently large $k$.
\begin{itemize}
\item[{\rm(i)}] If $\theta\in\left[0,\frac{1}{2}\right]$, there exist $d_1>0$ and $\varrho \in(0,1)$ such that $\|X^k-\widetilde{X}\|_F + \|Y^k-\widetilde{Y}\|_F\leq d_1\varrho^k$.

\item[{\rm(ii)}] If $\theta\in\left(\frac{1}{2},1\right)$, there exists $d_2>0$ such that $\|X^k-\widetilde{X}\|_F + \|Y^k-\widetilde{Y}\|_F\leq d_2k^{-\frac{1-\theta}{2\theta-1}}$.
\end{itemize}
\end{theorem}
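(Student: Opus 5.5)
The plan is to control the tail length $\sum_{i\ge k}(\|X^{i+1}-X^i\|_F+\|Y^{i+1}-Y^i\|_F)$, since it bounds $\|X^k-\widetilde{X}\|_F+\|Y^k-\widetilde{Y}\|_F$ by the triangle inequality. By \eqref{xk-xkinequal}, this tail is at most $\frac{1}{\pi}\sum_{i\ge k}\Xi_i$, so it suffices to bound $S_k:=\sum_{i\ge k}\Xi_i$ with $\Xi_i=\sqrt{\mathcal{R}_i-\mathcal{R}_{i+1}}$.

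The first step is to redo the estimate of Step 3 in the proof of Theorem~\ref{thm:fullconverge}, but starting from an arbitrary large index $k$ instead of $k_J$. Since the whole sequence converges, $(X^k,Y^k)$ eventually stays in $\mathcal{B}_{\widehat Q}(\widetilde X,\widetilde Y)$ and $k\ge K_1$. Hence \eqref{eq:2caseinequa} holds for every large index. Summing \eqref{eq:2caseinequa} over $i\ge k$ as in \eqref{sqrtM-sum}, and using \eqref{Minequal} together with the telescoping of $\Delta^\varphi_{i,i+M}$, should give
\begin{equation*}
S_{k+M-1}\le C_1\sum_{i=k-1}^{k+M-2}\Xi_i+C_2\sum_{i=k}^{k+M-1}\varphi(\mathcal{R}_i-\zeta),
\end{equation*}
with constants depending only on $p_{\min},d,\pi$. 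By Cauchy--Schwarz and monotonicity of $\mathcal{R}$, we have $\sum_{i=k-1}^{k+M-2}\Xi_i\le\sqrt{M}\sqrt{\mathcal{R}_{k-1}-\zeta}$, and $\sum_{i=k}^{k+M-1}\varphi(\mathcal{R}_i-\zeta)\le M\varphi(\mathcal{R}_{k}-\zeta)$. With $\varphi(t)=\tilde a t^{1-\theta}$, this gives
\begin{equation*}
S_{k+M-1}\le C\big[(\mathcal{R}_{k-1}-\zeta)^{1/2}+(\mathcal{R}_{k-1}-\zeta)^{1-\theta}\big].
\end{equation*}

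Next I would insert the rates for $\mathcal{R}_k-\zeta$. I expect the proof of Theorem~\ref{thm:fun_rate} in the appendix to establish these rates for the reference values $\mathcal{R}_k$ themselves, since that is where monotonicity is available. If not, I would rederive them from the recursion implied by \eqref{eq:2caseinequa} and the KL inequality.
\begin{itemize}
\item For $\theta\in[0,\tfrac12]$, $\mathcal{R}_k-\zeta\le c\eta^k$, so both powers decay geometrically and $S_k\le d_1\varrho^k$ with $\varrho=\eta^{\min\{1/2,\,1-\theta\}}$. When $\theta=0$, I would use that $\mathcal{R}_k-\zeta$ decays linearly, or is eventually zero.
\item For $\theta\in(\tfrac12,1)$, $\mathcal{R}_k-\zeta\le ck^{-1/(2\theta-1)}$. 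Since $1-\theta<\tfrac12$, the dominant term is $(\mathcal{R}_{k-1}-\zeta)^{1-\theta}\le ck^{-(1-\theta)/(2\theta-1)}$. Shifting the index by the constant $M$ only changes constants.
\end{itemize}

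The main obstacle is that Theorem~\ref{thm:fun_rate} is stated for $\mathcal{F}_\lambda(X^k,Y^k)$, which can lie below $\zeta$, rather than for $\mathcal{R}_k$. If the appendix rates are only available for $\mathcal{F}_\lambda$, I would first derive them for $\mathcal{R}_k$ directly. The route is to combine the KL inequality (applied as in Case 2 whenever $\mathcal{F}_\lambda(X^k,Y^k)>\zeta$), Lemma~\ref{lem:dist-inequa}, and $\mathcal{R}_k-\mathcal{R}_{k+1}\ge \frac{cp_{\min}}{2}(\|X^{k+1}-X^{k}\|_F^2+\|Y^{k+1}-Y^{k}\|_F^2)$ from \eqref{omegaupdate}. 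Together with the convex-combination update \eqref{eq:omegadefinition}, this should give a recursion of the form $(\mathcal{R}_{k+M}-\zeta)^{2\theta}\le C(\mathcal{R}_{k-1}-\mathcal{R}_{k+M})$. The standard KL-exponent rate argument on this recursion, taken along a subsequence with step $M+1$, then gives the rates for $\mathcal{R}_k$.
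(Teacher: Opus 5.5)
Your proposal is correct and follows essentially the same route as the paper. You sum \eqref{eq:2caseinequa} from an arbitrary large index and telescope the $\Delta^\varphi$ terms. This bounds the tail by $t_1(\Delta_\mathcal{R}^{k-1})^{1/2}+t_2(\Delta_\mathcal{R}^{k-1})^{1-\theta}$, and you then insert the rates; your Cauchy--Schwarz factor $\sqrt{M}$ and the index shift only change constants.

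The obstacle you anticipate does not arise. Step 3 of the appendix proof of Theorem~\ref{thm:fun_rate} already bounds $\max\{|\mathcal{F}_\lambda(X^k,Y^k)-\zeta|,\,\mathcal{R}_k-\zeta\}$ through $\Delta_\mathcal{R}^{k-1}$, so the rates for $\mathcal{R}_k$ are available and the fallback recursion is unnecessary.
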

\begin{proof}
See Appendix \ref{proof:thm:iter_rate}. 
\end{proof}

\section{Numerical experiments}\label{sec-numer}

In this section, we evaluate the proposed A-NAUM for solving the following \emph{approximate symmetric nonnegative matrix factorization} problem:
\begin{equation}\label{eq:GSNMF}
\min_{X,Y\in\mathbb{R}^{n\times r}}
~~\mathcal{F}_\lambda(X,\,Y):=\frac{1}{2}\|XY^\top-M\|_F^2+\frac{\lambda}{2}\|X-Y\|_F^2
\quad\text{s.t.}\quad X\geq 0, ~Y\geq 0,
\end{equation}
which is a representative and practically important instance of \eqref{quadpenaltypro}. Here, $M\in\mathbb{R}^{n\times n}$ is a given symmetric target matrix, $r>0$ is the factorization rank, and $\lambda\geq 0$ controls the strength of symmetry enforcement. Problem \eqref{eq:GSNMF} corresponds to \eqref{quadpenaltypro} with $\Psi(X)=\delta_{\mathbb{R}_+^{n\times r}}(X)$, $\Phi(Y)=\delta_{\mathbb{R}_+^{n\times r}}(Y)$, and $\mathcal{A}=\mathcal{I}$. Consequently, A-NAUM applies directly, and the hierarchical-prox updates \eqref{supro:U-hier}--\eqref{supro:V-hier} yield closed-form column updates. When $\lambda>0$, the model promotes symmetry, while the limiting case $\lambda=0$ reduces to the standard NMF formulation. All experiments are implemented in MATLAB R2022a and executed on the same machine.

\vspace{2mm}
\noindent\textbf{Implementation of A-NAUM.} 
For solving problem \eqref{eq:GSNMF}, the explicit $Z$-update reduces to
\begin{equation*}
\textstyle Z^k=\frac{\alpha}{\alpha+\beta}X^k(Y^k)^\top+\frac{\beta}{\alpha+\beta}M,\qquad \forall k\geq 0,
\end{equation*}
and the hierarchical column updates take the following explicit forms
\begin{align*}
\bm{u}_i
&=\max\left\{0,\;
\frac{\alpha P_i^k\bm{y}_i^k+\lambda\bm{y}_i^k+\mu_k\bm{x}_i^k}{\alpha\|\bm{y}_i^k\|^2+\lambda+\mu_k}\right\},
\;
\bm{v}_i
=\max\left\{0,\;
\frac{\alpha(Q_i^k)^\top\bm{u}_i+\lambda\bm{u}_i+\sigma_k\bm{y}_i^k}{\alpha\|\bm{u}_i\|^2+\lambda+\sigma_k}\right\},
\quad \forall 1\leq i\leq n,
\end{align*}
where $P_i^k$ and $Q_i^k$ are defined through
\begin{equation}\label{eq:PQ2} \begin{aligned} P_i^k \bm{y}_i^k &=\textstyle \frac{\alpha}{\alpha + \beta} X^k (Y^k)^\top \bm{y}_i^k + \frac{\beta}{\alpha + \beta} M \bm{y}_i^k - \sum_{j=1}^{i-1} \bm{u}_j (\bm{y}_j^k)^\top \bm{y}_i^k - \sum_{j=i+1}^{r} \bm{x}_j^k (\bm{y}_j^k) ^\top\bm{y}_i^k,\\ (Q_i^k)^\top \bm{u}_i &=\textstyle \frac{\alpha}{\alpha + \beta} Y^k (X^k)^\top \bm{u}_i + \frac{\beta}{\alpha + \beta} M^\top \bm{u}_i - \sum_{j=1}^{i-1} \bm{v}_j\bm{u_j}^\top \bm{u}_i - \sum_{j=i+1}^{r} \bm{y}_j^k\bm{u_j}^\top \bm{u}_i. \end{aligned} \end{equation}
When evaluating terms such as $X^k (Y^k)^\top \bm{y}_i^k$ and $Y^k (X^k)^\top \bm{u}_i$ above, we avoid explicitly forming the potentially massive $(n\times n)$ matrix $X^k (Y^k)^\top$. Instead, we compute the smaller intermediate products $(Y^k)^\top \bm{y}_i^k$ and $(X^k)^\top \bm{u}_i$ first, and then multiply by $X^k$ or $Y^k$, respectively. Moreover, the Gram-type quantities $(X^k)^\top U$, $U^\top U$, $(Y^k)^\top Y^k$ and $M^\top U$ computed in \eqref{eq:PQ2} can be reused to efficiently evaluate successive changes and the objective value via
\begin{align*}
\|U - X^k\|_F^2 
&= \mathrm{tr}(U^\top U) - 2\,\mathrm{tr}\big((X^k)^\top U\big) + \mathrm{tr}\big((X^k)^\top X^k\big),\\[5pt]
\|V - Y^k\|_F^2 
&= \mathrm{tr}(V^\top V) - 2\,\mathrm{tr}\big((Y^k)^\top V\big) + \mathrm{tr}\big((Y^k)^\top Y^k\big),\\[5pt]
\|UV^\top - M\|_F^2 
&= \mathrm{tr}\big((U^\top U)(V^\top V)\big) - 2\,\mathrm{tr}\big((M^\top U)V\big) + \|M\|_F^2.
\end{align*}
Here, $\mathrm{tr}((X^k)^\top X^k)$ and $\mathrm{tr}((Y^k)^\top Y^k)$ can be obtained directly from the previously computed $U^\top U$ and $V^\top V$ (from the preceding iteration), and $\|M\|_F^2$ can be precomputed only once.

\vspace{2mm}
\noindent\textbf{Baselines and evaluation metrics.}
We compare A-NAUM with M-NAUM \cite{ypc2018nonmonotone} to evaluate the performance of the average-type nonmonotone line search against that of the max-type nonmonotone line search. We also compare A-NAUM with SymHALS~\cite{lzll2023provable}, a competitive column-wise updating method tailored for solving problem \eqref{eq:GSNMF}. Extensive comparisons in~\cite{lzll2023provable} indicate that SymHALS is the most efficient among existing algorithms, and thus it serves as a strong baseline for performance evaluation. For each run, we record running time, the number of iterations denoted by \texttt{iter}, and the relative objective value defined by $\texttt{relobj}:=\frac{\sqrt{2\mathcal{F}_\lambda}}{\|M\|_F}$. In addition, we plot the relative objective value against time to assess the practical convergence behavior.

\vspace{2mm}
\noindent\textbf{Initialization and termination.} 
All methods are initialized from the same starting point $(X^0,Y^0)$, whose entries are independently drawn from the standard uniform distribution on $(0,1)$. They are terminated when 
\begin{equation*}
\frac{|\mathcal{F}^{k}_{\lambda}-\mathcal{F}^{k-1}_{\lambda}|}{\mathcal{F}^{k}_{\lambda}+1}
\leq \texttt{tol}
\end{equation*}
for 3 consecutive iterations, where $\mathcal{F}^k_{\lambda}$ is the objective value at $(X^k,Y^k)$ and $\texttt{tol}$ is a given positive tolerance; or, the maximum allowed runtime is exceeded.


\vspace{2mm}
\noindent\textbf{Parameter settings.}
A-NAUM uses
$\mu^{\min}=\bar{\mu}_{-1}=1$,
$\sigma^{\min}=\bar{\sigma}_{-1}=1$,
$\sigma^{\max}=10^6$,
$\tau=4$,
$c=10^{-4}$,
$\mu_k^0=\max\{0.1\bar{\mu}_{k-1},\mu^{\min}\}$, and
$\sigma_k^0=\min\{\max\{0.1\bar{\sigma}_{k-1},\sigma^{\min}\},\sigma^{\max}\}$. We set $p_k\equiv0.2$ for the average-type reference value and $N=3$ for the max-type reference value. For a given parameter $\alpha$, set $\beta=\frac{\alpha}{\alpha-1}$, $\gamma=\max\big\{0,\,-\alpha,\,-(\alpha+\beta)\big\}$, and $\rho=\max\left\{1,\,\frac{\alpha^2}{(\alpha+\beta)^2}\right\}$.

\vspace{2mm}
\noindent\textbf{Real datasets.}
We conduct experiments on two face datasets: ORL\footnote{\url{https://www.cl.cam.ac.uk/research/dtg/attarchive/facedatabase.html}} and CBCL\footnote{\url{http://cbcl.mit.edu/software-datasets/FaceData2.html}}. 
ORL contains 400 images of faces with $112\times92$ pixels and CBCL contains 2429 images of faces with $19\times19$ pixels. For each dataset, we randomly select $n$ images and vectorize them to form a matrix $N$ with $n$ columns. Then, given a noise level parameter $t\geq0$, we construct the matrix $M\in\mathbb{R}^{n\times n}_+$ in~\eqref{eq:GSNMF} via (in MATLAB syntax): \texttt{M = N'*N; M = M/max(M(:))~+~t*abs(randn(n,n))}.

\subsection{Comparisons under different $\alpha$ and $\beta$}\label{sec:alpha_synth}

We first compare A-NAUM and M-NAUM by examining the effect of $\alpha$, and report the average results over 5 independent runs with $\texttt{tol}=10^{-12}$ in Table~\ref{table:diffe-alpha}. The results show that the choice of $\alpha$ has a noticeable impact on performance. In particular, $\alpha\in\{0.6,0.8\}$ consistently provides a favorable balance among iteration count, computational time, and the final relative objective value for both methods. By contrast, choices with $\alpha>1$ tend to be slower, which highlights the practical advantage of the relaxed relation $\tfrac{1}{\alpha}+\tfrac{1}{\beta}=1$, under which $\beta<0$ is allowed when $\alpha<1$. We also observe that A-NAUM performs comparably to M-NAUM in most cases, and is slightly better for the empirically best choices $\alpha\in\{0.6,0.8\}$. This is indeed unsurprising, since the two methods share essentially the same algorithmic framework and differ only in the nonmonotone line search criterion. However, A-NAUM admits substantially stronger convergence properties (including convergence of the entire sequence and convergence rate results) under weaker assumptions, which may make it more reliable in practice. Based on these observations, we fix $\alpha=0.6$ and report only the results of A-NAUM in the remaining experiments.


\begin{table}[htb!]
\caption{Comparisons between M-NAUM and A-NAUM with different $\alpha$, $t=0.001$, $r=5$, and $\lambda\in \{0,1\}$ on ORL. 
}\label{table:diffe-alpha}
\renewcommand\arraystretch{1}
\centering \tabcolsep 4.5pt
\scalebox{0.9}{
\begin{tabular}{|c|ccc|ccc|ccc|ccc|}
\hline
&\multicolumn{6}{c|}{$\lambda=0$}
&\multicolumn{6}{c|}{$\lambda=1$}\\
\hline
& \multicolumn{3}{c|}{M-NAUM} & \multicolumn{3}{c|}{A-NAUM}
& \multicolumn{3}{c|}{M-NAUM} & \multicolumn{3}{c|}{A-NAUM} \\
\hline
$\alpha$ & \texttt{iter} & \texttt{relobj} & \texttt{time} & \texttt{iter} 
& \texttt{relobj} & \texttt{time} & \texttt{iter} & \texttt{relobj} 
& \texttt{time} & \texttt{iter} & \texttt{relobj} & \texttt{time}  \\
\hline
 0.2 & 181308 & 8.38e-03 & 112.86 & 319155 & 8.38e-03 & 203.13 & 38628 & 8.38e-03 & 24.15 & 31660 & 8.38e-03 & 18.09 \\  
 0.4 & 69086 & 8.38e-03 & 37.38 & 120227 & 8.38e-03 & 69.39    & 16440 & 8.38e-03 & 9.23 & 32363 & 8.38e-03 & 18.40 \\  
 0.6 & 41026 & 8.38e-03 & 11.44 & 39884 & 8.38e-03 & 11.14     & 8114  & 8.38e-03 & 2.25 &  8255 & 8.38e-03 & 2.36 \\  
 0.8 & 51993 & 8.38e-03 & 14.55 & 50518 & 8.38e-03 & 14.25     & 8056  & 8.38e-03 & 2.21 &  7872 & 8.38e-03 & 2.15 \\  
 1.2 & 49643 & 8.38e-03 & 13.67 & 47723 & 8.38e-03 & 13.11     & 24270 & 8.38e-03 & 6.69 & 24657 & 8.38e-03 & 6.77 \\  
 1.4 & 53015 & 8.38e-03 & 14.76 & 51237 & 8.38e-03 & 14.63     & 25345 & 8.38e-03 & 7.10 & 25391 & 8.38e-03 & 7.01 \\  
 1.6 & 60417 & 8.38e-03 & 17.07 & 57989 & 8.38e-03 & 15.73     & 31909 & 8.38e-03 & 8.94 & 31386 & 8.38e-03 & 8.62 \\  
 1.8 & 60803 & 8.38e-03 & 17.72 & 58314 & 8.38e-03 & 16.58     & 37337 & 8.38e-03 & 10.21 & 37444 & 8.38e-03 & 10.35 \\  
 2.0 & 63325 & 8.38e-03 & 17.79 & 61291 & 8.38e-03 & 16.13     & 46509 & 8.38e-03 & 12.60 & 45449 & 8.38e-03 & 12.51 \\ 
\hline
\end{tabular}} 
\end{table}

\subsection{Comparisons under different rank and noise level}\label{sec:rank_noise}

For the same ORL dataset as in the previous subsection, we fix $\lambda=1$ and vary the factorization rank $r\in\{5,50,150\}$ and the noise level $t\in\{0.001,0.01,0.1\}$. For each setting, we run both SymHALS and A-NAUM (with $\alpha=0.6$) for $10$ seconds and plot the relative objective value (smaller is better) against running time in Figure~\ref{fig:rank_noise}. The reported curves are averaged over five independent runs. Figure~\ref{fig:rank_noise} shows that A-NAUM exhibits a consistently better time-to-accuracy profile than SymHALS across all nine settings. Specifically, within the same 10-second budget, it attains a final relative objective value that is always comparable to, and often smaller than, that of SymHALS. Such an advantage is modest in the easiest regimes, such as $r=5$, where the two methods behave similarly after the initial decay. As the problem becomes more challenging with larger $r$, the gap becomes more pronounced. In particular, for $r=50$ and $r=150$, A-NAUM consistently achieves lower relative objective values. Overall, these results indicate that A-NAUM outperforms SymHALS across different factorization ranks and noise levels.

\begin{figure}[htb!] 
\centering 
\subfigure{\includegraphics[width=0.32\textwidth]{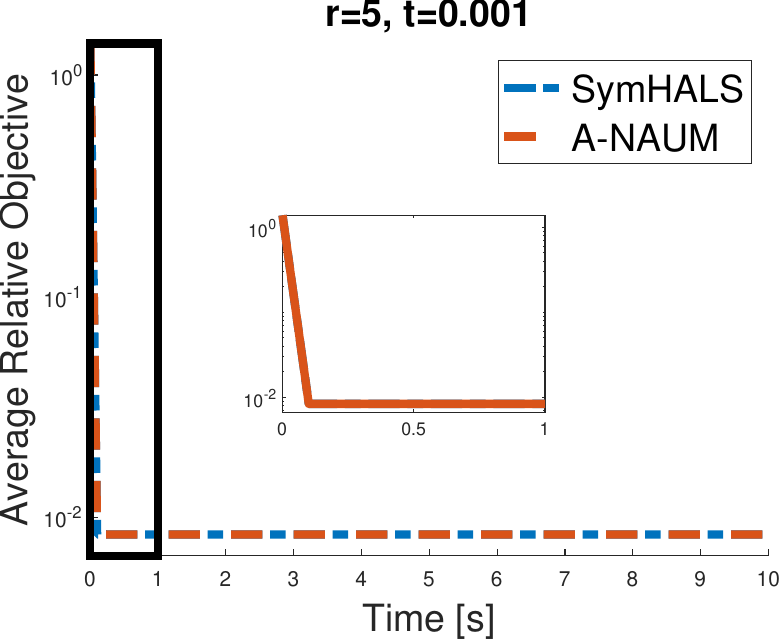} } \hfill 
\subfigure{\includegraphics[width=0.32\textwidth]{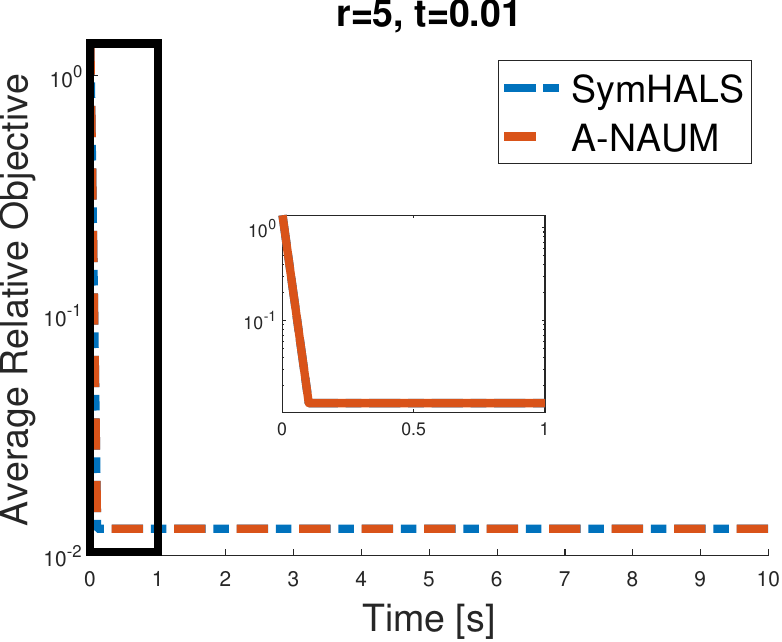} } 
\hfill 
\subfigure{\includegraphics[width=0.32\textwidth]{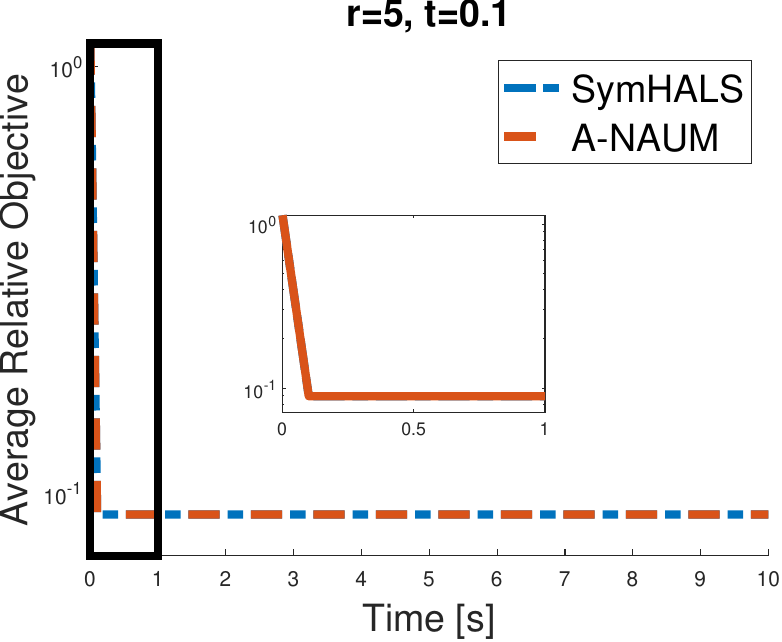} }\\ 
\subfigure{\includegraphics[width=0.32\textwidth]{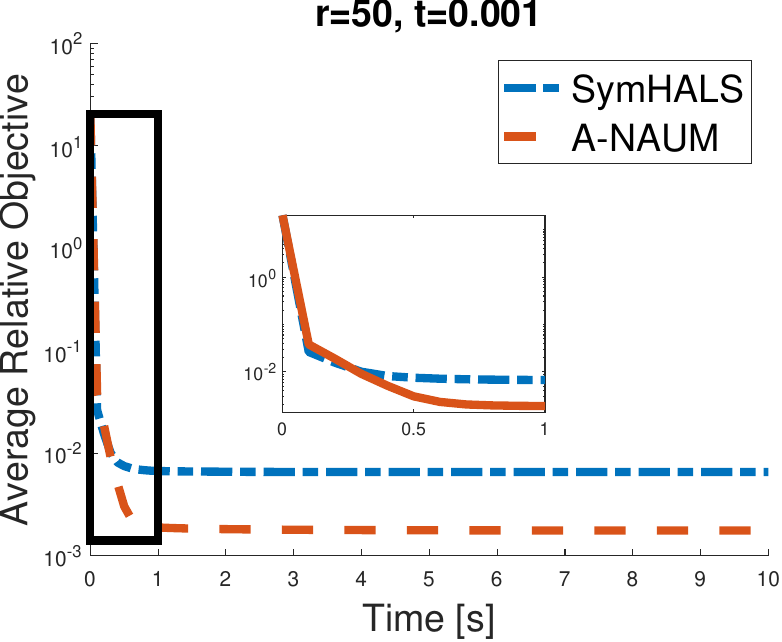}} \hfill 
\subfigure{\includegraphics[width=0.32\textwidth]{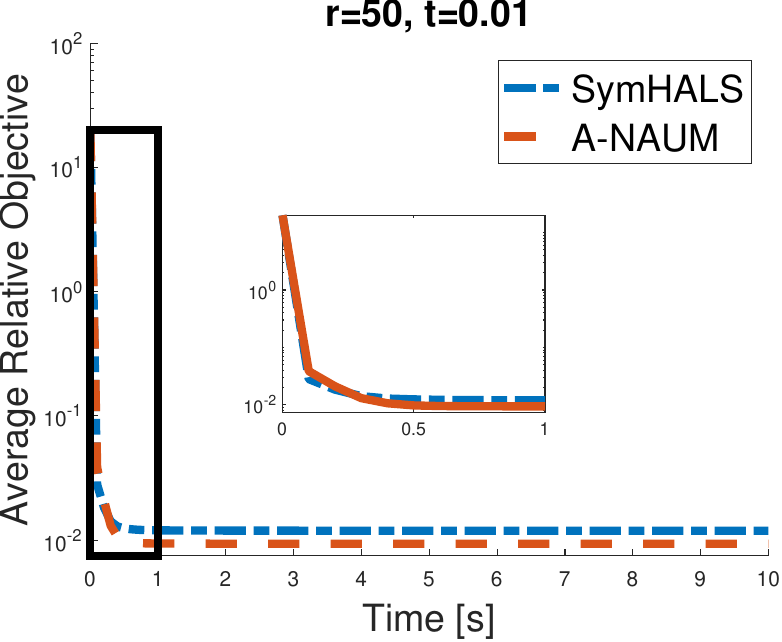} }
\hfill 
\subfigure{\includegraphics[width=0.32\textwidth]{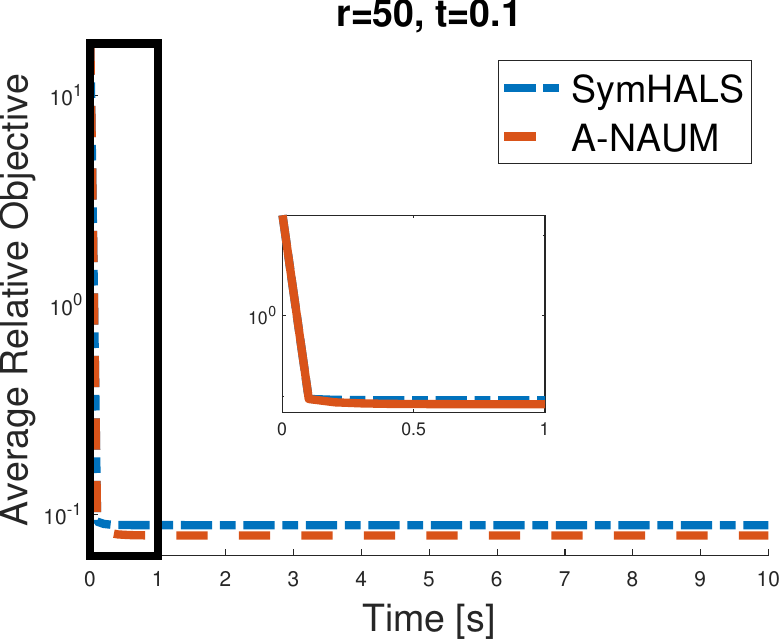} }\\ 
\subfigure{\includegraphics[width=0.32\textwidth]{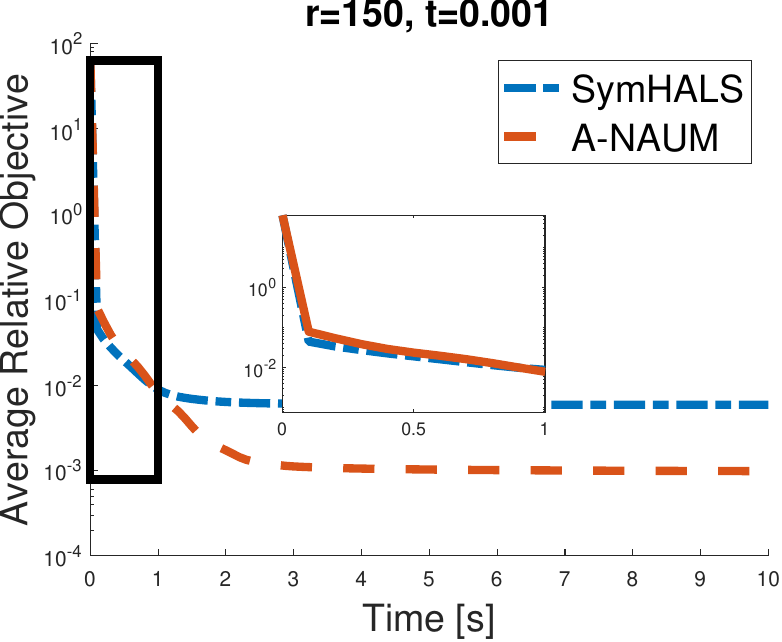}} \hfill 
\subfigure{\includegraphics[width=0.32\textwidth]{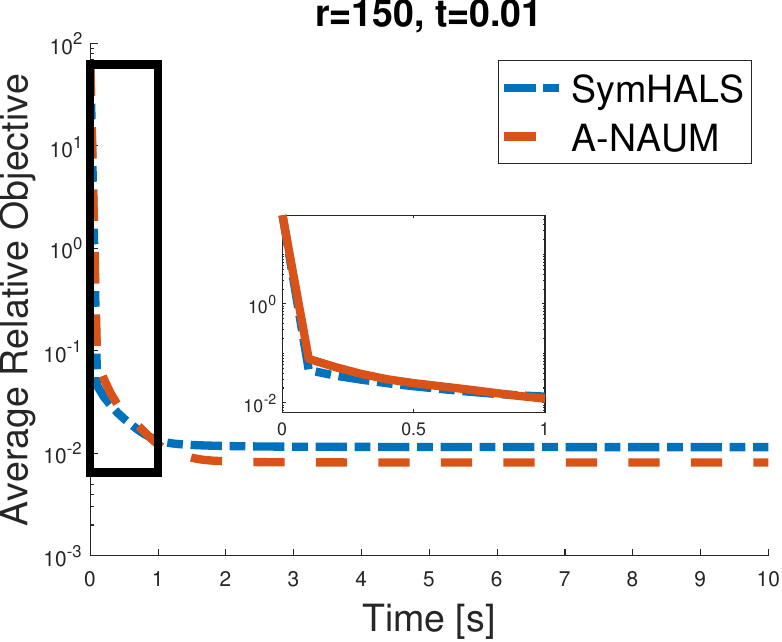} }
\hfill 
\subfigure{\includegraphics[width=0.32\textwidth]{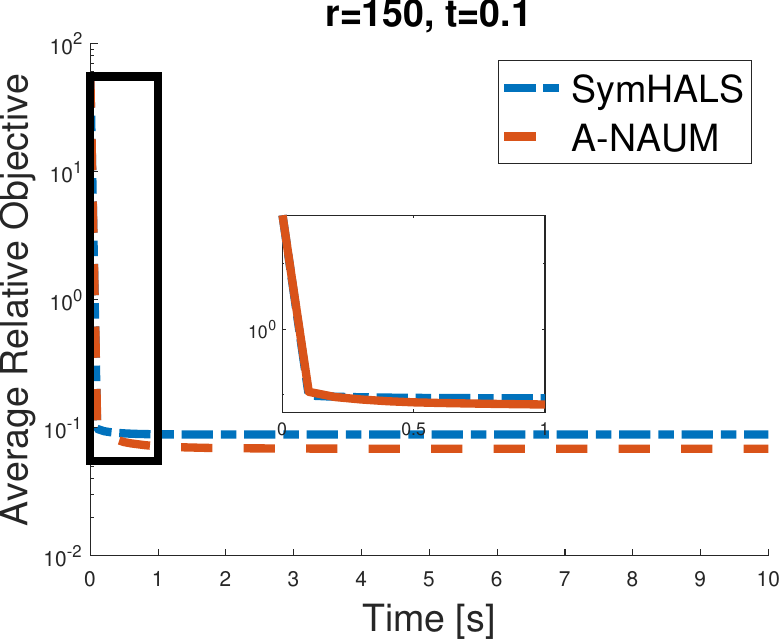} }
\caption{Average relative objective values versus running time (over 5 independent runs) for different factorization ranks $r\in\{5,50,150\}$ and noise levels $t\in\{0.001,0.01,0.1\}$ on ORL.} 
\label{fig:rank_noise} 
\end{figure}

\subsection{Comparisons under different $\lambda$}\label{sec:lambda_compare}

We further evaluate the performance of SymHALS and A-NAUM on ORL and CBCL for different penalty parameters $\lambda\in \{0.01,1,100\}$. In all cases, we fix the rank to $r=50$, the noise level to $t=0.001$, run both methods for $n/20$ seconds. The relative objective value versus running time is plotted in Figure~\ref{fig:lambda}. To assess the degree of symmetry attained by each method, the final quantity $\|X-Y\|_F^2$ is also reported in the legend of each plot. All results are averaged over 5 independent runs. 

Figure~\ref{fig:lambda} shows that A-NAUM consistently performs better than SymHALS. It exhibits a similar decay behavior but typically attains a final relative objective value that is comparable to, and often significantly smaller than, that of SymHALS. From the legends, which report the symmetry measure $\|X-Y\|_F^2$, we can also assess how close the two factors are at termination; smaller values indicate a more symmetric solution, i.e., $X\approx Y$. As expected, increasing $\lambda$ leads to noticeably more symmetric outputs for both methods. Specifically, when $\lambda=0.01$, the symmetry gaps are relatively larger, whereas for $\lambda=1$, and especially $\lambda=100$, they decrease by several orders of magnitude, indicating that the quadratic penalty term strongly enforces $X\approx Y$. Interestingly, A-NAUM achieves comparable and often smaller symmetry gaps than SymHALS for $\lambda\in\{0.01,100\}$, while SymHALS may attain smaller symmetry gaps for $\lambda=1$ despite yielding higher objective values. This observation reflects the intrinsic trade-off between data fidelity and constraint enforcement across different methods.

\begin{figure}[htb!] 
\centering 
\subfigure[ORL, $n=400$, $\lambda=0.01$]{\includegraphics[width=0.32\textwidth]{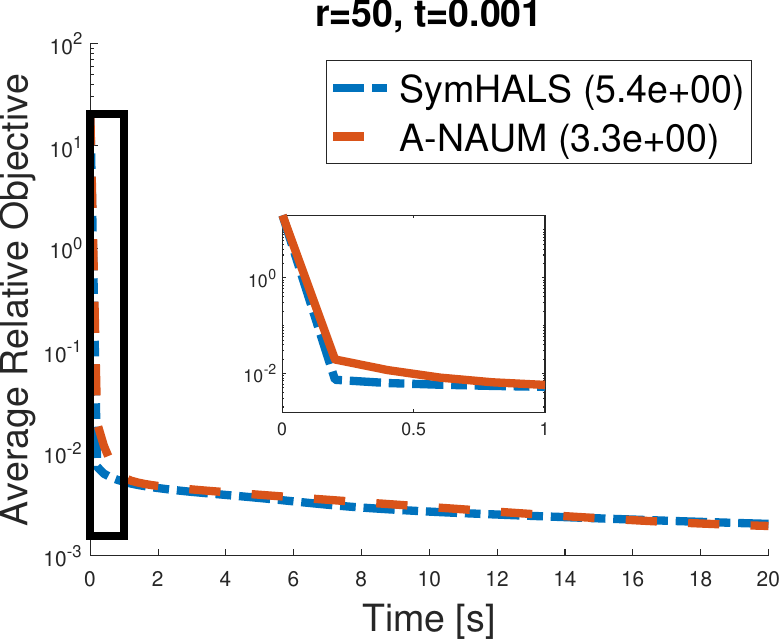}} \hfill 
\subfigure[ORL, $n=400$, $\lambda=1$]{\includegraphics[width=0.32\textwidth]{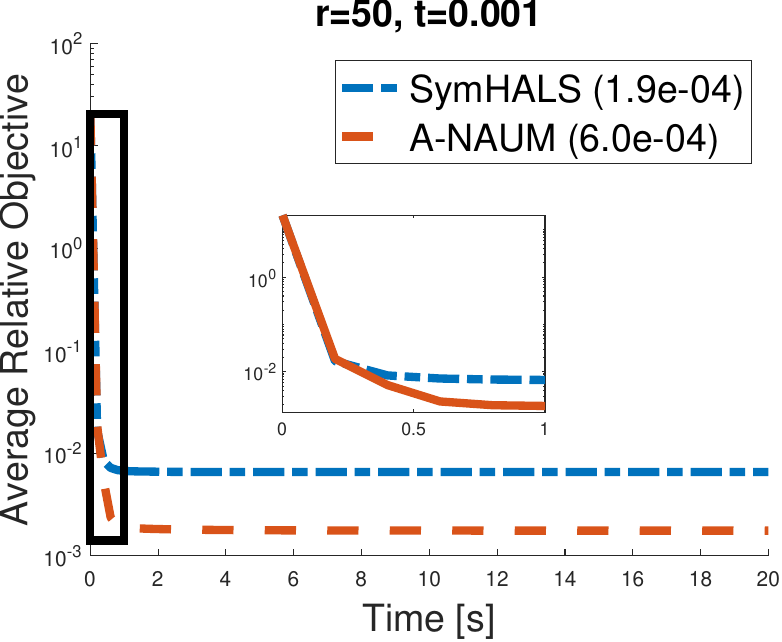}} \hfill 
\subfigure[ORL, $n=400$, $\lambda=100$]{\includegraphics[width=0.32\textwidth]{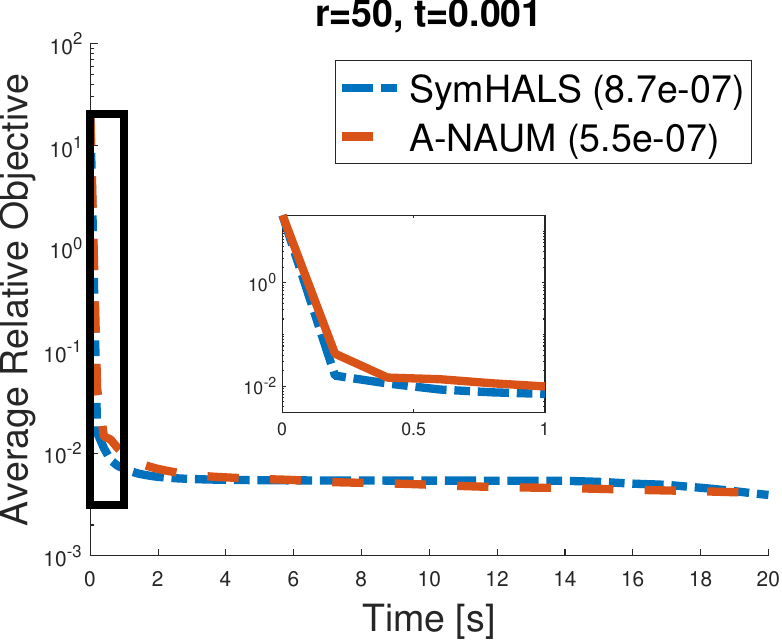}} \\ 
\subfigure[CBCL, $n=2429$, $\lambda=0.01$]{\includegraphics[width=0.32\textwidth]{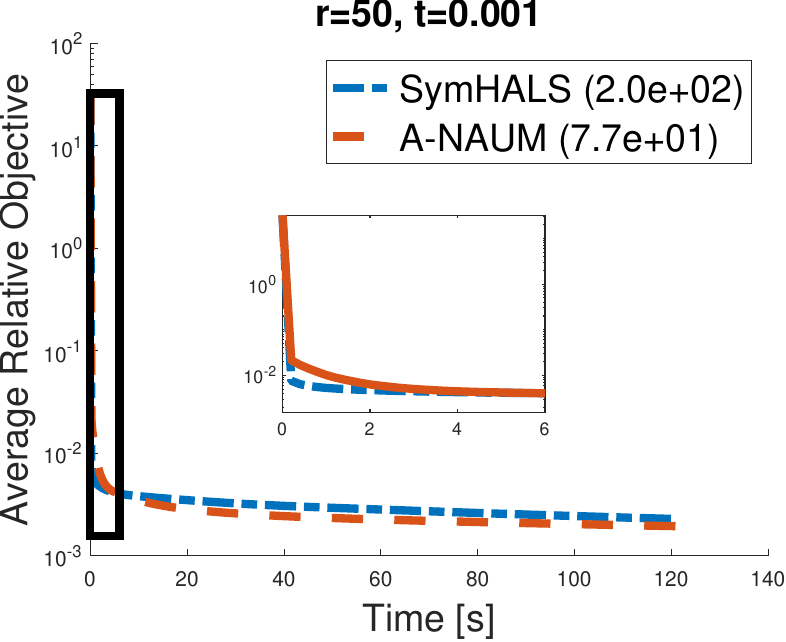}} \hfill 
\subfigure[CBCL, $n=2429$, $\lambda=1$]{\includegraphics[width=0.32\textwidth]{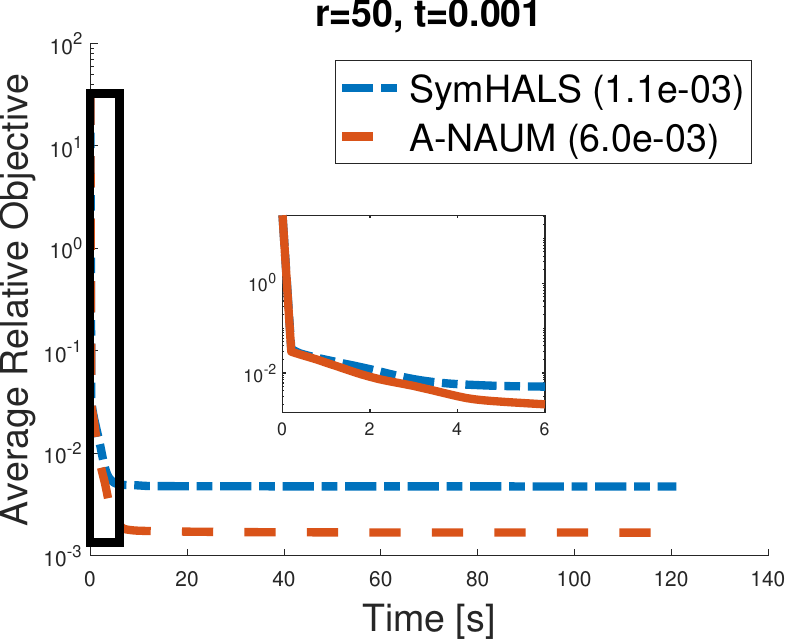}} \hfill 
\subfigure[CBCL, $n=2429$, $\lambda=100$]{\includegraphics[width=0.32\textwidth]{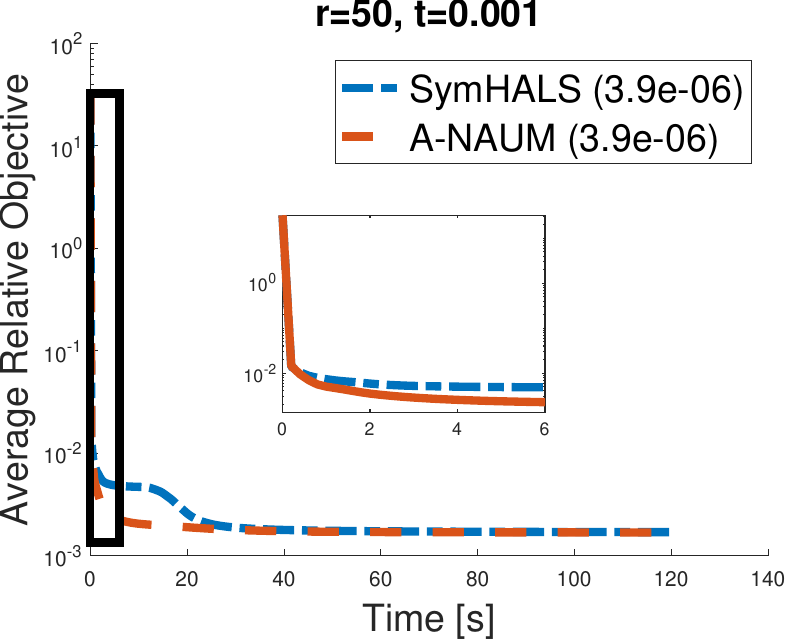}} \\ 
\caption{Average relative objective values versus running time (over 5 independent runs) for different penalty parameters $\lambda\in\{0.01,1,100\}$ on ORL and CBCL.}\label{fig:lambda} 
\end{figure}

\section{Conclusions}\label{sec-conclu}

We studied a generalized symmetric matrix factorization model that unifies a broad class of regularized matrix factorization formulations. On the modeling side, we established an exact penalty property showing that the symmetry-inducing quadratic penalty is exact for all sufficiently large but finite penalty parameters, and developed an exact relaxation framework that rigorously links stationary points of the original objective to those of a relaxed potential function. On the algorithmic side, we proposed A-NAUM, an average-type nonmonotone alternating updating method that leverages the decoupling structure to yield tractable subproblems and flexible update schemes. We proved the well-definedness of the nonmonotone line search under mild conditions, established global convergence of the entire sequence, and derived convergence-rate characterizations under the Kurdyka–{\L}ojasiewicz property and its associated exponent. Numerical experiments demonstrated the efficiency of the proposed A-NAUM.

\appendix

\section{A technical lemma} 

\begin{lemma}\label{lem:tr-inequa}
Let $A \in \mathbb{R}^{n \times n}$ be a symmetric matrix and $B \in \mathbb{R}^{n \times n}$ be a skew-symmetric matrix, i.e., $A^\top = A$ and $B^\top = -B$. The following results hold.
\begin{itemize}
\item [{\rm(i)}]$\langle A, B \rangle = \mathrm{tr}(AB) = 0$.

\item [{\rm(ii)}] (\cite[Lemma 1]{wkh1986trace}) Let $S \in \mathbb{R}^{n \times n}$ be a symmetric positive semidefinite matrix. It holds that $\rho_{\min}(A)\mathrm{tr}(S) \leq \mathrm{tr}(AS) \leq \rho_{\max}(A)\mathrm{tr}(S)$,
where $\rho_{\min}(A)$ and $\rho_{\max}(A)$ denote the smallest and largest eigenvalues of $A$, respectively.
\end{itemize}
\end{lemma}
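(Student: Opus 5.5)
Part (i) follows directly from the trace identity. Since $A^\top=A$ and $B^\top=-B$, we have $\langle A,B\rangle=\mathrm{tr}(A^\top B)=\mathrm{tr}(AB)$. We then compute the same trace a second way, using invariance of the trace under transposition and under cyclic permutation:
\[
\mathrm{tr}(AB)=\mathrm{tr}\big((AB)^\top\big)=\mathrm{tr}(B^\top A^\top)=-\mathrm{tr}(BA)=-\mathrm{tr}(AB).
\]
Hence $\mathrm{tr}(AB)=0$. Equivalently, the space of symmetric matrices is orthogonal to the space of skew-symmetric matrices in the Frobenius inner product.

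Part (ii) is the cited result of \cite[Lemma 1]{wkh1986trace}, so we may simply invoke it. For completeness, a short self-contained argument is as follows.

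1. Write the spectral decomposition $A=Q\Lambda Q^\top$ with $Q$ orthogonal and $\Lambda=\mathrm{diag}(\rho_1,\ldots,\rho_n)$.
2. Set $\widetilde S:=Q^\top S Q$. This matrix is still symmetric positive semidefinite, so its diagonal entries satisfy $\widetilde S_{ii}\ge 0$. Moreover, $\mathrm{tr}(\widetilde S)=\mathrm{tr}(S)$.
3. By cyclicity, $\mathrm{tr}(AS)=\mathrm{tr}(\Lambda\widetilde S)=\sum_i \rho_i\widetilde S_{ii}$.
4. Each weight $\widetilde S_{ii}$ is nonnegative, so we bound every $\rho_i$ below by $\rho_{\min}(A)$ and above by $\rho_{\max}(A)$. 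This gives
\[
\rho_{\min}(A)\,\mathrm{tr}(S)\le \mathrm{tr}(AS)\le \rho_{\max}(A)\,\mathrm{tr}(S).
\]

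The only point needing any care is the nonnegativity of $\widetilde S_{ii}$. It holds because $\widetilde S_{ii}=\bm{q}_i^\top S\bm{q}_i\ge 0$, where $\bm{q}_i$ is the $i$th column of $Q$.
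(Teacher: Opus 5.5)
Your proof is correct. The paper gives no proof here: it treats (i) as immediate and, for (ii), only cites \cite[Lemma~1]{wkh1986trace}. Your transpose-and-cyclicity argument for (i) and your spectral-decomposition argument for (ii), using that the weights $\widetilde S_{ii}=\bm{q}_i^\top S\bm{q}_i\ge 0$ sum to $\mathrm{tr}(S)$, are the standard justifications the paper leaves implicit.
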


\section{Missing Proofs in Section \ref{sec-exresult}}

\subsection{Proof of Theorem \ref{thm:2equal1}}\label{proof:2equal1}

\begin{proof}
    Since $(\widetilde{X}, \widetilde{Y})$ is a stationary point of problem \eqref{quadpenaltypro}, we have $0\in\partial\mathcal{F}_\lambda(\widetilde{X}, \widetilde{Y})$, 
which implies that there exist matrices $G \in \partial \Phi(\widetilde{X})$ and $H \in \partial \Phi(\widetilde{Y})$ such that
\begin{equation*}
\left\{\begin{aligned}
&0=G+(\mathcal{A}^*\mathcal{A}(\widetilde{X}\widetilde{Y}^\top) - \mathcal{A}^*\bm{b})\widetilde{Y} + \lambda(\widetilde{X} - \widetilde{Y}),  \\[3pt]
&0=H+(\mathcal{A}^*\mathcal{A}(\widetilde{X}\widetilde{Y}^\top) - \mathcal{A}^*\bm{b})^\top \widetilde{X} - \lambda(\widetilde{X} - \widetilde{Y}).
\end{aligned}\right.
\end{equation*}
Subtracting these two equations and using the symmetry of $\mathcal{A}^*\bm{b}$ (condition (i)) yields that
\begin{equation}\label{eq:equality}
\begin{aligned}
(2\lambda I +\mathcal{A}^*\bm{b})(\widetilde{X} - \widetilde{Y})
&=(\mathcal{A}^*\mathcal{A}(\widetilde{X}\widetilde{Y}^\top))^\top \widetilde{X}- \mathcal{A}^*\mathcal{A}(\widetilde{X}\widetilde{Y}^\top)\widetilde{Y} -(G - H).
\end{aligned}
\end{equation}
Taking the inner product with $\widetilde{X} - \widetilde{Y}$ on both sides of \eqref{eq:equality}, we have
\begin{equation}\label{eq:innerconduct}
 \langle 2\lambda I +\mathcal{A}^*\bm{b}, \,(\widetilde{X} - \widetilde{Y} )(\widetilde{X} - \widetilde{Y} )^\top\rangle 
=\langle(\mathcal{A}^*\mathcal{A}(\widetilde{X}\widetilde{Y}^\top))^\top \widetilde{X}- \mathcal{A}^*\mathcal{A}(\widetilde{X}\widetilde{Y}^\top)\widetilde{Y}-(G-H), \,\widetilde{X} - \widetilde{Y}\rangle.
\end{equation}

We next derive the lower bound for the left-hand side of \eqref{eq:innerconduct} and the upper bound for the right-hand side of \eqref{eq:innerconduct}, respectively. On the one hand, it follows from Lemma \ref{lem:tr-inequa}(ii) that
\begin{equation}\label{eq:leftbound}
\langle 2\lambda I+\mathcal{A}^*\bm{b}, \,(\widetilde{X} - \widetilde{Y} )(\widetilde{X} - \widetilde{Y} )^\top\rangle
\geq (2\lambda+\rho_{\min}(\mathcal{A}^*\bm{b}))\|\widetilde{X}-\widetilde{Y}\|_{F}^{2}.
\end{equation}
On the other hand, it follows from the $\kappa$-weak convexity of the proper closed function $\Phi$ that $\langle G-H, \,\widetilde{X} - \widetilde{Y}\rangle \geq -\kappa\|\widetilde{X}-\widetilde{Y}\|_{F}^{2}$, which further implies
\begin{equation}\label{eq:subdiffer}
\begin{aligned}
&\quad \langle(\mathcal{A}^*\mathcal{A}(\widetilde{X}\widetilde{Y}^\top))^\top \widetilde{X}- \mathcal{A}^*\mathcal{A}(\widetilde{X}\widetilde{Y}^\top)\widetilde{Y}-(G-H), \,\widetilde{X} - \widetilde{Y}\rangle   \\
&\leq\langle(\mathcal{A}^*\mathcal{A}(\widetilde{X}\widetilde{Y}^\top))^\top \widetilde{X}- \mathcal{A}^*\mathcal{A}(\widetilde{X}\widetilde{Y}^\top)\widetilde{Y}, \,\widetilde{X} - \widetilde{Y}\rangle + \kappa\|\widetilde{X}-\widetilde{Y}\|_{F}^{2}.
\end{aligned}
\end{equation}
For simplicity, we define a symmetric matrix $S$ and a skew-symmetric matrix $T$ respectively as
\begin{equation*}
S := \frac{\mathcal{A}^{*}\mathcal{A}(\widetilde{X}\widetilde{Y}^{\top}) + (\mathcal{A}^{*}\mathcal{A}(\widetilde{X}\widetilde{Y}^{\top}))^{\top}}{2},
\quad
T := \frac{\mathcal{A}^{*}\mathcal{A}(\widetilde{X}\widetilde{Y}^{\top}) - (\mathcal{A}^{*}\mathcal{A}(\widetilde{X}\widetilde{Y}^{\top}))^{\top}}{2},
\end{equation*}
which satisfy that $\mathcal{A}^*\mathcal{A}(\widetilde{X}\widetilde{Y}^\top) = S + T$ and $(\mathcal{A}^*\mathcal{A}(\widetilde{X}\widetilde{Y}^\top))^\top = S - T$.
By Lemma \ref{lem:tr-inequa}(i) and condition (ii), we further obtain
\begin{equation}\label{eq:Txy}
\begin{aligned}
&\langle T, \,\widetilde{X}\widetilde{X}^\top - \widetilde{Y}\widetilde{Y}^\top\rangle = 0, \\
&\langle T, \,\widetilde{X}\widetilde{Y}^\top - \widetilde{Y}\widetilde{X}^\top\rangle
=\langle {\textstyle\frac{\mathcal{A}^{*}\mathcal{A}(\widetilde{X}\widetilde{Y}^{\top} - \widetilde{Y}\widetilde{X}^{\top})}{2}}, \,\widetilde{X}\widetilde{Y}^\top - \widetilde{Y}\widetilde{X}^\top\rangle
={\frac{1}{2}} \| \mathcal{A}(\widetilde{X}\widetilde{Y}^\top - \widetilde{Y}\widetilde{X}^\top) \|_{F}^{2}.
\end{aligned}
\end{equation}
Using the above relations, one can check that
\begin{equation}\label{eq:rightbound1}
\begin{aligned}
&\quad \langle(\mathcal{A}^*\mathcal{A}(\widetilde{X}\widetilde{Y}^\top))^\top \widetilde{X}- \mathcal{A}^*\mathcal{A}(\widetilde{X}\widetilde{Y}^\top)\widetilde{Y},\,\widetilde{X} - \widetilde{Y}\rangle  \\
&=\langle(S - T)\widetilde{X} - (S + T)\widetilde{Y}, \,\widetilde{X} - \widetilde{Y}\rangle
=\langle S (\widetilde{X} - \widetilde{Y}),\,\widetilde{X} - \widetilde{Y}\rangle
-\langle T (\widetilde{X}+\widetilde{Y} ),\,\widetilde{X} - \widetilde{Y}\rangle \\
&=\langle S,\,(\widetilde{X} - \widetilde{Y})(\widetilde{X} - \widetilde{Y} )^\top\rangle
-\langle T, \,\widetilde{X}\widetilde{X}^\top - \widetilde{Y}\widetilde{Y}^\top\rangle
-\langle T, \,\widetilde{X}\widetilde{Y}^\top - \widetilde{Y}\widetilde{X}^\top\rangle\\
&=\langle S,\,(\widetilde{X} - \widetilde{Y})(\widetilde{X}- \widetilde{Y} )^\top\rangle
-{\frac{1}{2}} \| \mathcal{A}(\widetilde{X}\widetilde{Y}^\top - \widetilde{Y}\widetilde{X}^\top) \|_{F}^{2}  \qquad \mbox{(by \eqref{eq:Txy})}\\
&\leq \langle S,\,(\widetilde{X} - \widetilde{Y})(\widetilde{X}- \widetilde{Y} )^\top\rangle
\leq \rho_{\max}(S) \|  \widetilde{X} - \widetilde{Y} \|_{F}^{2}
\leq \| \mathcal{A}^*\mathcal{A}(\widetilde{X} \widetilde{Y}^\top) \|\|  \widetilde{X} -\widetilde{Y} \|_{F}^{2},
\end{aligned}
\end{equation}
where the second inequality follows from Lemma \ref{lem:tr-inequa}(ii) and the last inequality is due to
\begin{equation*}
\rho_{\max}(S)=\rho_{\max}\left(\textstyle\frac{\mathcal{A}^{*}\mathcal{A}(\widetilde{X}\widetilde{Y}^{\top}) + (\mathcal{A}^{*}\mathcal{A}(\widetilde{X}\widetilde{Y}^{\top}))^{\top}}{2}\right)\leq\| \mathcal{A}^*\mathcal{A}(\widetilde{X} \widetilde{Y}^\top) \|.
\end{equation*}
Now, combining \eqref{eq:innerconduct}, \eqref{eq:leftbound}, \eqref{eq:subdiffer}, \eqref{eq:rightbound1} and rearranging terms, we obtain
\begin{equation*}
\big(2\lambda + \rho_{\min}(\mathcal{A}^*\bm{b}) - \|\mathcal{A}^*\mathcal{A}(\widetilde{X}\widetilde{Y}^\top)\|-\kappa\big) \|\widetilde{X}-\widetilde{Y}\|_{F}^{2}
\leq 0.
\end{equation*}
Therefore, if $2\lambda>\| \mathcal{A}^*\mathcal{A}(\widetilde{X} \widetilde{Y}^\top) \|+\kappa-\rho_{\min}(\mathcal{A}^*\bm{b})$, it must hold that $\widetilde{X} = \widetilde{Y}$. Using this relation and condition (ii), it is straightforward to verify that $0\in \partial \Phi(\widetilde{X}) + \mathcal{A}^*(\mathcal{A}(\widetilde{X}\widetilde{X}^\top) -\bm{b})\widetilde{X}$, which implies that $\widetilde{X}$ is a stationary point of problem \eqref{SMFpro}. This completes the proof.
\end{proof}

\subsection{Proof of Proposition \ref{prop-samplingmap}} \label{proof:prop-samplingmap}

\begin{proof}
From the definition of $\mathcal{P}_\Omega$, its adjoint $\mathcal{P}_\Omega^*$ is given by
\[
\big[\mathcal{P}_\Omega^*(\bm{v})\big]_{ij}=
\begin{cases}
\bm{v}_{k}, & \text{if } (i,j)=(i_k,j_k)\in\Omega,\\
0, & \text{if } (i,j)\notin\Omega,
\end{cases}
\qquad \forall\,\bm{v}\in\mathbb{R}^{q}.
\]
Consequently, for any $U\in\mathbb{R}^{n\times n}$,
\[
\big[\mathcal{P}_\Omega^*\mathcal{P}_\Omega(U)\big]_{ij}=
\begin{cases}
U_{ij}, & (i,j)\in\Omega,\\
0, & (i,j)\notin\Omega.
\end{cases}
\]
Since $\Omega$ is symmetric, $(i,j)\in\Omega$ if and only if $(j,i)\in\Omega$, and hence
\[
\big[\mathcal{P}_\Omega^*\mathcal{P}_\Omega(U)\big]_{ji}=
\begin{cases}
U_{ji}, & (i,j)\in\Omega,\\
0, & (i,j)\notin\Omega.
\end{cases}
\]
Therefore, for any $(i,j)$,
\[
\begin{aligned}
\Big[\mathcal{P}_\Omega^*\mathcal{P}_\Omega(U)
-\big(\mathcal{P}_\Omega^*\mathcal{P}_\Omega(U)\big)^\top\Big]_{ij}
&=\big[\mathcal{P}_\Omega^*\mathcal{P}_\Omega(U)\big]_{ij}
-\big[\mathcal{P}_\Omega^*\mathcal{P}_\Omega(U)\big]_{ji}\\
&=
\begin{cases}
U_{ij}-U_{ji}, & (i,j)\in\Omega,\\
0, & (i,j)\notin\Omega,
\end{cases}
~=~\big[\mathcal{P}_\Omega^*\mathcal{P}_\Omega(U-U^\top)\big]_{ij}.
\end{aligned}
\]
This proves the desired identity.
\end{proof}

\subsection{Proof of Theorem \ref{thm:equal-min}}\label{proof:equal-min}

We begin with two auxiliary lemmas, whose verifications are straightforward and hence are omitted here for simplicity.

\begin{lemma}\label{lem:f=theta}
Suppose that $\mathcal{A}\mathcal{A}^* = \mathcal{I}_q$ and $\frac{1}{\alpha} + \frac{1}{\beta} = 1$. Then, for any $(X, \,Y, \,Z)$ satisfying
\begin{equation*}
Z = \textstyle\left(\mathcal{I} - \frac{\beta}{\alpha+\beta}\mathcal{A}^*\mathcal{A}\right)(XY^\top) + \frac{\beta}{\alpha+\beta}\mathcal{A}^*(\bm{b}),
\end{equation*}
we have $\mathcal{F}_{\lambda}(X, \,Y)=\Theta_{\alpha,\beta,\lambda}(X, \,Y, \,Z)$.
\end{lemma}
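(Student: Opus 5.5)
The proof is a direct substitution: plug the given $Z$ into $\Theta_{\alpha,\beta,\lambda}$ and show that
\[
\frac{\alpha}{2}\|XY^\top-Z\|_F^2+\frac{\beta}{2}\|\mathcal{A}(Z)-\bm{b}\|^2=\frac12\|\mathcal{A}(XY^\top)-\bm{b}\|^2 .
\]
The terms $\Psi(X)$, $\Phi(Y)$ and $\frac{\lambda}{2}\|X-Y\|_F^2$ appear identically in $\Theta_{\alpha,\beta,\lambda}$ and $\mathcal{F}_\lambda$, so this identity is all we need. Write $W:=XY^\top$, set $\eta:=\frac{\beta}{\alpha+\beta}$, and let $P:=\mathcal{A}^*\mathcal{A}$. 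Since $\mathcal{A}\mathcal{A}^*=\mathcal{I}_q$, the map $P$ is an orthogonal projection: $P^2=P$ and $P^*=P$. We also have $\mathcal{A}P=\mathcal{A}$ and $P\mathcal{A}^*=\mathcal{A}^*$.

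First, express each residual through the common vector $\bm{r}:=\mathcal{A}(W)-\bm{b}\in\mathbb{R}^q$. From $Z=W-\eta P(W)+\eta\mathcal{A}^*\bm{b}$ we get
\[
W-Z=\eta\big(P(W)-\mathcal{A}^*\bm{b}\big)=\eta\,\mathcal{A}^*\bm{r}.
\]
Applying $\mathcal{A}$ and using $\mathcal{A}\mathcal{A}^*=\mathcal{I}_q$,
\[
\mathcal{A}(Z)-\bm{b}=\mathcal{A}(W)-\eta\mathcal{A}(W)+\eta\bm{b}-\bm{b}=(1-\eta)\bm{r}.
\]
Because $\|\mathcal{A}^*\bm{r}\|_F^2=\langle \mathcal{A}\mathcal{A}^*\bm{r},\bm{r}\rangle=\|\bm{r}\|^2$, the two relaxed terms together equal
\[
\frac12\big(\alpha\eta^2+\beta(1-\eta)^2\big)\|\bm{r}\|^2 .
\]

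It remains to show that $\alpha\eta^2+\beta(1-\eta)^2=1$ under $\frac1\alpha+\frac1\beta=1$. This condition is equivalent to $\alpha+\beta=\alpha\beta$. With $\eta=\frac{\beta}{\alpha+\beta}$ and $1-\eta=\frac{\alpha}{\alpha+\beta}$, the coefficient becomes
\[
\frac{\alpha\beta^2+\beta\alpha^2}{(\alpha+\beta)^2}=\frac{\alpha\beta}{\alpha+\beta}=1 .
\]
This uses $\alpha+\beta\neq0$, which holds because $\alpha+\beta=\alpha\beta$ with $\alpha,\beta$ nonzero. Nonzero $\alpha,\beta$ are implicit in the condition itself, so the constraint guarantees that $Z$ is well defined.

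The only step that needs care is the second one: one must notice that both residuals collapse onto the single vector $\bm{r}$, which relies on $\mathcal{A}\mathcal{A}^*=\mathcal{I}_q$ and the isometry of $\mathcal{A}^*$. Once that is seen, the rest is scalar bookkeeping, and the argument does not use the sign of $\alpha$ or $\beta$.
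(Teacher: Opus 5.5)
Your proposal is correct and is essentially the verification the paper intends. The paper omits this proof as straightforward, but its proof of Theorem~\ref{thm:equal-sta} derives the same two residual identities, $XY^\top - Z = \frac{\beta}{\alpha+\beta}\mathcal{A}^*(\mathcal{A}(XY^\top)-\bm{b})$ and $\mathcal{A}(Z)-\bm{b} = \frac{\alpha}{\alpha+\beta}(\mathcal{A}(XY^\top)-\bm{b})$; combining these with $\|\mathcal{A}^*\bm{r}\|_F=\|\bm{r}\|$ and $\frac{\alpha\beta}{\alpha+\beta}=1$, as you do, completes the argument.
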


\begin{lemma}\label{prop:inverse}
Suppose that $\mathcal{A}\mathcal{A}^*=\mathcal{I}_{q}$ and $\alpha(\alpha + \beta)\neq0$. Then, $\alpha\mathcal{I}+\beta \mathcal{A}^*\mathcal{A}$ is invertible and its inverse is given by $\frac{1}{\alpha}\mathcal{I}-\frac{\beta}{\alpha(\alpha+\beta)} \mathcal{A}^*\mathcal{A}$.
\end{lemma}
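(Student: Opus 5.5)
Since $\mathcal{A}\mathcal{A}^*=\mathcal{I}_q$, the operator $P:=\mathcal{A}^*\mathcal{A}$ satisfies $P^2=\mathcal{A}^*(\mathcal{A}\mathcal{A}^*)\mathcal{A}=P$, and it is self-adjoint. Hence $P$ is the orthogonal projector onto the range of $\mathcal{A}^*$, and its eigenvalues lie in $\{0,1\}$. The plan is to exploit this to verify directly that the proposed operator is a two-sided inverse.

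First, I will set $L:=\alpha\mathcal{I}+\beta P$ and $R:=\frac{1}{\alpha}\mathcal{I}-\frac{\beta}{\alpha(\alpha+\beta)}P$. Both are well defined because $\alpha\neq0$ and $\alpha+\beta\neq0$.

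Second, I will expand the product using $P^2=P$:
\begin{equation*}
LR=\mathcal{I}+\Big(\frac{\beta}{\alpha}-\frac{\beta}{\alpha+\beta}-\frac{\beta^2}{\alpha(\alpha+\beta)}\Big)P .
\end{equation*}
Over the common denominator $\alpha(\alpha+\beta)$, the coefficient of $P$ becomes $\beta(\alpha+\beta)-\alpha\beta-\beta^2=0$. Hence $LR=\mathcal{I}$. Since $L$ and $R$ are both polynomials in $P$, they commute, so $RL=\mathcal{I}$ as well.

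This shows that $L$ is invertible with $L^{-1}=R$. As a sanity check on the eigenspaces: on $\ker P$ the operator $L$ acts as $\alpha$ and $R$ as $1/\alpha$, while on $\operatorname{range}P$ the operator $L$ acts as $\alpha+\beta$ and $R$ as
\begin{equation*}
\frac{1}{\alpha}-\frac{\beta}{\alpha(\alpha+\beta)}=\frac{1}{\alpha+\beta}.
\end{equation*}
This also explains why the hypothesis $\alpha(\alpha+\beta)\neq0$ is exactly what is needed.

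There is no real obstacle here. The only point requiring care is recording the idempotence $P^2=P$, which follows from $\mathcal{A}\mathcal{A}^*=\mathcal{I}_q$; the rest is the scalar identity above.
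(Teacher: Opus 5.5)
Your proof is correct. Expanding $(\alpha\mathcal{I}+\beta P)\bigl(\frac{1}{\alpha}\mathcal{I}-\frac{\beta}{\alpha(\alpha+\beta)}P\bigr)$ with the idempotence $P^2=P$ is exactly the straightforward verification the paper has in mind: it omits the proof as routine, and uses the same fact that $\mathcal{A}^*\mathcal{A}$ is idempotent with eigenvalues in $\{0,1\}$ elsewhere.
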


\begin{proof}[Proof of Theorem \ref{thm:equal-min}]
First, it follows from $\alpha \mathcal{I} + \beta \mathcal{A}^* \mathcal{A} \succ 0$ that the function $Z \mapsto \Theta_{\alpha,\beta,\lambda}(X,\,Y,\,Z)$ is strongly convex. Thus, for any fixed $X$ and $Y$, the optimal solution $\widetilde{Z}_{XY}$ to problem $\min\limits_Z \{\Theta_{\alpha,\beta,\lambda}(X,\,Y,\,Z)\}$ uniquely exists, and can be obtained explicitly. Indeed, from the optimality condition, we have
\begin{equation*}
\alpha(\widetilde{Z}_{XY} - XY^\top) + \beta\mathcal{A}^* (\mathcal{A}(\widetilde{Z}_{XY}) - \bm{b}) = 0.
\end{equation*}
Since $\alpha \mathcal{I} + \beta \mathcal{A}^* \mathcal{A}$ is invertible (as $\alpha \mathcal{I} + \beta \mathcal{A}^* \mathcal{A} \succ 0$), it follows from Lemma \ref{prop:inverse} that
\begin{align*}
\widetilde{Z}_{XY}
&\textstyle=(\alpha \mathcal{I} + \beta \mathcal{A}^*\mathcal{A})^{-1}\left(\alpha XY^\top + \beta \mathcal{A}^*(\bm{b})\right)
=\left( \frac{1}{\alpha} \mathcal{I} - \frac{\beta}{\alpha(\alpha + \beta)} \mathcal{A}^*\mathcal{A}\right)\left( \alpha XY^\top + \beta \mathcal{A}^*(\bm{b})\right) \\
&\textstyle=\left( \mathcal{I} - \frac{\beta}{\alpha + \beta} \mathcal{A}^*\mathcal{A} \right) (XY^\top) +\left( \frac{\beta}{\alpha} \mathcal{A}^*(\bm{b}) - \frac{\beta^2}{\alpha(\alpha + \beta)} \mathcal{A}^*(\mathcal{A}\mathcal{A}^*)(\bm{b})\right) \\
&\textstyle=\left( \mathcal{I} - \frac{\beta}{\alpha + \beta} \mathcal{A}^*\mathcal{A} \right) (XY^\top) + \frac{\beta}{\alpha + \beta} \mathcal{A}^*(\bm{b}),
\end{align*}
where the last equality follows from $\mathcal{A}\mathcal{A}^*=\mathcal{I}_q$. This, together with Lemma \ref{lem:f=theta}, implies that $\mathcal{F}_{\lambda}(X,\,Y) = \Theta_{\alpha,\beta,\lambda}(X,\,Y,\,\widetilde{Z}_{XY})$. Then, we have that
\begin{equation*}
\begin{aligned}
\min_{X,\,Y,\,Z}\big\{\Theta_{\alpha,\beta,\lambda}(X,\,Y,\,Z)\big\}
&=\min_{X,\,Y}\left\{ \min_Z\left\{\Theta_{\alpha,\beta,\lambda}(X,\,Y,\,Z)\right\} \right\}
=\min_{X,\,Y}\left\{\Theta_{\alpha,\beta,\lambda}(X,\,Y,\,\widetilde{Z}_{XY}) \right\}  \\
&=\min_{X,\,Y}\big\{\mathcal{F}_{\lambda}(X,\,Y)\big\}.
\end{aligned}
\end{equation*}
This completes the proof.
\end{proof}

\subsection{Proof of Theorem \ref{thm:equal-sta}}\label{proof:equal-sta}

\begin{proof}
\textit{Statement (i).} If $(\widetilde{X},\,\widetilde{Y},\,\widetilde{Z})$  is a stationary point of $\Theta_{\alpha,\beta,\lambda}$, then we have that $0\in\partial\Theta_{\alpha,\beta,\lambda}(\widetilde{X}, \,\widetilde{Y}, \,\widetilde{Z})$, i.e.,
\begin{subnumcases}{}\label{eq:theta-opti}
0 \in \partial\Psi(\widetilde{X}) + \alpha(\widetilde{X}\widetilde{Y}^\top - \widetilde{Z})\widetilde{Y}+\lambda(\widetilde{X}-\widetilde{Y}), \label{eq:thetaopta} \\
0 \in \partial\Phi(\widetilde{Y}) + \alpha(\widetilde{X}\widetilde{Y}^\top - \widetilde{Z})^\top\widetilde{X}-\lambda(\widetilde{X}-\widetilde{Y}), \label{eq:thetaoptb} \\
0 = \alpha(\widetilde{Z} -\widetilde{X}\widetilde{Y}^\top) + \beta\mathcal{A}^*(\mathcal{A}(\widetilde{Z}) - \bm{b}). \label{eq:thetaoptc}
\end{subnumcases}
Since $\frac{1}{\alpha}+\frac{1}{\beta}=1$, we have $\alpha(\alpha+\beta)\neq0$ and hence $\alpha\mathcal{I}+\beta\mathcal{A}^*\mathcal{A}$ is invertible from Lemma \ref{prop:inverse}. Then, using the same arguments in the proof of Theorem \ref{thm:equal-min}, we see from \eqref{eq:thetaoptc} that $(\widetilde{X}, \widetilde{Y}, \widetilde{Z})$ satisfies \eqref{eq:Zbar=}. Moreover, using \eqref{eq:Zbar=} with proper caculations, we further have
\begin{eqnarray}
\widetilde{X}\widetilde{Y}^\top - \widetilde{Z}
\!\!&=&\!\! {\textstyle\frac{\beta}{\alpha+\beta}}\mathcal{A}^*(\mathcal{A}(\widetilde{X}\widetilde{Y}^\top) - \bm{b}),  \label{barXY-Z} \\
\mathcal{A}(\widetilde{Z}) - \bm{b}
\!\!&=&\!\! {\textstyle\frac{\alpha}{\alpha+\beta}}(\mathcal{A}(\widetilde{X}\widetilde{Y}^\top) - \bm{b}).  \label{barAZ-b}
\end{eqnarray}
Thus, substituting \eqref{barXY-Z} into \eqref{eq:thetaopta} and \eqref{eq:thetaoptb}, we see that
\begin{equation}\label{ffirstorderopt2}
\left\{
\begin{aligned}
0 &\in \partial \Psi(\widetilde{X}) + {\textstyle\frac{\alpha\beta}{\alpha+\beta}}\mathcal{A}^*(\mathcal{A}(\widetilde{X}\widetilde{Y}^\top) - \bm{b})\widetilde{Y} + \lambda(\widetilde{X} - \widetilde{Y}), \\
0 &\in \partial \Phi(\widetilde{Y}) + {\textstyle\frac{\alpha\beta}{\alpha+\beta}}(\mathcal{A}^*(\mathcal{A}(\widetilde{X}\widetilde{Y}^\top) - \bm{b}))^\top \widetilde{X} - \lambda(\widetilde{X} - \widetilde{Y}),
\end{aligned}
\right.
\end{equation}
which, together with $\frac{1}{\alpha}+\frac{1}{\beta}=1$ and hence $\frac{\alpha\beta}{\alpha+\beta}=1$, implies that $ (\widetilde{X}, \,\widetilde{Y}) $ is a stationary point of $ \mathcal{F}_{\lambda}$. This proves statement (i).

\textit{Statement (ii).} Since $(\widetilde{X}, \,\widetilde{Y})$ is a stationary point of $\mathcal{F}_{\lambda}$, it follows from $0\in\partial\mathcal{F}_\lambda(\widetilde{X}, \widetilde{Y})$ and $\frac{1}{\alpha}+\frac{1}{\beta}=1$ that \eqref{ffirstorderopt2} holds. By the definition of $\widetilde{Z}$ in \eqref{eq:Zbar=}, we further obtain \eqref{barXY-Z} and \eqref{barAZ-b}. Substituting \eqref{barXY-Z} into \eqref{ffirstorderopt2} yields \eqref{eq:thetaopta} and \eqref{eq:thetaoptb}. Moreover, combining \eqref{barXY-Z} and \eqref{barAZ-b} gives
$\alpha(\widetilde{Z} -\widetilde{X}\widetilde{Y}^\top) + \beta\mathcal{A}^*(\mathcal{A}(\widetilde{Z})-\bm{b})=0$.
Altogether, these relations imply that $(\widetilde{X}, \,\widetilde{Y}, \,\widetilde{Z})$ is a stationary point of $\Theta_{\alpha,\beta,\lambda}$. This proves statement (ii).
\end{proof}

\section{Missing Proofs in Section \ref{sec-convanal}}

\subsection{Proof of Lemma \ref{lem:Flambda-suffidescent}}\label{proof:lem:Flambda-suffidescent}

\begin{proof}
For any $(U, \,V)$, let
\begin{equation}\label{eq:W-Zk+1}
W := \left( \mathcal{I} - {\textstyle\frac{\beta}{\alpha+\beta}}\mathcal{A}^* \mathcal{A} \right) (UV^\top)
+ {\textstyle\frac{\beta}{\alpha+\beta}} \mathcal{A}^*(\bm{b}).
\end{equation}
It then follows from \eqref{Zkupdate} and Lemma \ref{lem:f=theta} that
\begin{equation*}
\mathcal{F}_{\lambda}(X^k, \,Y^k) = \Theta_{\alpha,\beta,\lambda}(X^k, \,Y^k, \,Z^k),
\quad
\mathcal{F}_{\lambda}(U, \,V) = \Theta_{\alpha,\beta,\lambda}(U, \,V, \,W).
\end{equation*}
Thus, to establish \eqref{eq:Flambdadescent}, we only need to consider $\Theta_{\alpha,\beta,\lambda}(U, \,V, \,W)-\Theta_{\alpha,\beta,\lambda}(X^k, \,Y^k, \,Z^k)$.

We start by noting that
\begin{equation*}
\textstyle\mathcal{A}^* \mathcal{A}(W)
=\left( \mathcal{A}^* \mathcal{A} - \frac{\beta}{\alpha+\beta}\mathcal{A}^*(\mathcal{A}\mathcal{A}^*)\mathcal{A} \right) (UV^\top)
+ \frac{\beta}{\alpha+\beta} \mathcal{A}^* (\mathcal{A} \mathcal{A}^*)(\bm{b}) 
= \frac{\alpha}{\alpha+\beta} \mathcal{A}^* \mathcal{A} (UV^\top)
+ \frac{\beta}{\alpha+\beta} \mathcal{A}^*(\bm{b}),
\end{equation*}
where the last equality follows from $\mathcal{A}\mathcal{A}^*=\mathcal{I}_q$ (Assumption \ref{assumA}(ii)). Then, it is straightforward to verify that
\begin{equation*}
\nabla_Z\Theta_{\alpha,\beta,\lambda}(U,\,V,\,W)
= \alpha (W - UV^\top) + \beta \mathcal{A}^* \mathcal{A}(W) - \beta \mathcal{A}^*(\bm{b}) = 0.
\end{equation*}
Moreover, since $\gamma$ is chosen such that $(\alpha + \gamma)\mathcal{I} + \beta \mathcal{A}^*\mathcal{A} \succeq 0$ (see \eqref{eq:definegamma}), we see that the function $Z \mapsto \Theta_{\alpha,\beta,\lambda}(U,\,V,\,Z) + \frac{\gamma}{2}\|Z - Z^k\|_F^2$ is convex and hence
\begin{equation*}
\begin{aligned}
&\Theta_{\alpha,\beta,\lambda}(U,\,V,\,Z^k) + \underbrace{ \frac{\gamma}{2}\|Z^k - Z^k\|_F^2}_{=\,0}\\
\geq &~\Theta_{\alpha,\beta,\lambda}(U,\,V,\,W)+  \frac{\gamma}{2}\|W - Z^k\|_F^2
+ \langle \underbrace{\nabla_Z \Theta_{\alpha,\beta,\lambda}(U,\,V,\,W)}_{=\,0} + \gamma(W - Z^k), \,Z^k - W \rangle,
\end{aligned}
\end{equation*}
which implies that
\begin{equation}\label{eq:thetadiffer}
\Theta_{\alpha,\beta,\lambda}(U,\,V,\,W)
-\Theta_{\alpha,\beta,\lambda}(U,\,V,\,Z^k)
\leq\frac{\gamma}{2}\|W-Z^k\|_F^2.
\end{equation}
Then, substituting \eqref{Zkupdate} and \eqref{eq:W-Zk+1} into \eqref{eq:thetadiffer}, we obtain
\begin{equation}\label{eq:thetadiffer1}
\begin{aligned}
&~~\Theta_{\alpha,\beta,\lambda}(U,\,V,\,W)-\Theta_{\alpha,\beta,\lambda}(U,\,V,\,Z^k) \\
&\textstyle \leq\frac{\gamma}{2}\left\|\left(\mathcal{I}-{\textstyle\frac{\beta}{\alpha+\beta}}\mathcal{A}^*\mathcal{A}\right)(U V^\top\!-\!X^k(Y^k)^\top)\right\|_F^2
\leq\frac{\gamma}{2}\left\|\mathcal{I}-{\textstyle\frac{\beta}{\alpha+\beta}}\mathcal{A}^*\mathcal{A}\right\|^2\cdot\left\|UV^\top\!-\!X^k(Y^k)^\top\right\|_F^2 \\
&\textstyle = \frac{\gamma\rho}{2}\big\|U(V\!-\!Y^k)^\top+(U\!-\!X^k)(Y^k)^\top\big\|_F^2
\leq\frac{\gamma\rho}{2}\left(\big\|U(V\!-\!Y^k)^\top\big\|_F+\big\|(U\!-\!X^k)(Y^k)^\top\big\|_F\right)^2\\
&\textstyle \stackrel{\rm(i)}{\leq} \frac{\gamma\rho}{2}\left(\|U\|\|V\!-\!Y^k\|_F
+\|Y^k\|\|U\!-\!X^k\|_F\right)^2
\stackrel{\rm(ii)}{\leq}\gamma\rho\left(\|U\|^2\|V\!-\!Y^k\|_F^2+\|Y^k\|^2\|U\!-\!X^k\|_F^2\right),
\end{aligned}
\end{equation}
where the equality follows from the definition of $\rho$ in \eqref{eq:definerho}, (i) follows from the relation $\|A B\|_{F}\leq\|A\|\|B\|_{F}$ and (ii) follows from the relation $(a+b)^{2}\leq 2a^{2}+2b^{2}$.

Next, we claim that
\begin{eqnarray}
\Theta_{\alpha,\beta,\lambda}(U, \,V, \,Z^k)
- \Theta_{\alpha,\beta,\lambda}(U, \,Y^k, \,Z^k)
\!\!&\leq&\!\! \frac{\alpha\|U\|^2-\sigma_k}{2}\,\|V-Y^k\|_F^2, \label{eq:thetadiffer2} \\[3pt]
\Theta_{\alpha,\beta,\lambda}(U, \,Y^k, \,Z^k)
- \Theta_{\alpha,\beta,\lambda}(X^k, \,Y^k, \,Z^k )
\!\!&\leq&\!\! \frac{\alpha\|Y^k\|^2-\mu_k}{2}\,\|U-X^k\|_F^2.  \label{eq:thetadiffer3}
\end{eqnarray}
Below, we will only prove \eqref{eq:thetadiffer2}, since the proof for \eqref{eq:thetadiffer3} can be done in a similar way. To this end, we consider the following three cases.
\begin{itemize}[leftmargin=0.5cm]
\item \textbf{Proximal:} In this case, we have
\begin{align*}
&\quad \Theta_{\alpha,\beta,\lambda}(U,\,V,\,Z^k)
-\Theta_{\alpha,\beta,\lambda}(U,\,Y^k,\,Z^k)  \\
&=\Phi(V) + \mathcal{H}_\alpha(U,\,V,\,Z^k) + \frac{\lambda}{2}\|U-V\|_F^2
-\Phi(Y^k) - \mathcal{H}_\alpha(U,\,Y^k,\,Z^k) - \frac{\lambda}{2}\|U-Y^k\|_F^2 \\
&=-\frac{\sigma_k}{2}\|V-Y^k\|_F^2
+ \left[\Phi(V)+\mathcal{H}_\alpha(U,\,V,\,Z^k)+\frac{\lambda}{2}\|U-V\|_F^2
+\frac{\sigma_k}{2}\|V-Y^k\|_F^2 \right] \\
&\qquad -\left[\Phi(Y^k) + \mathcal{H}_\alpha(U,\,Y^k,\,Z^k )
+\frac{\lambda}{2}\|U-Y^k\|_F^2 + \frac{\sigma_k}{2}\|Y^k-Y^k\|_F^2\right] \\
&\leq -\frac{\sigma_k}{2}\|V-Y^k\|_F^2,
\end{align*}
where the inequality follows from the definition of $V$ as a minimizer of \eqref{supro:V-prox}.

\item \textbf{Prox-linear:} In this case, we have
\begin{align*}
&\quad \Theta_{\alpha,\beta,\lambda}(U,\,V,\,Z^k )-\Theta_{\alpha,\beta,\lambda} (U,\,Y^k,\,Z^k )\\
&=\Phi(V)+\mathcal{H}_\alpha(U,\,V,\,Z^k)+\frac{\lambda}{2}\|U-V\|_F^2
-\Phi(Y^k)-\mathcal{H}_\alpha(U,\,Y^k,\,Z^k)-\frac{\lambda}{2}\|U-Y^k\|_F^2 \\
&\leq \Phi(V)+\frac{\lambda}{2}\|U-V\|_F^2-\Phi(Y^k)-\frac{\lambda}{2}\|U-Y^k\|_F^2
+ \langle\nabla_Y\mathcal{H}_\alpha(U,\,Y^k,\,Z^k),\,V-Y^k \rangle  \\
&\qquad + \frac{\alpha\|U\|^2}{2}\|V-Y^k\|_F^2 \\
&=\left[\Phi(V)+ \langle\nabla_Y\mathcal{H}_\alpha(U,\,Y^k,\,Z^k),\,V-Y^k\rangle
+\frac{\lambda}{2}\|U-V\|_F^2+\frac{\sigma_k}{2}\|V-Y^k\|_F^2 \right]\\
&\qquad -\left[\Phi(Y^k)+\langle\nabla_Y\mathcal{H}_\alpha(U,\,Y^k,\,Z^k),\,Y^k-Y^k \rangle + \frac{\lambda}{2}\|U-Y^k\|_F^2+\frac{\sigma_k}{2}\|Y^k-Y^k\|_F^2  \right] \\
&\qquad +\frac{\alpha\|U\|^2-\sigma_k}{2}\|V-Y^k\|_F^2  \\
&\leq \frac{\alpha\|U\|^2-\sigma_k}{2}\|V-Y^k\|_F^2,
\end{align*}
where the first inequality holds because $\nabla_{Y} \mathcal{H}_{\alpha}(X, \cdot, Z)$ is Lipschitz continuous with modulus $\alpha \|X\|^2$ and 
the last inequality follows from the definition of $V$ as a minimizer of \eqref{supro:V-proxlinear}.

\item \textbf{Hierarchical-prox:} In this case, for any $1\leq i\leq r$, we have
\begin{align*}
&\quad \Theta_{\alpha,\beta,\lambda}(U,\,\bm{v}_{j<i},\,\bm{v}_i,\,\bm{y}_{j>i}^k,\,Z^k)
-\Theta_{\alpha,\beta,\lambda}(U,\,\bm{v}_{j<i},\,\bm{y}_i^k,\,\bm{y}_{j>i}^k,\,Z^k) \\
&= \phi_i(\bm{v}_i)+\mathcal{H}_\alpha(U,\,\bm{v}_{j<i},\,\bm{v}_i,\,\bm{y}_{j>i}^k,\,Z^k)
+\frac{\lambda}{2}\|\bm{u}_i-\bm{v}_i\|^2 \\
&\qquad  -\phi_i(\bm{y}_i^k)-\mathcal{H}_\alpha(U,\,\bm{v}_{j<i},\,\bm{y}_i^k,\,\bm{y}_{j>i}^k,Z^k)
-\frac{\lambda}{2}\|\bm{u}_i-\bm{y}_i^k\|^2 \\
&=-\frac{\sigma_k}{2}\|\bm{v}_i-\bm{y}_i^k\|^2+\left[\phi_i(\bm{v}_i)
+\mathcal{H}_\alpha(U,\,\bm{v}_{j<i},\,\bm{v}_i,\,\bm{y}_{j>i}^k,\,Z^k)
+\frac{\lambda}{2}\|\bm{u}_i-\bm{v}_i\|^2
+\frac{\sigma_k}{2}\|\bm{v}_i-\bm{y}_i^k\|^2 \right]\\
&\qquad -\left[\phi_i(\bm{y}_i^k)+\mathcal{H}_\alpha(U,\bm{v}_{j<i},\bm{y}_i^k,\bm{y}_{j>i}^k,Z^k)
+\frac{\lambda}{2}\|\bm{u}_i\!-\!\bm{y}_i^k\|^2
+\frac{\sigma_k}{2}\|\bm{y}_i^k\!-\!\bm{y}_i^k\|^2\right]  \\
&\leq -\frac{\sigma_k}{2}\|\bm{v}_i\!-\!\bm{y}_i^k\|^2,
\end{align*}
where the inequality follows from the definition of $\bm{v}_{i}$ as a minimizer of  \eqref{supro:V-hier}. Summing the above relation from $i=r$ to $i=1$ and simplifying the resulting inequality, we obtain \eqref{eq:thetadiffer2}.
\end{itemize}

The inequality \eqref{eq:thetadiffer3} can be obtained via a similar argument. Then, summing \eqref{eq:thetadiffer1}, \eqref{eq:thetadiffer2}, \eqref{eq:thetadiffer3}, and using $\mathcal{F}_\lambda(U,\,V)=\Theta_{\alpha,\beta,\lambda}(U,\,V,\,W)$ and $\mathcal{F}_\lambda(X^{k},\,Y^{k})=\Theta_{\alpha,\beta,\lambda}(X^{k},\,Y^{k},\,Z^{k})$, we obtain \eqref{eq:Flambdadescent}. This completes the proof.
\end{proof}

\subsection{Proof of Lemma \ref{lem:A-welldefined}}\label{proof:lem:A-welldefined}

\begin{proof}
We prove this lemma by contradiction. Assume that there exists some $k\geq0$ such that the line search criterion \eqref{eq:A-linesearch} cannot be satisfied after finitely many inner iterations. Then, for this fixed $k$, we first claim that
\begin{equation}\label{eq:omega>flambda}
\mathcal{R}_k\geq\mathcal{F}_\lambda(X^k, \,Y^k).
\end{equation}
Indeed, if $k=0$, \eqref{eq:omega>flambda} holds trivially since $\mathcal{R}_0=\mathcal{F}_\lambda(X^0, \,Y^0)$. If $k\geq1$, by the definition of ${\mathcal{R}_k}$ in \eqref{eq:omegadefinition}, we have
\begin{equation*}
\begin{aligned}
\mathcal{R}_k
&=(1-p_k)\mathcal{R}_{k-1} + p_k\mathcal{F}_{\lambda}(X^k,\,Y^k) \\
&\textstyle\geq (1 - p_k)\left(\mathcal{F}_\lambda(X^k, \,Y^k)
+ \frac{c}{2}\left(\|X^k-X^{k-1}\|_F^2 + \|Y^k-Y^{k-1}\|_F^2\right)\right)
+ p_k\mathcal{F}_{\lambda}(X^k,\,Y^k) \\
&\geq \mathcal{F}_\lambda(X^k, \,Y^k),
\end{aligned}
\end{equation*}
where the first inequality follows from the fact that the line search condition \eqref{eq:A-linesearch} is satisfied at the $(k-1)$th iteration.

Next, from Steps (2\ref{algo-AcomputeU}) and (2\ref{algo-Abacktrack}) of Algorithm \ref{algo-A-NAUM}, we know that $\mu_{k}\leq\mu_{k}^{\max}=(\alpha+2\gamma\rho) \|Y^{k} \|^{2}+c$ and hence $\mu_{k}=\mu_{k}^{\max}$ must be attained after finitely many inner iterations. Let $n_{k}$ denote the number of inner iterations at which $\mu_{k}=\mu_{k}^{\max}$ is reached for the \textit{first} time. If $\mu_{k}^{0}\geq\mu_{k}^{\max}$, then $n_{k}=1$; otherwise, 
\begin{equation*}
\mu^{\min}\tau^{n_{k}-2}\leq\mu_{k}^{0}\tau^{n_{k}-2}<\mu_{k}^{\max},
\end{equation*}
which implies that
\begin{equation}\label{eq:etak-bound}
n_k\leq\left\lfloor \frac{\log\left(\mu_k^{\max}\right)-\log\left(\mu^{\min}\right)}{\log\tau}+2\right\rfloor,
\end{equation}
where $\lfloor a\rfloor$ denotes the largest integer smaller than or equal to $a$.
Then, by Step (2\ref{algo-Abacktrack}) of Algorithm \ref{algo-A-NAUM}, we have $U\equiv U_{\mu_{k}^{\max}}$ and $\sigma_{k}^{\max}=(\alpha+2\gamma\rho) \|U_{\mu_{k}^{\max}} \|^{2}+c$ after at most $n_{k}+1$ inner iterations, where $U_{\mu_{k}^{\max}}$ is computed by \eqref{supro:U-prox}, \eqref{supro:U-proxlinear} or \eqref{supro:U-hier} with $\mu_{k}=\mu_{k}^{\max}$. Moreover, $\sigma_{k}=\sigma_{k}^{\max}$ must also be attained after finitely many inner iterations. Let $\widehat{n}_{k}$ denote the number of inner iterations at which $\sigma_{k}=\sigma_{k}^{\max}$ is reached for the \textit{first} time. If $\sigma_{k}^{0}>\sigma_{k}^{\max}$, then $\widehat{n}_{k}=n_{k}$; if $\sigma_{k}^{0}=\sigma_{k}^{\max}$, then $\widehat{n}_{k}=0$; otherwise, we have
\begin{equation*}
 \sigma^{\min}\tau^{\widehat{n}_{k}-1} \leq \sigma_{k}^{0}\tau^{\widehat{n}_{k}-1} < \sigma_{k}^{\max},
\end{equation*}
which implies that
\begin{equation*}
\widehat{n}_{k} \leq \left\lfloor\frac{\log(\sigma_{k}^{\max}) - \log(\sigma^{\min})}{\log\tau} + 1  \right\rfloor.
\end{equation*}
Thus, after at most $\max\{n_{k}, \widehat{n}_{k}\} + 1$ inner iterations, we must have $V \equiv V_{\sigma_{k}^{\max}}$, where $V_{\sigma_{k}^{\max}}$ is computed by \eqref{supro:V-prox}, \eqref{supro:V-proxlinear} or \eqref{supro:V-hier} with $\sigma_{k} = \sigma_{k}^{\max}$. Therefore, after at most $\max\{n_{k}, \widehat{n}_{k}\} + 1$ inner iterations, we have from Lemma \ref{lem:Flambda-suffidescent} that
\begin{equation*}
\begin{aligned}
&\quad \mathcal{F}_\lambda(U_{\mu_{k}^{\max}}, \,V_{\sigma_{k}^{\max}})
- \mathcal{F}_\lambda (X^{k}, \,Y^{k} ) \\
&\leq -{\textstyle\frac{\mu_{k}^{\max} \,-\, (\alpha \,+\, 2\gamma\rho)\|Y^{k}\|^{2}}{2}}  \|U_{\mu_{k}^{\max}}-X^{k}\|_{F}^{2} - {\textstyle\frac{\sigma_{k}^{\max}\,-\,(\alpha \,+\, 2\gamma\rho)\|U_{\mu_{k}^{\max}}\|^{2}}{2}}\|V_{\sigma_{k}^{\max}}-Y^{k}\|_{F}^{2} \\
&= -{\textstyle\frac{c}{2}}\left(\|U_{\mu_{k}^{\max}}-X^{k}\|_{F}^{2}
+ \|V_{\sigma_{k}^{\max}}-Y^{k}\|_{F}^{2}\right),
\end{aligned}
\end{equation*}
where the equality follows from $\mu_k^{\max} = (\alpha + 2\gamma\rho)\|Y^k\|^2 +c$ and $\sigma_k^{\max} = (\alpha + 2\gamma\rho)\|U_{\mu_k^{\max}}\|^2 + c$.
This, together with \eqref{eq:omega>flambda}, further implies that
\begin{align*}
&\quad \mathcal{F}_\lambda (U_{\mu_{k}^{\max}}, \,V_{\sigma_{k}^{\max}}) - \mathcal{R}_k \\
&\leq\mathcal{F}_\lambda (U_{\mu_{k}^{\max}}, \,V_{\sigma_{k}^{\max}}) - \mathcal{F}_\lambda(X^{k}, \,Y^{k})
\leq - \frac{c}{2}\big(\|U_{\mu_{k}^{\max}}-X^{k}\|_{F}^{2}
+ \|V_{\sigma_{k}^{\max}}-Y^{k}\|_{F}^{2}\big),
\end{align*}
which means that for this $k$, the line search criterion \eqref{eq:A-linesearch} must be satisfied after at most $\max\{n_k,\widehat{n}_k\}+1$ inner iterations. This leads to a contradiction and completes the proof.
\end{proof}

\subsection{Proof of Lemma \ref{lem:dist-inequa}}\label{proof:lem:dist-inequa}

\begin{proof}
First, from the updating rule of $Z^k$ in \eqref{Zkupdate} and $\frac{1}{\alpha}+\frac{1}{\beta}=1$, we have
\begin{equation}\label{eq:A*A-b}
\textstyle
\mathcal{A}^*(\mathcal{A}(X^k (Y^k)^\top) -\bm{b}) = \frac{\alpha + \beta}{\beta} \left(X^k(Y^k)^\top-Z^k\right) = \alpha \left(X^k (Y^k)^\top - Z^k\right).
\end{equation}
We now characterize the subdifferential $\partial\mathcal{F}_\lambda(X^k, \,Y^k)$. For the partial subdifferential with respect to $X$ or $\bm{x}_i$, we consider the following three cases:
\begin{itemize}[leftmargin=0.5cm]
\item \textbf{Proximal:} In this case, we have
\begin{align*}
&\partial_X \mathcal{F}_\lambda(X^k,\,Y^k)
= \partial\Psi(X^k)
+ \mathcal{A}^*(\mathcal{A}(X^k (Y^k)^\top)-\bm{b})Y^k
+ \lambda(X^k-Y^k) \\
&= \partial\Psi(X^k) + \alpha(X^k(Y^k)^\top-Z^k)Y^k + \lambda(X^k-Y^k)\\
&= \partial\Psi(X^k) + \alpha(X^k(Y^{k-1})^\top\!-\!Z^{k-1})Y^{k-1}
+ \lambda(X^k\!-\!Y^{k-1}) + \bar{\mu}_{k-1}(X^k\!-\!X^{k-1})  
- \bar{\mu}_{k-1}(X^k\!-\!X^{k-1}) \\
&\quad - \lambda(Y^k-Y^{k-1})
+ \alpha(X^k(Y^k)^\top Y^k-X^k(Y^{k-1})^\top Y^{k-1})
- \alpha (Z^kY^k-Z^{k-1}Y^{k-1}) \\
&\ni -\bar{\mu}_{k-1}(X^k\!-\!X^{k-1})
- \lambda(Y^k\!-\!Y^{k-1})
+ \alpha X^k(Y^k\!-\!Y^{k-1})^\top Y^k
+ \alpha X^k(Y^{k-1})^\top (Y^k\!-\!Y^{k-1}) \\
&\quad - \alpha (Z^k-Z^{k-1})Y^k - \alpha Z^{k-1}(Y^k-Y^{k-1}),
\end{align*}
where the second equality follows from \eqref{eq:A*A-b} and the inclusion follows from the first-order optimality condition \eqref{supro:Uopt-prox} with $k=k-1$, $U=X^k$ and $\mu_{k-1}=\bar{\mu}_{k-1}$.

\item \textbf{Prox-linear:} In this case, we have
\begin{align*}
&\partial_X \mathcal{F}_\lambda(X^k,\,Y^k)
= \partial \Psi(X^k) + \mathcal{A}^*(\mathcal{A}(X^k (Y^k)^\top) - \bm{b}) Y^k+\lambda(X^k-Y^k) \\
&= \partial\Psi(X^k) + \alpha(X^k (Y^k)^\top - Z^k) Y^k + \lambda(X^k-Y^k)\\
&= \partial \Psi(X^k) + \alpha(X^{k-1}(Y^{k-1})^\top\!-\!Z^{k-1}) Y^{k-1}+\lambda(X^k\!-\!Y^{k-1}) + \bar{\mu}_{k-1}(X^k\!-\!X^{k-1}) 
- \bar{\mu}_{k-1}(X^k\!-\!X^{k-1}) \\
&\quad -\lambda(Y^k-Y^{k-1}) + \alpha (X^k (Y^k)^\top Y^k - X^{k-1} (Y^{k-1})^\top Y^{k-1}) - \alpha (Z^k Y^k - Z^{k-1} Y^{k-1}) \\
&\ni -\bar{\mu}_{k-1} (X^k\!-\!X^{k-1})
- \lambda(Y^k\!-\!Y^{k-1})
+ \alpha X^k(Y^k\!-\!Y^{k-1})^\top Y^k
+ \alpha X^{k-1} (Y^{k-1})^\top (Y^k\!-\!Y^{k-1})   \\
&\quad  + \alpha (X^k - X^{k-1}) (Y^{k-1})^\top Y^k
- \alpha (Z^k - Z^{k-1}) Y^k - \alpha Z^{k-1} (Y^k - Y^{k-1}),
\end{align*}
where the second equality follows from \eqref{eq:A*A-b} and the inclusion follows from the first-order optimality condition \eqref{supro:Uopt-proxlinear} with $k=k-1$, $U=X^k$ and $\mu_{k-1}=\bar{\mu}_{k-1}$.

\item \textbf{Hierarchical-prox:} In this case, for any $i = 1, 2, \ldots, r$, we have
\begin{align*}
&\partial_{\bm{x}_i} \mathcal{F}_\lambda(X^k, \,Y^k)
= \partial \psi_i(\bm{x}_i^k) + \mathcal{A}^*(\mathcal{A}(X^k (Y^k)^\top) - \bm{b}) \bm{y}_i^k+\lambda(\bm{x}_i^k-\bm{y}_i^k) \\
&= \partial \psi_i(\bm{x}_i^k) + \alpha (X^k (Y^k)^\top - Z^k) \bm{y}_i^k+\lambda(\bm{x}_i^k-\bm{y}_i^k) \\
&= \partial \psi_i(\bm{x}_i^k) + \alpha \left( {\textstyle\sum_{j=1}^i}\bm{x}_j^k (\bm{y}_j^{k-1})^\top + {\textstyle\sum_{j=i+1}^r}\bm{x}_j^{k-1} (\bm{y}_j^{k-1})^\top - Z^{k-1}\right) \bm{y}_i^{k-1} + \lambda(\bm{x}_i^k - \bm{y}_i^{k-1}) \\
&\quad  + \bar{\mu}_{k-1} (\bm{x}_i^k - \bm{x}_i^{k-1}) + \alpha {\textstyle\sum_{j=1}^i} (\bm{x}_j^{k-1} - \bm{x}_j^k) (\bm{y}_j^{k-1})^\top \bm{y}_i^{k-1}
+ \alpha (X^k (Y^k)^\top \bm{y}_i^k - X^{k-1} (Y^{k-1})^\top \bm{y}_i^{k-1}) \\
&\quad - \alpha (Z^k \bm{y}_i^k - Z^{k-1} \bm{y}_i^{k-1}) 
- \bar{\mu}_{k-1} (\bm{x}_i^k - \bm{x}_i^{k-1}) - \lambda(\bm{y}_i^k - \bm{y}_i^{k-1}) \\
&\ni \alpha{\textstyle\sum_{j=1}^i}(\bm{x}_j^{k-1}-\bm{x}_j^k) (\bm{y}_j^{k-1})^\top \bm{y}_i^{k-1}
+ \alpha (X^k - X^{k-1}) (Y^k)^\top \bm{y}_i^k 
+ \alpha X^{k-1} (Y^k - Y^{k-1})^\top \bm{y}_i^k \\
&\quad + \alpha X^{k-1} (Y^{k-1})^\top (\bm{y}_i^k - \bm{y}_i^{k-1}) 
- \alpha (Z^k-Z^{k-1})\bm{y}_i^k
- \alpha Z^{k-1}(\bm{y}_i^k-\bm{y}_i^{k-1}) \\
&\quad - \bar{\mu}_{k-1}(\bm{x}_i^k-\bm{x}_i^{k-1})
- \lambda(\bm{y}_i^k-\bm{y}_i^{k-1}),
\end{align*}
where the second equality follows from \eqref{eq:A*A-b} and the inclusion follows from the first-order optimality condition \eqref{supro:Uopt-hier}  with $k=k-1$, $\bm{u}_i=\bm{x}_i^k$ and $\mu_{k-1}=\bar{\mu}_{k-1}$.
\end{itemize}

Similarly, for the partial subdifferential with respect to $Y$ or $\bm{y}_i$, the following holds.
\begin{itemize}
\item \textbf{Proximal:} In this case, we have
\begin{equation*}
\begin{aligned}
\partial_Y \mathcal{F}_\lambda(X^k,\,Y^k)
&= \partial \Phi(Y^k) + (\mathcal{A}^*(\mathcal{A}(X^k (Y^k)^\top) - \bm{b}))^\top X^k-\lambda(X^k-Y^k) \\
&\ni -\bar{\sigma}_{k-1} (Y^k - Y^{k-1}) - \alpha (Z^k - Z^{k-1})^\top X^k.
\end{aligned}
\end{equation*}

\vspace{-2mm}
\item \textbf{Prox-linear:} In this case, we have
\begin{equation*}
\begin{aligned}
\partial_Y \mathcal{F}_\lambda(X^k, Y^k) 
&= \partial \Phi(Y^k) + (\mathcal{A}^*(\mathcal{A}(X^k (Y^k)^\top) - \bm{b}))^\top X^k -\lambda(X^k-Y^k) \\
&\ni -\bar{\sigma}_{k-1} (Y^k - Y^{k-1}) + \alpha (Y^k - Y^{k-1}) (X^k)^\top X^k - \alpha (Z^k - Z^{k-1})^\top X^k.
\end{aligned}
\end{equation*}

\vspace{-2mm}
\item \textbf{Hierarchical-prox:} In this case, for any $i = 1, 2, \ldots, r$, we have
\begin{equation*}
\begin{aligned}
&\partial_{\bm{y}_i} \mathcal{F}_\lambda(X^k, Y^k)= \partial \phi_i(\bm{y}_i^k)
+ (\mathcal{A}^*(\mathcal{A}(X^k (Y^k)^\top) - \bm{b}))^\top \bm{x}_i^k -\lambda(\bm{x}_i^k-\bm{y}_i^k) \\
&\ni \alpha {\textstyle\sum_{j=1}^i} (\bm{y}_j^{k-1}\!-\!\bm{y}_j^k) (\bm{x}_j^k)^\top \bm{x}_i^k + \alpha (Y^k\!-\!Y^{k-1}) (X^k)^\top \bm{x}_i^k 
- \alpha (Z^k\!-\!Z^{k-1})^\top \bm{x}_i^k - \bar{\sigma}_{k-1} (\bm{y}_i^k\!-\!\bm{y}_i^{k-1}).
\end{aligned}
\end{equation*}
\end{itemize}

Finally, note that for each $i$, $\|\bm{x}_i^k - \bm{x}_i^{k-1}\| \leq \|X^k - X^{k-1}\|_F$ and $\|\bm{y}_i^k - \bm{y}_i^{k-1}\| \leq \|Y^k - Y^{k-1}\|_F$. Moreover, from the updating rule of $Z^k$ in \eqref{Zkupdate},
\begin{align*}
\|Z^k - Z^{k-1}\|_F
&\leq \left\|{\textstyle\left(\mathcal{I} - \frac{\beta}{\alpha + \beta} \mathcal{A}^* \mathcal{A}\right)} \left(X^k (Y^k)^\top - X^{k-1} (Y^{k-1})^\top\right)\right\|_F \\
&\leq\sqrt{\rho}\,\|X^k\| \|Y^k - Y^{k-1}\|_F
+ \sqrt{\rho}\,\|Y^{k-1}\| \|X^k - X^{k-1}\|_F,
\end{align*}
where $\rho$ is defined in \eqref{eq:definerho}. Combining the above estimates with the boundedness of $\{X^k\}$, $\{Y^k\}$, $\{\bar{\mu}_k\}$ and $\{\bar{\sigma}_k\}$ (see Proposition \ref{prop:Omegak-descentbehavior}(v)), we obtain \eqref{eq:dist-inequa}. This completes the proof.
\end{proof}

\subsection{Proof of Theorem \ref{thm:fun_rate}}\label{proof:thm:fun_rate}

\begin{proof}
Let $\Delta^k_\mathcal{R}:=\mathcal{R}_k-\zeta$ for all $k\geq0$, it follows from Proposition \ref{prop:Omegak-descentbehavior}(ii) that $\{\Delta^k_\mathcal{R}\}$ is non-increasing and $\Delta^k_\mathcal{R}\geq0$. Then, for any $k\geq1$, we have that
\begin{equation}\label{Fxk-zeta}
\begin{aligned}
|\mathcal{F}_\lambda(X^k,\,Y^k)-\zeta|
&=\left|\mathcal{R}_{k-1}+{\textstyle\frac{1}{p_k}}(\mathcal{R}_{k}-\mathcal{R}_{k-1})-\zeta\right|
\leq\Delta^{k-1}_\mathcal{R}
+ {\textstyle\frac{1}{p_k}}(\mathcal{R}_{k-1}-\mathcal{R}_{k})\\
&=\Delta^{k-1}_\mathcal{R}
+ {\textstyle\frac{1}{p_k}}(\Delta^{k-1}_\mathcal{R}-\Delta^{k}_\mathcal{R})
\leq(1+\textstyle\frac{1}{p_{\min}})\Delta^{k-1}_\mathcal{R}
= d_1\Delta_\mathcal{R}^{k-1},
\end{aligned}
\end{equation}
where $d_1:=1+\textstyle\frac{1}{p_{\min}}$, the first equality follows from $\mathcal{F}_\lambda(X^k,\,Y^k)=\mathcal{R}_{k-1}+{\textstyle\frac{1}{p_k}}(\mathcal{R}_{k}-\mathcal{R}_{k-1})$ by the updating rule of $\mathcal{R}_{k}$, the last inequality follows from $\Delta^k_\mathcal{R}\geq0$ and $p_k\geq p_{\min}>0$ for all $k\geq 0$.

With \eqref{Fxk-zeta} in hand, we can characterize the convergence rate of $\{|\mathcal{F}_\lambda(X^k,\,Y^k)-\zeta|\}$ by analyzing the rate of $\{\Delta_\mathcal{R}^{k}\}$. To this end, we first consider the case where $\Delta_\mathcal{R}^{K_0}=0$ for some $K_0\geq0$. Since $\{\Delta_\mathcal{R}^{k}\}$ is non-increasing, it follows that $\Delta_\mathcal{R}^{k}=0$ for all $k \geq K_0$. This, together with \eqref{Fxk-zeta}, immediately proves all statements. For now on, we consider $\Delta_\mathcal{R}^{k}>0$ for all $k\geq0$.

First, we see from Theorem \ref{thm:fullconverge} that the whole sequence $\{(X^k, \,Y^k)\}$ is convergent. Let $(\widetilde{X},\,\widetilde{Y})$ be the limit point, we can conclude from Theorem \ref{thm:A-convergence} that $\mathcal{F}_\lambda(\widetilde{X},\,\widetilde{Y})=\zeta$. This, together with the assumption that $\mathcal{F}_\lambda$ is a KL function with an exponent $\theta$, implies that, there exist $\nu>0$, a neighborhood $\mathcal{V}$  of $(\widetilde{X},\,\widetilde{Y})$ and $\varphi\in\Phi_{\nu}$ such that 
\begin{equation}\label{phi-dist}
\varphi'\big(\mathcal{F}_\lambda(X,\,Y)-\zeta\big)
\cdot\mathrm{dist}\big(0,\,\partial \mathcal{F}_\lambda(X,\,Y)\big)\geq1,
~~\text{with}~~\varphi(s)=\tilde{a}s^{1-\theta}~\text{for some}~\tilde{a}>0
\end{equation}
for all $(X,\,Y)$ satisfying $(X,\,Y)\in \mathcal{V}$ and ${\zeta}<\mathcal{F}_\lambda(X,\,Y)<\zeta+\nu$. Next, we recall from Lemma \ref{lem:dist-inequa} that there exists $d>0$ such that
\begin{equation}\label{eq:dist-inequa1}
\operatorname{dist}\big(0, \,\partial\mathcal{F}_{\lambda}(X^k,\,Y^k)\big) 
\leq d\big( \|X^k - X^{k-1}\|_F + \|Y^k - Y^{k-1}\|_F \big),
\quad \forall\,k\geq 0.
\end{equation}
We also see from \eqref{omegaupdate} and $\mathcal{F}_\lambda(X^{k+1},\,Y^{k+1})\leq\mathcal{R}_{k+1}$ (by Proposition \ref{prop:Omegak-descentbehavior}(i)) that
\begin{equation}\label{Rkdiff-xkdiff-square}    
d_2\big(\|X^{k+1}-X^{k}\|_F^2 
+ \|Y^{k+1}-Y^{k}\|_F^2\big)
\leq\mathcal{R}_k - \mathcal{R}_{k+1}
=\Delta^k_\mathcal{R} - \Delta^{k+1}_\mathcal{R}
\leq \mathcal{R}_k-\mathcal{F}_\lambda(X^{k+1},\,Y^{k+1})
\end{equation}
for any $k\geq0$, where $d_2:=\frac{cp_{\min}}{2}$. Moreover, since $(X^k,\,Y^k)$ converges to $(\widetilde{X},\,\widetilde{Y})$ and the sequence $\{\mathcal{R}_k\}$ converges monotonically to $\zeta$ (by Proposition \ref{prop:Omegak-descentbehavior}(ii)), there exists an integer $K_1>0$ such that $\zeta<\mathcal{R}_k<\zeta+\min\left\{\nu,\, 1,\, \frac{d_2}{2\tilde{a}^2d^2}\right\}$ and $(X^k,\,Y^k)\in \mathcal{V}$ whenever $k\geq K_1$. In the following, for notational simplicity, let $\Delta^{k}_{\mathcal{F}_\lambda}:=\mathcal{F}_\lambda(X^k,\,Y^k)-\zeta$. Then, we see that $\Delta^{k}_{\mathcal{F}_\lambda}\leq\Delta^k_{\mathcal{R}}<1$ and $\Delta_\mathcal{R}^{k-1}<\frac{d_2}{2\tilde{a}^2d^2}$ hold for all $k\geq K_1+1$. With these preparations, we next proceed to prove the desired results and divide the proof into three steps.

\textit{\textbf{Step 1}}. First, we claim that for any $k\geq K_1$, the following statements hold:
\begin{itemize}
\item [(1a)] If $\mathcal{F}_\lambda(X^k,\,Y^k)\leq\zeta$, there exists $\rho_1\in(0,1)$ such that $\Delta_{\mathcal{R}}^{k}\leq\rho_1\Delta_{\mathcal{R}}^{k-1}$;
\item[(1b)] If $\mathcal{F}_\lambda(X^k,\,Y^k)>\zeta$, there exists $a_1>0$ such that $\big(\Delta_{\mathcal{F}_\lambda}^k\big)^{2\theta}\leq a_1\big(\Delta_\mathcal{R}^{k-1}-\Delta_\mathcal{R}^k\big)$.
\end{itemize}

 We first consider $\mathcal{F}_\lambda(X^k,\,Y^k)\leq\zeta$. In this case, together with the updating rule of $\mathcal{R}_k$, $0<p_{\min}\leq p_k\leq1$ and the fact that $\{\Delta^k_{\mathcal{R}}\}$ is non-increasing, we have that
\begin{align*}
\Delta_{\mathcal{R}}^k
&=p_k\mathcal{F}_\lambda(X^k,\,Y^k)
+ (1-p_k)\mathcal{R}_{k-1}-\zeta=
p_k\big(\mathcal{F}_\lambda(X^k,\,Y^k)-\zeta\big)
+ (1-p_k)\big(\mathcal{R}_{k-1}-\zeta\big)\\
&\leq(1-p_k)\Delta_{\mathcal{R}}^{k-1} 
\leq(1-p_{\min})\Delta_{\mathcal{R}}^{k-1}
=\rho_1\Delta^{k-1}_{\mathcal{R}},
\end{align*}
where $\rho_1:=1-p_{\min}\in(0,1)$. This shows that statement (1a) holds.

We next consider $\mathcal{F}_\lambda(X^k,\,Y^k)>\zeta$. In this case, it follows from Proposition \ref{prop:Omegak-descentbehavior}(i) and $k\geq K_1$ that $\zeta<\mathcal{F}_\lambda(X^k,\,Y^k)\leq\mathcal{R}_k<\zeta+\nu$ and $(X^k,\,Y^k)\in \mathcal{V}$. Thus, by \eqref{phi-dist}, we have
\begin{equation*}
\varphi'\big(\mathcal{F}_\lambda(X^k,\,Y^k)-\zeta\big)
\cdot\operatorname{dist}\big(0, \partial \mathcal{F}_\lambda(X^k,\,Y^k)\big) \geq 1.
\end{equation*}
Moreover, we see that
\begin{equation*}
\begin{aligned}
1&\leq\varphi'\big(\mathcal{F}_\lambda(X^k,\,Y^k)-\zeta\big)
\cdot\operatorname{dist}\big(0, \partial \mathcal{F}_\lambda(X^k,\,Y^k)\big)\\
&\leq \tilde{a}(1-\theta)\cdot(\Delta_{\mathcal{F}_\lambda}^k)^{-\theta}
\cdot d\big( \|X^k - X^{k-1}\|_F + \|Y^k - Y^{k-1}\|_F \big) \\
&\leq\tilde{a}d(1-\theta)\cdot(\Delta_{\mathcal{F}_\lambda}^k)^{-\theta}\cdot\sqrt{2\big( \|X^k - X^{k-1}\|_F^2 + \|Y^k - Y^{k-1}\|_F^2 \big)} \\
&\leq \sqrt{2/d_2}\tilde{a}d(1-\theta)\cdot(\Delta_{\mathcal{F}_\lambda}^k)^{-\theta}\cdot\sqrt{\mathcal{R}_{k-1}-\mathcal{R}_k} \\
&=\sqrt{2/d_2}\tilde{a}d(1-\theta)\cdot(\Delta_{\mathcal{F}_\lambda}^k)^{-\theta}\cdot\sqrt{\Delta_\mathcal{R}^{k-1}-\Delta_\mathcal{R}^k},
\end{aligned}
\end{equation*}
where the second inequality follows from \eqref{eq:dist-inequa1} and the last inequality follows from \eqref{Rkdiff-xkdiff-square}. This inequality further yields that
\begin{equation*}
\big(\Delta_{\mathcal{F}_\lambda}^k\big)^{2\theta}
\leq (2/d_2)\tilde{a}^2d^2(1-\theta)^2
\big(\Delta_\mathcal{R}^{k-1}-\Delta_\mathcal{R}^k\big)
= a_1\big(\Delta_\mathcal{R}^{k-1}-\Delta_\mathcal{R}^k\big),
\end{equation*}
where $a_1:=(2/d_2)\tilde{a}^2d^2(1-\theta)^2>0$. This shows that statement (1b).

\textit{\textbf{Step 2}}. Now, we claim that for any $k\geq K_1+1$, the following statements hold:
\begin{itemize}
\item [(2a)] If $\theta=0$, $\mathcal{F}_\lambda(X^k,\,Y^k)\leq\zeta$ and there exists $\rho_2\in(0,1)$ such that $\Delta_{\mathcal{R}}^{k}\leq\rho_2\Delta_{\mathcal{R}}^{k-1}$;
\item[(2b)] If $\theta\in(0,\frac{1}{2}]$, there exists $\rho_3\in(0,1)$ such that $\Delta_{\mathcal{R}}^{k}\leq\rho_3\Delta_{\mathcal{R}}^{k-1}$;
\item[(2c)] If $\theta\in(\frac{1}{2},1)$, there exists $a_2>0$ such that $(\Delta_{\mathcal{R}}^{k})^{1-2\theta}-(\Delta_{\mathcal{R}}^{k-1})^{1-2\theta}\geq a_2$.
\end{itemize}

\textit{Statement (2a)}. Suppose that $\theta=0$, we consider the following two cases.

We first consider $\mathcal{F}_\lambda(X^k,\,Y^k)\leq\zeta$. In this case, for any $k\geq K_1+1$, it follows from statement (1a) in \textit{\textbf{Step 1}} that there exists $\rho_1\in(0,1)$ such that $\Delta_{\mathcal{R}}^{k}\leq\rho_1\Delta_{\mathcal{R}}^{k-1}$. 

We next consider $\mathcal{F}_\lambda(X^k,\,Y^k)>\zeta$. In this case, we see from statement (1b) with $\theta=0$ in \textit{\textbf{Step 1}} and $\Delta_{\mathcal{R}}^{k}\geq0$ that, for any $k\geq K_1+1$,
\begin{equation*}
\textstyle \Delta_\mathcal{R}^{k-1}\geq\Delta_\mathcal{R}^{k-1}-\Delta_\mathcal{R}^k
\geq\frac{1}{a_1}=\frac{d_2}{2\tilde{a}^2d^2},
\end{equation*}
which contradicts to the fact that $\Delta_\mathcal{R}^{k-1}<\frac{d_2}{2\tilde{a}^2d^2}$ holds whenever $k\geq K_1+1$. Thus, this case cannot happen. Combining with these two cases, we prove statement (2a).

\textit{Statement (2b)}. Suppose that $\theta\in(0,\frac{1}{2}]$. We consider the following two cases. 

We first consider $\mathcal{F}_\lambda(X^k,\,Y^k)\leq\zeta$. In this case, for any $k\geq K_1+1$, it follows from the statement (1a) in \textit{\textbf{Step 1}} that there exists $\rho_1\in(0,1)$ such that $\Delta_{\mathcal{R}}^{k}\leq\rho_1\Delta_{\mathcal{R}}^{k-1}$, which gives the desired result. 

We next consider $\mathcal{F}_\lambda(X^k,\,Y^k)>\zeta$. In this case, for any $k\geq K_1+1$, it follows from $0<\Delta^{k}_{\mathcal{F}_\lambda}\leq\Delta^k_{\mathcal{R}}<1$, $2\theta\in(0,1]$ and statement (1b) in \textit{\textbf{Step 1}} that
\begin{equation*}
\textstyle \Delta_{\mathcal{F}_\lambda}^k\leq(\Delta_{\mathcal{F}_\lambda}^k)^{2\theta} \leq a_1(\Delta_\mathcal{R}^{k-1}-\Delta_\mathcal{R}^k)\leq a_1(\Delta_\mathcal{R}^{k-1}-\Delta_{\mathcal{F}_\lambda}^k)
\quad\Longrightarrow\quad
\Delta_{\mathcal{F}_\lambda}^k\leq\frac{a_1}{1+a_1}\Delta_\mathcal{R}^{k-1}.
\end{equation*}
This, along with the updating rule of $\mathcal{R}_k$ and $p_k\in[p_{\min},1]$ yields that
\begin{equation*}
\begin{aligned}
\Delta_\mathcal{R}^k
&= p_k\mathcal{F}_\lambda(X^k,\,Y^k)+(1-p_k)\mathcal{R}_{k-1}-\zeta
= p_k\Delta_{\mathcal{F}_\lambda}^k+(1-p_k)\Delta_\mathcal{R}^{k-1}\\
&\leq {\textstyle\left(\frac{a_1}{1+a_1}p_k+1-p_k\right)}\Delta_\mathcal{R}^{k-1}
= {\textstyle\left(1-\frac{p_k}{1+a_1}\right)}\Delta_\mathcal{R}^{k-1} 
\leq {\textstyle\left(1-\frac{p_{\min}}{1+a_1}\right)}\Delta_\mathcal{R}^{k-1}.
\end{aligned}
\end{equation*}
Combining with the above two cases, we can conclude that $\Delta_{\mathcal{R}}^{k}\leq\rho_3\Delta_{\mathcal{R}}^{k-1}$, where $\rho_3:=\max\left\{\rho_1,1-\frac{p_{\min}}{1+a_1}\right\}\in(0,1)$. This shows that statement (2b) holds.

\textit{Statement (2c)}. Suppose that $\theta\in(\frac{1}{2},1)$, we consider the following two cases.

We first consider $\mathcal{F}_\lambda(X^k,\,Y^k)\leq\zeta$. In this case, for any $k\geq K_1+1$, it follows from statement (1a) in \textit{\textbf{Step 1}} that there exists $\rho_1\in(0,1)$ such that $\Delta_{\mathcal{R}}^{k}\leq\rho_1\Delta_{\mathcal{R}}^{k-1}$. Since $1-2\theta<0$ and $\Delta_{\mathcal{R}}^{k-1},\,\Delta_{\mathcal{R}}^{k}>0$, we further have that
\begin{equation*}
\big(\Delta_{\mathcal{R}}^{k}\big)^{1-2\theta}
\geq \rho_1^{1-2\theta}\big(\Delta_{\mathcal{R}}^{k-1}\big)^{1-2\theta},
\end{equation*}
which implies that
\begin{equation*}
\big(\Delta_{\mathcal{R}}^{k}\big)^{1-2\theta}
- \big(\Delta_{\mathcal{R}}^{k-1}\big)^{1-2\theta}
\geq \big({\rho_1^{1-2\theta}}-1\big)\big(\Delta_{\mathcal{R}}^{k-1}\big)^{1-2\theta}
\geq\big({\rho_1^{1-2\theta}}-1\big)\big(\Delta_{\mathcal{R}}^{K_1}\big)^{1-2\theta}
>0,
\end{equation*}
where the second inequaliy follows from the facts that $\{\Delta_{\mathcal{R}}^k\}$  is non-increasing, $k\geq K_1+1$, $\rho_1\in(0,1)$, $1-2\theta<0$, and ${\rho_1^{1-2\theta}}-1>0$. This gives the desired result.

We next consider $\mathcal{F}_\lambda(X^k,\,Y^k)>\zeta$. In this case, for any $k\geq K_1+1$, $\Delta_{\mathcal{F}_\lambda}^k>0$ and it follows from statement (1b) in \textit{\textbf{Step 1}} that
\begin{equation}\label{frac-1-a1-leq}
a_1^{-1}\leq\big(\Delta_{\mathcal{F}_\lambda}^k\big)^{-2\theta} \big(\Delta_{\mathcal{R}}^{k-1}-\Delta_{\mathcal{R}}^{k}\big).
\end{equation}
Next, we define $g(s):=s^{-2 \theta}$ for $s \in(0, \infty)$. It is easy to see that $g$ is non-increasing. Then, for any $k \geq K_1+1$, we further consider the following two cases.
\begin{itemize}
\item If $g(\Delta_{\mathcal{F}_\lambda}^k) \leq 2 g(\Delta_{\mathcal{R}}^{k-1})$, it follows from \eqref{frac-1-a1-leq} that
\begin{equation*}
\begin{aligned}
a_1^{-1}
&\leq g(\Delta_{\mathcal{F}_\lambda}^k)\big(\Delta_{\mathcal{R}}^{k-1}
-\Delta_{\mathcal{R}}^{k}\big)
\leq 2g(\Delta_{\mathcal{R}}^{k-1}) \big(\Delta_{\mathcal{R}}^{k-1}-\Delta_{\mathcal{R}}^{k}\big) \\
&\textstyle \leq 2 \int_{\Delta_{\mathcal{R}}^{k}}^{\Delta_{\mathcal{R}}^{k-1}} g(s)\,\mathrm{d}s
=\frac{2(\Delta_{\mathcal{R}}^{k-1})^{1-2\theta}\,-\,2(\Delta_{\mathcal{R}}^{k})^{1-2\theta}}{1-2 \theta},
\end{aligned}
\end{equation*}
which, together with $1-2\theta<0$, implies that
\begin{equation*}
(\Delta_{\mathcal{R}}^{k})^{1-2\theta}
-(\Delta_{\mathcal{R}}^{k-1})^{1-2\theta}
\geq (2\theta-1)/(2a_1).
\end{equation*}

\item If $g(\Delta_{\mathcal{F}_\lambda}^k) > 2 g(\Delta_{\mathcal{R}}^{k-1})$, it follows that $\Delta_{\mathcal{F}_\lambda}^k<2^{-\frac{1}{2\theta}}\Delta_{\mathcal{R}}^{k-1}$.
This, along with the updating rule of $\mathcal{R}_k$ and $p_k\in[p_{\min},1]$, yields that
\begin{equation*}
\begin{aligned}
\Delta_\mathcal{R}^k
&= p_k\mathcal{F}_\lambda(X^k,\,Y^k)+(1-p_k)\mathcal{R}_{k-1}-\zeta
= p_k\Delta_{\mathcal{F}_\lambda}^k+(1-p_k)\Delta_\mathcal{R}^{k-1}\\
&\leq \big[1-\big(1-2^{-\frac{1}{2\theta}}\big)p_k\big]\Delta_\mathcal{R}^{k-1}
\leq \big[1-\big(1-2^{-\frac{1}{2\theta}}\big)p_{\min}\big]\Delta_\mathcal{R}^{k-1}
=d_3\Delta_\mathcal{R}^{k-1},
\end{aligned}
\end{equation*}
where $d_3:=1-\big(1-2^{-\frac{1}{2\theta}}\big)p_{\min}\in(0,1)$. This, together with $\Delta_{\mathcal{R}}^{k-1}$, $\Delta_{\mathcal{R}}^{k}>0$, implies that
\begin{equation*}
(\Delta_{\mathcal{R}}^{k})^{1-2\theta}-(\Delta_{\mathcal{R}}^{k-1})^{1-2\theta}
\geq(d_3^{1-2\theta}-1)(\Delta_{\mathcal{R}}^{k-1})^{1-2\theta}
\geq(d_3^{1-2\theta}-1)(\Delta_{\mathcal{R}}^{K_1})^{1-2\theta}>0,
\end{equation*}
where the second inequality follows from the facts that $\{\Delta_{\mathcal{R}}^k\}$ is non-increasing, $k\geq K_1+1$, $d_3\in(0,1)$, $1-2\theta<0$ and $d_3^{1-2\theta}-1>0$.
\end{itemize}

In view of the above, we have that
\begin{equation*}
(\Delta_{\mathcal{R}}^{k})^{1-2\theta}-(\Delta_{\mathcal{R}}^{k-1})^{1-2\theta}
\geq a_2:=\min\left\{\big({\rho_1^{1-2\theta}}-1\big)(\Delta_{\mathcal{R}}^{K_1})^{1-2\theta},
\,\textstyle\frac{2\theta-1}{2a_1},
\,\big({d_3^{1-2\theta}}-1\big)(\Delta_{\mathcal{R}}^{K_1})^{1-2\theta}\right\}.
\end{equation*}
This shows that statement (2c) holds.

\textit{\textbf{Step 3}}. We are now ready to prove our final results.

\textit{Statement (i)}. Suppose that $\theta=0$. For any $k\geq K_1+2$, combining statement (2a) in \textit{\textbf{Step 2}}, \eqref{Fxk-zeta} and the fact that $\{\Delta_{\mathcal{R}}^k\}$ is non-increasing, we have that $\mathcal{F}_\lambda(X^k,\,Y^k)\leq\zeta$ and
\begin{equation*}
\begin{aligned}
\max\left\{|\mathcal{F}_\lambda(X^k,\,Y^k)-\zeta|,\mathcal{R}_k-\zeta\right\}
&\leq \max\{d_1,1\}\Delta_\mathcal{R}^{k-1}
\leq \max\{d_1,1\}\rho_2^{k-K_1-1} \Delta_\mathcal{R}^{K_1}=c_1\eta_1^k,
\end{aligned}
\end{equation*}
where $c_1:=\max\{d_1,1\}\rho_2^{-K_1-1} \Delta_\mathcal{R}^{K_1}$ and $\eta_1:=\rho_2\in(0,1)$. This also implies that $\zeta-c_1\eta_1^k\leq \mathcal{F}_\lambda(X^k,\,Y^k)\leq \zeta$, and thus proves statement (i).

\textit{Statement (ii)}. Suppose that $\theta\in(0,\frac{1}{2}]$. Using similar arguments as in the above case, we can obtain the desired result in statement (ii).

\textit{Statement (iii)}. Suppose that $\theta\in(\frac{1}{2},1)$. For any $k\geq K_1+1$, combining statement (2c) in \textit{\textbf{Step 2}} and the nonnegativity of $\{\Delta_{\mathcal{R}}^k\}$, we have
\begin{equation*}
\begin{aligned}
(\Delta_{\mathcal{R}}^k)^{1-2 \theta} 
&\textstyle \geq(\Delta_{\mathcal{R}}^k)^{1-2 \theta}-(\Delta_{\mathcal{R}}^{K_1})^{1-2 \theta}
= \sum_{j=1}^{k-K_1}\left((\Delta_{\mathcal{R}}^{K_1+j})^{1-2 \theta}-(\Delta_{\mathcal{R}}^{K_1+j-1})^{1-2\theta}\right) \\
&\textstyle \geq (k-K_1) a_2 \geq \frac{a_2}{2} k,
\end{aligned}
\end{equation*}
where the last inequality holds whenever $k \geq 2K_1$. Finally, using this relation, \eqref{Fxk-zeta} and the fact that $\{\Delta_{\mathcal{R}}^k\}$ is non-increasing, we see that, for all $k \geq 2K_1+2$,
\begin{equation*}
\begin{aligned}
&\max\left\{|\mathcal{F}_\lambda(X^k,\,Y^k)-\zeta|,\mathcal{R}_k-\zeta\right\}\\
\leq& \max\{d_1,1\} \Delta_{\mathcal{R}}^{k-1}
\leq \max\{d_1,1\}(2/a_2)^{\frac{1}{2\theta-1}}(k-1)^{-\frac{1}{2\theta-1}}
\\
\leq& \max\{d_1,1\}(2/a_2)^{\frac{1}{2\theta-1}}[k/(k-1)]^{\frac{1}{2\theta-1}}\cdot k^{-\frac{1}{2 \theta-1}}
\leq \max\{d_1,1\}(4/a_2)^{\frac{1}{2\theta-1}}k^{-\frac{1}{2\theta-1}}=c_3k^{-\frac{1}{2 \theta-1}},
\end{aligned}
\end{equation*}
where $c_3:=\max\{d_1,1\}(4/a_2)^{\frac{1}{2\theta-1}}$, and the last inequality follows from $\frac{k}{k-1}\leq2$ and $\frac{1}{2\theta-1}\geq0$. This proves statement (iii).    
\end{proof}

\subsection{Proof of Theorem \ref{thm:iter_rate}}\label{proof:thm:iter_rate}

\begin{proof}
First, recall the notation used in the previous analysis that
\begin{equation*}
\begin{aligned}
M&:=\left \lceil\textstyle\frac{1+\sqrt{1-p_{\min}}}{1-\sqrt{1-p_{\min}}}  \right \rceil ^2, \quad\ell(k):=k+M-1, \quad\Xi_{k}:=\sqrt{\mathcal{R}_{k}-\mathcal{R}_{k+1}},\\
\Delta_{i,j}^{\varphi}&:=\varphi(\mathcal{R}_i-\zeta)-\varphi(\mathcal{R}_j-\zeta), \quad\pi:= \sqrt{cp_{\min}}/2,
\quad \Delta^k_{\mathcal{R}}:=\mathcal{R}_k-\zeta, 
\end{aligned}
\end{equation*}
where $\lceil a \rceil$ denotes the smallest integer greater than or equal to $a$. From Theorem \ref{thm:fullconverge} and the proof proceeding it, we have that the whole sequence $\{(X^k, \,Y^k)\}$ is convergent (let $(\widetilde{X},\,\widetilde{Y})$ be the limit point),
\begin{equation}\label{xk-xkinequal1}
\textstyle\|X^{k+1}-X^{k}\|_F+\|Y^{k+1}-Y^{k}\|_F\leq\frac{\Xi_k}{\pi},
\end{equation}
\begin{equation}\label{M-relation-App}
\big(1-\sqrt{1-p_{\min}}\big)\sqrt{M}
- \big(\textstyle\frac{1}{2}+\sqrt{1-p_{\min}}\big)
\geq \textstyle\frac{1}{2},
\end{equation}
and there exists a sufficiently large integer $K_1$ such that
\begin{equation}\label{ineq-sum-Xi}
\textstyle\frac{1-\sqrt{1-p_{\min }}}{\sqrt{M}}\sum_{i=k}^{\ell(k)}\Xi_i
\leq \left(\frac{1}{2}+\sqrt{1-p_{\min}}\right)
\Xi_{k-1}
+ \frac{d}{2\pi}\Delta^{\varphi}_{k,k+M},
\quad \forall\,k\geq K_1.
\end{equation}
Moreover, since $\{\mathcal{R}_k\}$ converges non-increasingly to $\zeta$ (by Proposition \ref{prop:Omegak-descentbehavior}(ii)) and Theorem \ref{thm:fun_rate}(i)\&(ii)\&(iii) hold for all sufficiently large $k$, there exists an integer $K_2$ such that $\zeta\leq\mathcal{R}_k\leq\zeta+1$ (i.e., $0\leq\Delta_\mathcal{R}^k\leq1$) and Theorem \ref{thm:fun_rate}(i)\&(ii)\&(iii) hold whenever $k\geq K_2$.

Next, we claim that there exist $t_1>0$ and $t_2>0$ such that the following inequality holds: 
\begin{equation}\label{xk-xtild}
\|X^k-\widetilde{X}\|_F+\|Y^k-\widetilde{Y}\|_F\leq t_1(\Delta_\mathcal{R}^{k-1})^{\frac{1}{2}}
+t_2(\Delta_\mathcal{R}^{k-1})^{1-\theta},
\quad \forall\,k\geq K_1.
\end{equation}
Indeed, for any $k \geq K_1$, we see that
\begin{equation}\label{ineqadd-tildek}
\begin{aligned}
&\textstyle \quad \big(1-\sqrt{1-p_{\min}})\sqrt{M} \sum_{i=\ell(k)}^{\tilde{k}}\Xi_i
\leq \frac{1-\sqrt{1-p_{\min}}}{\sqrt{M}}
\sum_{i=k}^{\tilde{k}}\sum_{t=i}^{\ell(i)}\Xi_t  \\
&\textstyle \leq \left(\frac{1}{2}+\sqrt{1-p_{\min}}\right) \sum_{i=k}^{\tilde{k}}\Xi_{i-1}
+ \frac{d}{2\pi}\sum_{i=k}^{\tilde{k}}
\Delta^{\varphi}_{i,i+M}  \\
&\textstyle\leq \left(\frac{1}{2}+\sqrt{1-p_{\min}}\right)\sum_{i=k-1}^{\tilde{k}-1}\Xi_i
+ \frac{d\tilde{a}}{2\pi}\sum_{i=k}^{\ell(k)} \big(\mathcal{R}_i-\zeta\big)^{1-\theta}\\
&\textstyle =\left(\frac{1}{2}+\sqrt{1-p_{\min}}\right) \left(\sum_{i=k-1}^{\ell(k)-1}\Xi_i+\sum_{i=\ell(k)}^{\tilde{k}}\Xi_i\right)
+ \frac{d\tilde{a}}{2\pi}\sum_{i=k}^{\ell(k)} \big(\mathcal{R}_i-\zeta\big)^{1-\theta}
\end{aligned}
\end{equation}
holds for all $\tilde{k}\geq\ell(k)$,
where the first inequality follows from the nonnegativity of $\Xi_k$ and the fact that the term $\Xi_i$ with $i\in\{\ell(k), \cdots, \tilde{k}\}$ occurs $M$ times in the double sum; the second inequality is obtained by applying \eqref{ineq-sum-Xi} to each $k\in\{k,k+1,\cdots,\tilde{k}\}$; the third inequality holds because
\begin{equation*}
\begin{aligned}
&\textstyle\sum_{i=k}^{\tilde{k}}\Delta^{\varphi}_{i,i+M}
\textstyle=\sum_{i=k}^{\tilde{k}}\varphi(\mathcal{R}_i-\zeta)-\sum_{i=k}^{\tilde{k}}\varphi(\mathcal{R}_{i+M}-\zeta)
=\sum_{i=k}^{\tilde{k}}\varphi(\mathcal{R}_i-\zeta)-\sum_{i=k+M}^{\tilde{k}+M}\varphi(\mathcal{R}_i-\zeta) \\
&~~\textstyle=\sum_{i=k}^{{k}+M-1}\varphi(\mathcal{R}_i-\zeta)-\sum_{i=\tilde{k}+1}^{\tilde{k}+M}\varphi(\mathcal{R}_i-\zeta)
\leq\sum_{i=k}^{\ell({k})}\varphi(\mathcal{R}_i-\zeta)=\sum_{i=k}^{\ell({k})}\tilde{a}(\mathcal{R}_i-\zeta)^{1-\theta},
\end{aligned}
\end{equation*}
where the last equality follows from $\varphi(s)=\tilde{a}s^{1-\theta}$.
Using \eqref{ineqadd-tildek}, together with \eqref{M-relation-App}, we obtain
\begin{equation*}
\textstyle\sum_{i=\ell(k)}^{\tilde{k}}\Xi_i
\leq \big(1+2\sqrt{1-p_{\min}}\big)\sum_{i=k-1}^{\ell(k)-1}\Xi_i
+ \frac{d\tilde{a}}{\pi}\sum_{i=k}^{\ell({k})}(\mathcal{R}_i-\zeta)^{1-\theta},
\end{equation*}
which further implies that
\begin{equation}\label{Ei-k-tildek}
\textstyle\sum_{i=k}^{\tilde{k}}\Xi_i= \sum_{i=k}^{\ell(k)-1}\Xi_i
+ \sum_{i=\ell(k)}^{\tilde{k}}\Xi_i
\leq \big(2+2\sqrt{1-p_{\min}}\big) \sum_{i=k-1}^{\ell(k)-1}\Xi_i
+ \frac{d\tilde{a}}{\pi}\sum_{i=k}^{\ell(k)} \big(\mathcal{R}_i-\zeta\big)^{1-\theta}.
\end{equation}
Moreover, it follows from  $\ell(k)=k+M-1$ and the fact that $\{\mathcal{R}_k\}$ converges non-increasingly to $\zeta$ (by Proposition \ref{prop:Omegak-descentbehavior}(ii)) that
\begin{equation*}
\textstyle\sum_{i=k-1}^{\ell(k)-1}\Xi_i=\sum_{i=k-1}^{\ell(k)-1}\sqrt{\mathcal{R}_i-\mathcal{R}_{i+1}}\leq\sum_{i=k-1}^{\ell(k)-1}\sqrt{\mathcal{R}_i-\zeta}\leq M \sqrt{\mathcal{R}_{k-1}-\zeta}
=M (\Delta_\mathcal{R}^{k-1})^{\frac{1}{2}}
\end{equation*}
and
\begin{equation*}
\textstyle\sum_{i=k}^{\ell({k})}(\mathcal{R}_i-\zeta)^{1-\theta}\leq M(\mathcal{R}_k-\zeta)^{1-\theta}\leq M(\mathcal{R}_{k-1}-\zeta)^{1-\theta}= M(\Delta_{\mathcal{R}}^{k-1})^{1-\theta}.
\end{equation*}
Using the two above relations and passing to the limit $\tilde{k}\to\infty$ in \eqref{Ei-k-tildek}, we further have
\begin{equation*}
\begin{aligned}
\textstyle\sum_{i=k}^{\infty}\Xi_i
&\textstyle\leq \big(2+2\sqrt{1-p_{\min}}\big)\sum_{i=k-1}^{\ell(k)-1}\Xi_i
+ \frac{d\tilde{a}}{\pi}\sum_{i=k}^{\ell(k)} \big(\mathcal{R}_i-\zeta\big)^{1-\theta}\\
&\textstyle\leq\big(2+2\sqrt{1-p_{\min}}\big)M(\Delta_\mathcal{R}^{k-1})^{\frac{1}{2}}
+ \frac{d\tilde{a}M}{\pi} (\Delta_\mathcal{R}^{k-1})^{1-\theta}.
\end{aligned}
\end{equation*}
This, together with the triangle inequality and \eqref{xk-xkinequal1}, yields
\begin{equation*}
\begin{aligned}
\|X^k-\widetilde{X}\|_F+\|Y^k-\widetilde{Y}\|_F 
&\textstyle\leq\sum_{i=k}^{\infty}\left(\|X^{i+1}-X^i\|_F+\|Y^{i+1}-Y^i\|_F\right)
\leq\frac{1}{\pi}\sum_{i=k}^{\infty}\Xi_i \\
&\textstyle\leq\frac{\big(2+2\sqrt{1-p_{\min}}\big)M}{\pi}(\Delta_\mathcal{R}^{k-1})^{\frac{1}{2}}
+ \frac{d\tilde{a}M}{\pi^2} (\Delta_\mathcal{R}^{k-1})^{1-\theta}.
\end{aligned}
\end{equation*}
This proves \eqref{xk-xtild} with $t_1:=\pi^{-1}\big(2+2\sqrt{1-p_{\min}}\big)M$ and $t_2:=\pi^{-2}d\tilde{a}M$.

With the above inequality in hand, we are now ready to prove our final results.

\textit{Statement (i)}. Suppose that $\theta\in[0,\frac{1}{2}]$. In this case, for any $k\geq\max\{K_1,K_2\}+1$, we have that $1-\theta\geq\frac{1}{2}$, $\Delta_{\mathcal{R}}^{k-1}\leq1$, Theorem \ref{thm:fun_rate}(i)\&(ii) and the inequality \eqref{xk-xtild} hold. Then,
\begin{equation*}
\begin{aligned}
\|X^k-\widetilde{X}\|_F+\|Y^k-\widetilde{Y}\|_F
\leq t_1(\Delta_\mathcal{R}^{k-1})^{\frac{1}{2}}
+t_2(\Delta_\mathcal{R}^{k-1})^{1-\theta}
\leq (t_1+t_2)(\Delta_\mathcal{R}^{k-1})^{\frac{1}{2}}.
\end{aligned}
\end{equation*}
Combining this with Theorem \ref{thm:fun_rate}(i)\&(ii), we obtain
\begin{equation*}
\|X^k-\widetilde{X}\|_F+\|Y^k-\widetilde{Y}\|_F
\leq(t_1+t_2)\max\{\sqrt{c_1},\sqrt{c_2}\}{\max\{\sqrt{\eta_1},\sqrt{\eta_2}\}}^{k-1}=d_1\varrho^k,
\end{equation*}
where $d_1:=(t_1+t_2)\max\{\sqrt{c_1},\sqrt{c_2}\}{\max\{\sqrt{\eta_1},\sqrt{\eta_2}\}}^{-1}>0$ and $\varrho:=\max\{\sqrt{\eta_1},\sqrt{\eta_2}\}\in(0,1)$.
This proves statement (i).

\textit{Statement (ii)}. Suppose that $\theta\in(\frac{1}{2},1)$. In this case, for any $k\geq\max\{K_1,K_2\}+2$, we have that $1-\theta<\frac{1}{2}$, $\Delta_{\mathcal{R}}^{k-1}\leq1$, Theorem \ref{thm:fun_rate}(iii) and the inequality \eqref{xk-xtild} hold. Then, 
\begin{equation*}
\|X^k-\widetilde{X}\|_F+\|Y^k-\widetilde{Y}\|_F
\leq t_1(\Delta_\mathcal{R}^{k-1})^{\frac{1}{2}}
+ t_2 (\Delta_\mathcal{R}^{k-1})^{1-\theta}\leq (t_1+t_2)(\Delta_\mathcal{R}^{k-1})^{1-\theta}.
\end{equation*}
Combining this with Theorem \ref{thm:fun_rate}(iii), we obtain
\begin{equation*}
\begin{aligned}
\|X^k-\widetilde{X}\|_F+\|Y^k-\widetilde{Y}\|_F
&\leq(t_1+t_2)c_3^{1-\theta}{(k-1)}^{-\frac{1-\theta}{2\theta-1}}\leq(t_1+t_2)c_3^{1-\theta}{[k/(k-1)]}^{\frac{1-\theta}{2\theta-1}}\cdot{k}^{-\frac{1-\theta}{2\theta-1}}\\
&\leq 2^{\frac{1-\theta}{2\theta-1}}(t_1+t_2)c_3^{1-\theta}{k}^{-\frac{1-\theta}{2\theta-1}}= d_2{k}^{-\frac{1-\theta}{2\theta-1}},
\end{aligned}
\end{equation*}
where $d_2:=2^{\frac{1-\theta}{2\theta-1}}(t_1+t_2)c_3^{1-\theta}>0$, and the last inequality follows from $\frac{k}{k-1}\leq2$ and $\frac{1-\theta}{2\theta-1}>0$. This proves statement (ii).    
\end{proof}

\bibliographystyle{plain}
\bibliography{Ref_GSymMF}

\end{document}